\documentclass{article}

\usepackage{amsmath,amsfonts,amssymb,url,amsthm,mathtools,tikz}

\usepackage[utf8]{inputenc}

%\newcommand{\makeascript}{\marginpar{\textbf{\qquad !!!!!}}}

%\newcommand{\todo}[1]{\footnote{[[#1]]}\marginpar{[[\thefootnote]]}}

%{\theorembodyfont{\slshape}
\newtheorem{proposition}{Proposition}[section]
\newtheorem{lemma}[proposition]{Lemma}
\newtheorem{corollary}[proposition]{Corollary}

\newtheorem{theorem}[proposition]{Theorem}

%{\theorembodyfont{\upshape}
\newtheorem{remark}[proposition]{Remark}

\newtheorem{example}[proposition]{Example}

\newcommand{\eps}{\varepsilon}

\newcommand\C{{\mathbb C}}

\newcommand\Q{{\mathbb Q}}
\newcommand\R{{\mathbb R}}
\newcommand\Z{{\mathbb Z}}

\newcommand{\calD}{{\mathcal{D}}}
\newcommand{\calF}{{\mathcal{F}}}

\newcommand{\calH}{\mathcal{H}}

\newcommand{\OO}{{\mathcal{O}}}

\renewcommand{\Im}{{\operatorname{Im}}}
\newcommand{\Gal}{{\operatorname{Gal}}}

\renewcommand{\Re}{{\operatorname{Re}}}

\newcommand{\ord}\nu

%\newenvironment{proof}{\paragraph{Proof}}{}

%\numberwithin{equation}{section}

%\title{Primitive elements of number fields generated by singular moduli}

\title{Separating singular moduli and the primitive element problem}

\author{Yuri BILU, Bernadette FAYE, Huilin ZHU\footnote{corresponding author}}

\setcounter{tocdepth}1

\makeatletter

\renewcommand*\l@section[2]{%
  \ifnum \c@tocdepth >\z@
    \addpenalty\@secpenalty
    \addvspace{0.2em \@plus\p@}%
    \setlength\@tempdima{1.5em}%
    \begingroup
      \parindent \z@ \rightskip \@pnumwidth
      \parfillskip -\@pnumwidth
      \leavevmode \bfseries
      \advance\leftskip\@tempdima
      \hskip -\leftskip
      #1\nobreak\hfil \nobreak\hb@xt@\@pnumwidth{\hss #2}\par
    \endgroup
  \fi}

\makeatother

\numberwithin{equation}{section}

\begin{document}

\hbadness 300

\hfuzz 5pt

\maketitle

\begin{abstract}
We prove that ${|x-y|\ge 800X^{-4}}$, where~$x$ and~$y$ are distinct singular moduli of discriminants not exceeding~$X$.  We apply this result to the ``primitive element problem'' for two singular moduli.
In a previous article Faye and Riffaut show that the number field  $\Q(x,y)$, generated by two distinct singular moduli~$x$ and~$y$, is generated by ${x-y}$ and, with some exceptions, by ${x+y}$ as well. In this article we fix a rational number
${\alpha \ne0,\pm1}$  and show that the field  $\Q(x,y)$ is generated by ${x+\alpha y}$, with a few exceptions occurring when~$x$ and~$y$ generate the same quadratic field over~$\Q$.   Together with the above-mentioned result of Faye and Riffaut, this generalizes a theorem due to Allombert et al. (2015) about solutions of linear equations in singular moduli.
\end{abstract}

%%%%%%%%%%%%%%%%%%%%%%%%%%%%%%%%%%%%%%%%%%%%%
{\footnotesize

\tableofcontents

}

\section{Introduction}
A \textsl{singular modulus} is the $j$-invariant of an elliptic curve with complex multiplication. Given a singular modulus~$x$, we denote by $\Delta_x$ the discriminant of the associated imaginary quadratic order and  by $h(\Delta)$ the class number of the imaginary quadratic order of discriminant~$\Delta$. Recall that two singular moduli~$x$ and~$y$ are conjugate over~$\Q$ if and only if ${\Delta_x=\Delta_y}$, and that all singular moduli of  given discriminant~$\Delta$ form a full Galois orbit over~$\Q$. In particular, ${[\Q(x):\Q]=h(\Delta_x)}$. For all details, see, for instance, \cite[\S 7 and \S 11]{Co13}.

Lower estimates for a non-zero
singular modulus play a crucial role in some recent works on Diophantine properties
of singular moduli. For example, in \cite{BMZ13,Ha15,BLP16,BHK18} the authors obtain and use some lower
bounds of the shape ${|x|\gg |\Delta_x|^{-c}}$ with some absolute explicit constant ${c>0}$. 

In this article we obtain a totally explicit lower bound for the difference ${|x-y|}$, where~$x$ and~$y$ are distinct singular moduli. Since~$0$ is a singular modulus, this generalizes the previous lower bounds for $|x|$.

\begin{theorem}
\label{thseparweak}
Let~$x$ and~$y$ be two distinct singular moduli. Then,
$$
%|x-y|\ge \min\bigl\{5\cdot10^{-7},800X^{-4}\bigr\},
|x-y|\ge 800\max\{|\Delta_x|,|\Delta_y|\}^{-4}.
$$
%where ${X=\max\{|\Delta_x|,|\Delta_y|\}}$.
\end{theorem}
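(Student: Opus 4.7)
The approach is to lift everything to the upper half-plane. For each singular modulus one picks its preimage in the standard fundamental domain $\calF$, so that $x = j(\tau_x)$ and $y = j(\tau_y)$, where $\tau_\bullet = (-b_\bullet + i\sqrt{|\Delta_\bullet|})/(2a_\bullet)$ comes from the unique reduced form $[a_\bullet,b_\bullet,c_\bullet]$ of discriminant $\Delta_\bullet$. Setting $\Delta = \max(|\Delta_x|,|\Delta_y|)$, the argument splits into two ingredients: an arithmetic lower bound for $|\tau_x - \tau_y|$ in terms of $\Delta$, and an analytic conversion into a lower bound for $|j(\tau_x) - j(\tau_y)|$.

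For the arithmetic step, $\Re(\tau_x - \tau_y) = (b_y a_x - b_x a_y)/(2 a_x a_y)$. If this does not vanish, the reduction inequality $a_\bullet \le \sqrt{|\Delta_\bullet|/3}$ gives absolute value at least $3/(2\Delta)$. If the real parts coincide, one examines $\Im(\tau_x - \tau_y)$: multiplying its numerator by its conjugate produces the nonzero integer $|\Delta_x|a_y^2 - |\Delta_y|a_x^2$, yielding $|\Im(\tau_x - \tau_y)| \ge c/\Delta^2$. In all cases $|\tau_x - \tau_y| \ge c/\Delta^2$ with an explicit constant.

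For the analytic step I would split on the imaginary parts. If both $\Im\tau_x, \Im\tau_y$ are large (above a threshold $T_0$), the $q$-expansion $j(\tau) = q^{-1} + 744 + O(q)$ reduces the estimate to $|q_x^{-1} - q_y^{-1}|$, directly controlled by the separation of $\tau_x,\tau_y$ in $\calF$. If one height is large and the other small, $|x|$ and $|y|$ differ by an exponential factor and $|x-y|$ is huge. If both are bounded, $\tau_x,\tau_y$ lie in a compact subregion of $\calF$ on which $j$ is bi-Lipschitz, except near the critical points $\rho = e^{2\pi i/3}$ (a triple zero of $j$) and $i$ (a double zero of $j-1728$). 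Away from these points a mean-value argument gives $|x-y| \ge c'|\tau_x - \tau_y| \ge c''/\Delta^2$, far better than required. Near $\rho$, using $j(\tau) = C_0(\tau - \rho)^3 + O((\tau-\rho)^4)$ one has
\[
j(\tau_x) - j(\tau_y) \approx C_0 (\tau_x - \tau_y)\bigl((\tau_x - \rho)^2 + (\tau_x - \rho)(\tau_y - \rho) + (\tau_y - \rho)^2\bigr).
\]
The arithmetic step, applied to the pair $(\tau_\bullet,\rho)$ — since $\rho$ is itself a CM point of discriminant $-3$ — also gives $|\tau_\bullet - \rho| \ge c/\Delta$. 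Combined with the $\Delta^{-2}$ contribution from $|\tau_x - \tau_y|$, the quadratic factor costs at most another $\Delta^{-2}$, which is precisely where the exponent $-4$ comes from. The parallel analysis at $i$ is similar but milder, since the vanishing order is only $2$.

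The main obstacle I expect is the sub-case in which both $\tau_x$ and $\tau_y$ cluster near $\rho$, because the quadratic factor above can a priori degenerate if $(\tau_x - \rho)/(\tau_y - \rho)$ is close to a primitive cube root of unity. Excluding this requires using the arithmetic of the forms $[a_\bullet,b_\bullet,c_\bullet]$ to prevent such near-alignment, and an analogous (but easier) safeguard is needed at $i$ against the ratio being close to $-1$. Once these degenerate configurations are under control, what remains is the bookkeeping of explicit constants — the tail bound in the $q$-expansion of $j$, a Lipschitz constant on the chosen compact subregion of $\calF$, and the leading Taylor coefficients of $j$ at $\rho$ and $i$ — to land the specific value $800$ in the final inequality.
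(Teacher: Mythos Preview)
Your global architecture---arithmetic separation of CM points in~$\calF$ combined with an analytic conversion to $j$-values, with special care near the elliptic points---is exactly the paper's. The arithmetic step you sketch is essentially Proposition~\ref{pseparquad}. The difference lies in the analytic conversion.

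The paper does \emph{not} use the Taylor factorization ${u^3-v^3=(u-v)(u-\omega v)(u-\bar\omega v)}$. Instead, Proposition~\ref{pseparjvalues} applies an explicit Rouch\'e/inverse-function argument (Lemma~\ref{lrouche}): for~$\tau_y$ of bounded height, either $|x-y|\ge C|j'(\tau_y)|^2$, or there is some preimage $\tau_x'$ of~$x$ (not necessarily in~$\calF$, merely in its $10^{-5}$-neighbourhood) with $|x-y|\ge C|j'(\tau_y)||\tau_x'-\tau_y|$. Then $|j'(\tau_y)|\ge C'|\Delta_y|^{-2}$ (Corollary~\ref{cderiv}, using $|\tau_y-\zeta_6|,|\tau_y-i|\gg|\Delta_y|^{-1}$) gives $|j'(\tau_y)|^2\ge C''|\Delta_y|^{-4}$, and this is where the exponent~$-4$ comes from in the paper.

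The obstacle you flag---the quadratic factor degenerating when $(\tau_x-\rho)/(\tau_y-\rho)$ is near a primitive cube root of unity---is real, but your proposed fix (``arithmetic of the forms to prevent such near-alignment'') misidentifies the mechanism. The point is that rotation by $e^{\pm2\pi i/3}$ around~$\zeta_6$ is, to first order, the action of the order-$3$ stabiliser of~$\zeta_6$ in $\mathrm{PSL}_2(\Z)$; hence ${\rho+\omega(\tau_y-\rho)}$ is (up to a lower-order correction) another CM preimage of~$y$, of the same discriminant~$\Delta_y$, and your own arithmetic step then bounds $|\tau_x-\omega(\tau_y-\rho)-\rho|$ from below. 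Equivalently, the angle of~$\calF$ at~$\zeta_6$ is $\pi/3$, so for $\tau_x,\tau_y\in\calF$ both near~$\zeta_6$ one has $\arg\bigl((\tau_x-\zeta_6)/(\tau_y-\zeta_6)\bigr)\in[-\pi/3,\pi/3]$, which forces $|u^2+uv+v^2|\ge\tfrac12(|u|^2+|v|^2)$. The Rouch\'e device in the paper sidesteps all of this: the preimage $\tau_x'$ it produces \emph{is} one of these rotated points, so one never sees the factorisation.

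A second, smaller gap: your ``bi-Lipschitz on a compact subregion of~$\calF$'' is false as stated near the boundary identifications ($\Re z=\pm\tfrac12$ and the arc $|z|=1$), for the same reason---two points of~$\calF$ can be far apart while their $j$-values are close. The paper handles this by allowing $\tau_x'\in\{\tau_x,\tau_x\pm1\}$ in the large-height case (Proposition~\ref{pbigimwcorrected}) and via the Rouch\'e preimage in the bounded-height case.
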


In fact, we obtain a more precise statement, see Theorem~\ref{thsepar}.

We apply Theorem~\ref{thseparweak} to the ``primitive element problem'' for singular moduli. It is well known that, given a field~$k$  of characteristic~$0$ and~$x,y$ algebraic over~$k$,  the field $k(x,y)$ has a generator (called sometimes ``primitive element'') of the form ${x+\alpha y}$, where ${\alpha\in k}$.  Moreover, any non-zero~$\alpha$ would work with finitely many exceptions, and often this set of exceptions is empty.

We consider the case ${k=\Q}$ and $x,y$ singular moduli, and we study the question ``does ${x+\alpha y}$ generate $\Q(x,y)$ for every ${\alpha\in \Q^\times}$?''. To motivate this question, we recall that, starting from the ground-breaking article of André~\cite{An98}, equations involving singular moduli were studied by many authors, see~\cite{ABP15,BLP16,Ri19} for a historical account and further references. In particular, Kühne~\cite{Ku13} proved that the equation ${x+y=1}$ has no solutions in singular moduli~$x$ and~$y$.
This was generalized in~\cite{ABP15}, where  solutions in singular moduli of a general linear equation with rational coefficients are classified.

\begin{theorem} \textrm{\cite[Theorem~1.2]{ABP15}}
\label{thspinq}
Let $A,B,C$ be rational numbers such that ${AB\ne 0}$.  % and $(A,B,C)$ is not proportional to $(1,-1,0)$.
Let~$x$ and~$y$ be singular moduli such that ${Ax+By=C}$. Then either ${A+B=C=0}$ and ${x=y}$, or the field ${\Q(x)=\Q(y)}$ is of degree at most~$2$ over~$\Q$.
\end{theorem}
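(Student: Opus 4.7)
The plan is to exploit the Galois action on the pair $(x,y)$ together with the sharp asymptotic size estimates for singular moduli. First, the relation $Ax+By=C$ with $AB\ne 0$ gives $y\in\Q(x)$ and, symmetrically, $x\in\Q(y)$, so $\Q(x)=\Q(y)$ and in particular $h(\Delta_x)=h(\Delta_y)=:h$. The $\Q$-rational affine map $f(t)=(C-At)/B$ then satisfies $y^\sigma=f(x^\sigma)$ for every $\sigma\in\Gal(\overline{\Q}/\Q)$, so $f$ restricts to a bijection from the Galois orbit of $x$ onto the Galois orbit of $y$.

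The central input is the standard fact that, for a discriminant $\Delta$, the set of singular moduli of discriminant $\Delta$ contains a unique \emph{dominant} element $x_0$ of absolute value $\sim e^{\pi\sqrt{|\Delta|}}$, with every other conjugate bounded by roughly $e^{\pi\sqrt{|\Delta|}/2}$. Let $x_0$, $y_0$ denote the dominant conjugates of $x$, $y$. Once $|\Delta_x|$ exceeds an explicit threshold, $|f(x_0)|\sim|A/B|\,e^{\pi\sqrt{|\Delta_x|}}$ is too large to coincide with any non-dominant conjugate of $y$, so $f(x_0)=y_0$. Comparing the two sides of $Ax_0+By_0=C$ and using the unbounded growth of the dominant singular modulus then pins down $|\Delta_x|=|\Delta_y|$, whence $\Delta_x=\Delta_y$.

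Once $\Delta_x=\Delta_y$, the map $f$ permutes a single Galois orbit and the same magnitude argument forces $f(x_0)=x_0$, i.e.\ $(A+B)x_0=C$. If $h\ge 2$ then $x_0$ is irrational, so $A+B=0$ and $C=0$; the original equation then collapses to $A(x-y)=0$ and, since $A\ne 0$, we recover $x=y$, the first alternative of the theorem. If $h\le 1$ then $\Q(x)=\Q$ has degree $1$, placing us in the second alternative.

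The remaining task is to dispose of the finitely many small discriminants for which the asymptotic argument breaks down. I would make the thresholds on $|\Delta_x|$ and $|\Delta_y|$ completely explicit, using for instance the expansion $x_0=q_0^{-1}+744+O(|q_0|)$ at the dominant CM point, enumerate the short list of discriminants $\Delta$ with $h(\Delta)\ge 3$ below the thresholds, and verify the conclusion on each remaining pair $(\Delta_x,\Delta_y)$ by direct inspection of the numerical conjugates. Here the separation estimate of Theorem~\ref{thseparweak} can be brought in to certify that sufficiently accurate numerical approximations distinguish the finitely many conjugates involved. I expect the main obstacle to be precisely this quantitative bookkeeping: cutting the list of candidate discriminants down to a genuinely short and explicitly checkable one, and ruling out accidental small-discriminant solutions with $\Delta_x\ne\Delta_y$ but $\Q(x)=\Q(y)$ of degree at least three.
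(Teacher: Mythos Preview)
First, note that the paper does not give its own proof of this statement: Theorem~\ref{thspinq} is quoted from~\cite{ABP15}, and the present paper instead proves the stronger Theorem~\ref{thalpha}, which together with Theorem~\ref{thsumdiff} recovers it. So the relevant comparison is with the argument of Section~\ref{sprim}.

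Your overall strategy---exploit the Galois-equivariant affine map $f$ and the size hierarchy created by the dominant singular modulus---is exactly the engine used there, but there is a genuine gap in your outline. The claim that ``comparing the two sides of $Ax_0+By_0=C$ \ldots\ pins down $|\Delta_x|=|\Delta_y|$'' does not go through: a single relation $Ax_0+By_0=C$ places no constraint on the pair $(|\Delta_x|,|\Delta_y|)$ once $A,B,C$ are allowed to be arbitrary rationals, and every threshold you invoke (``once $|\Delta_x|$ exceeds an explicit threshold'') depends on $|A/B|$ and $|C/B|$. Hence your reduction to ``finitely many small discriminants'' has no uniform cutoff, and the final verification step cannot be carried out as described. (A related slip: the relevant case split is $h\le 2$ versus $h\ge 3$, since the theorem's second alternative already absorbs all degree-$2$ cases.)

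The repair, and this is precisely what Section~\ref{sprim} does, is twofold. First, eliminate $A,B,C$ by using that
\[
-\frac{A}{B}=\frac{y^{\sigma}-y^{\tau}}{x^{\sigma}-x^{\tau}}
\]
for \emph{every} pair $\sigma\ne\tau$: one choice with $x^{\sigma}$ dominant yields a lower bound for $|A/B|$ of order $e^{\pi|\Delta_x|^{1/2}/2}$, while another choice with both $x^{\sigma},x^{\tau}$ non-dominant (here one needs $h\ge 3$ and the count of dominant/subdominant conjugates from Proposition~\ref{pdom}), combined with the separation bound of Theorem~\ref{thseparweak}, yields an upper bound. Comparing the two produces an absolute bound on $|\Delta_x|$, independent of $A,B,C$. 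Second---and this is the structural input your outline is missing---size estimates alone never force $\Delta_x=\Delta_y$. One must invoke the class-field-theoretic classification of pairs of singular moduli with $\Q(x)=\Q(y)$ (Lemmas~\ref{ltwoconj} and~\ref{lsamefield} and \cite[Corollary~4.2]{ABP15}): either $\Delta_x=\Delta_y$, or $\{\Delta_x,\Delta_y\}=\{\Delta,4\Delta\}$ with $\Delta\equiv 1\pmod 8$, or $(\Delta_x,\Delta_y)$ lies in an explicit finite list of pairs with distinct fundamental discriminants. Section~\ref{sprim} treats these three cases separately.
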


Note that lists of all imaginary quadratic discriminants~$\Delta$ with ${h(\Delta)\le 2}$ are widely available, so Theorem~\ref{thspinq} is fully explicit.

One can re-state Theorem~\ref{thspinq} as follows.

\paragraph{Theorem~\ref{thspinq}${}'$.} \textit{Let~$\alpha$ be a non-zero rational number, and let $x,y$ be singular moduli such that ${x+\alpha y\in \Q}$. Then either ${\alpha=-1}$ and ${x=y}$ or the field ${\Q(x)=\Q(y)}$ is of degree at most~$2$ over~$\Q$.}

\bigskip

This raises the following natural question: what is the number field generated by ${x+\alpha y}$? This is clearly a subfield of $\Q(x,y)$, and one may wonder how smaller than $\Q(x,y)$ this field is. The problem is trivial when ${x=y}$, so we may assume that ${x\ne y}$.

In the special case when ${\alpha =\pm1}$ this question was addressed in~\cite{FR18}. It turns out that ${x-y}$ always generates $\Q(x,y)$, and ${x+y}$ generates a subfield of $\Q(x,y)$ of degree at most~$2$, which is often $\Q(x,y)$ itself.   To be precise, we have the following statement.

\begin{theorem} \textrm{\cite[Theorem 4.1]{FR18}}
\label{thsumdiff}
Let $x,y$ be two distinct singular moduli and let ${\alpha\in\{\pm1\}}$. Then ${\Q(x,y)=\Q(x+\alpha y)}$, unless ${\alpha=1}$ and ${\Delta_x=\Delta_y}$, in which case we have ${[\Q(x,y):\Q(x+y)]\le 2}$.
\end{theorem}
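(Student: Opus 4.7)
Set $K = \Q(x, y)$ and $F = \Q(x + \alpha y)$. Since $F \subseteq K$, proving $K = F$ (respectively $[K:F] \le 2$) amounts to controlling the stabilizer in $\Gal(\bar\Q/\Q)$ of $x + \alpha y$ acting on the pair $(x,y)$. Concretely, if $(x',y')$ is a Galois conjugate of the pair $(x,y)$, then $\Delta_{x'}=\Delta_x$ and $\Delta_{y'}=\Delta_y$, and my plan is to show that the map $(x',y') \mapsto x' + \alpha y'$ is injective (respectively at most $2$-to-$1$) on this orbit.

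The crux is to analyze the equation $x_1 + \alpha y_1 = x_2 + \alpha y_2$, or equivalently
$$
x_1 - x_2 = \alpha(y_2 - y_1),
$$
where $x_1, x_2$ are conjugate singular moduli of discriminant $\Delta_x$ and $y_1, y_2$ are conjugate of discriminant $\Delta_y$. First, if $\Delta_x \ne \Delta_y$: assuming a nontrivial coincidence with $x_1 \ne x_2$, Theorem~\ref{thseparweak} yields $|x_1-x_2|\ge 800|\Delta_x|^{-4}$ and, from $y_1\ne y_2$, $|y_1-y_2|\ge 800|\Delta_y|^{-4}$. I would combine these separation bounds with the classical size profile of a Galois orbit of singular moduli, in which exactly one modulus is \emph{dominant}, with absolute value of order $e^{\pi\sqrt{|\Delta|}}$, while all the others are $O(1)$. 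Matching magnitudes on the two sides forces the dominant modulus to appear on the left if and only if it does on the right, which is incompatible with $\Delta_x \ne \Delta_y$ except when both sides vanish, contradicting our assumption.

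In the remaining case $\Delta_x = \Delta_y = \Delta$, the four singular moduli $x_1, x_2, y_1, y_2$ all lie in a single Galois orbit, and our equation is a $\Q$-linear relation inside that orbit. For $\alpha = -1$, I would show, again using the size dichotomy together with Theorem~\ref{thseparweak}, that the only solutions are the trivial ones $x_1 = x_2$ and $y_1 = y_2$, giving $K = F$. For $\alpha = +1$, an additional \emph{swap} solution $x_2 = y_1,\ y_2 = x_1$ appears: this is a genuine Galois coincidence forced by the symmetry of $x+y$ in its two summands, and it is exactly what produces the factor-$2$ collapse $[\Q(x,y):\Q(x+y)] \le 2$.

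The main obstacle is making the size-dichotomy argument fully quantitative. Theorem~\ref{thseparweak} provides only a polynomial lower bound in $|\Delta|^{-1}$, which does not by itself contradict the $O(1)$ upper bound on sub-dominant singular moduli. The real work is to track carefully, in each sub-configuration of which of $x_1, x_2, y_1, y_2$ is dominant in its orbit, how a would-be spurious coincidence forces an exponential-versus-$O(1)$ size mismatch in our equation; the residual small-discriminant cases, where the dichotomy is not yet sharp, would require a finite verification.
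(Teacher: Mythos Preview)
This theorem is not proved in the present paper: it is quoted from \cite{FR18} and used as a black box in Section~\ref{sprim}. So there is no ``paper's own proof'' to compare against here; your proposal is an attempt at an independent argument.

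That said, your outline has two concrete problems.

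First, a factual error in the size profile: it is not true that the non-dominant singular moduli of discriminant~$\Delta$ are $O(1)$. By~\eqref{eifnotdom} they are only bounded by $e^{\pi|\Delta|^{1/2}/2}+2079$, and the subdominant ones genuinely have this size. This already invalidates the heuristic ``exponential versus $O(1)$'' mismatch you invoke.

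Second, and more seriously, in the case $\Delta_x\ne\Delta_y$ your strategy does not close. After conjugating so that (say) $x_1$ is dominant, the left-hand side $|x_1-x_2|$ is of order $e^{\pi|\Delta_x|^{1/2}}$, which does force one of $y_1,y_2$ to be dominant. But then the right-hand side is of order $e^{\pi|\Delta_y|^{1/2}}$, and when $|\Delta_x|$ and $|\Delta_y|$ are close these two quantities are comparable: the ratio $e^{\pi(|\Delta_x|^{1/2}-|\Delta_y|^{1/2})}$ can be as small as $e^{\pi/(2|\Delta_x|^{1/2})}\to 1$. No contradiction follows from magnitudes alone, and Theorem~\ref{thseparweak} is far too weak to help here. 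The actual argument in \cite{FR18} uses substantially more than archimedean size estimates: it relies on the Galois-theoretic structure of ring class fields (cf.\ Lemmas~\ref{ltwoconj}--\ref{lsamefield} of this paper, which are themselves borrowed from that circle of ideas). Your proposal does not supply a replacement for this input.

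By contrast, your sketch for the case $\Delta_x=\Delta_y$ is essentially sound and parallels the mechanism of Section~\ref{sseqdis}: with $x$ dominant one forces $y^\sigma=x$, whence $\alpha=-1$ gives $x^\sigma=2x-y$, too large to be non-dominant, while $\alpha=+1$ gives the swap $x^\sigma=y$, accounting exactly for the index-$2$ drop.
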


In the present article we treat the case ${\alpha \ne \pm1}$. There is one obvious case, given by the following example, when ${x+\alpha y}$ does not generate $\Q(x,y)$.

\begin{example}
\label{exquad}
Let~$x$ and~$y$ generate the same number field of degree~$2$ over~$\Q$, and denote $x',y'$ their respective conjugates over~$\Q$. Set
\begin{equation}
\label{ealpha}
\alpha=-\frac{x-x'}{y-y'}.
\end{equation}
Then  ${\alpha \in \Q}$ and ${x+\alpha y\in \Q}$; hence ${x+\alpha y}$ cannot generate the quadratic field $\Q(x,y)$.

Note that, when  ${\Delta_x=\Delta_y}$, then ${\alpha=1}$, and we are in a special case of Theorem~\ref{thsumdiff}. On the other hand, if ${\Delta_x\ne \Delta_y}$ then ${\alpha \ne \pm1}$ by Theorem~\ref{thsumdiff}.
\end{example}

All cases of Example~\ref{exquad} can be easily listed using the available lists of imaginary quadratic discriminants of class number~$2$.

Our principal result tells that Example~\ref{exquad} lists all cases when ${x+\alpha y}$ is not a primitive element of $\Q(x,y)$.

\begin{theorem}
\label{thalpha}
Let ${\alpha \ne 0, \pm1}$ be a rational number and $x,y$ singular moduli. Then
either ${\Q(x+\alpha y)=\Q(x,y)}$,
or $x,y,\alpha$ are as in Example~\ref{exquad}, that is
\begin{enumerate}

\item
$x$ and~$y$ generate the same number field, which is of degree~$2$ over~$\Q$;
\item
$\Delta_x\ne \Delta_y$;

\item
$\alpha=-(x-x')/(y-y')$, where $x',y'$ are the conjugates of $x,y$  over~$\Q$.

\end{enumerate}
\end{theorem}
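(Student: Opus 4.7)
The plan is to combine a Galois-theoretic reduction with the separation estimate of Theorem~\ref{thseparweak} and the primitive element result of Faye--Riffaut (Theorem~\ref{thsumdiff}). First I will assume by contradiction that ${\Q(x+\alpha y)\subsetneq \Q(x,y)}$, so there exists ${\sigma\in \Gal(\bar\Q/\Q)}$ which fixes ${z:=x+\alpha y}$ but not the pair ${(x,y)}$. Writing ${x'=\sigma(x)}$ and ${y'=\sigma(y)}$, the equality ${x+\alpha y=x'+\alpha y'}$ reads
\[
x-x'=-\alpha(y-y'),
\]
and the hypothesis ${\alpha\ne 0}$ forces $x\ne x'$ and $y\ne y'$ simultaneously. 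In particular ${\alpha=-(x-x')/(y-y')\in\Q^\times}$, which is already formula~\eqref{ealpha} for these specific conjugates.

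Next I will apply Theorem~\ref{thsumdiff} (with parameter ${-1}$) to the pairs of distinct singular moduli ${(x,x')}$ and ${(y,y')}$, obtaining
\[
\Q(x,x')=\Q(x-x')=\Q(y-y')=\Q(y,y'),
\]
since ${x-x'}$ and ${y-y'}$ differ by a nonzero rational factor. Denote this common field by $F$; by construction it contains both $\Q(x)$ and $\Q(y)$, and it is contained in both ring class fields ${H_{\Delta_x}}$ and ${H_{\Delta_y}}$. Note that ${h(\Delta_x),h(\Delta_y)\ge 2}$, since otherwise one of ${x,y}$ would be rational and the theorem is immediate.

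The heart of the argument will be to show that ${[F:\Q]=2}$; this is where I expect the main obstacle. The strategy is to exploit the equality ${|x-x'|=|\alpha|\cdot|y-y'|}$. The separation estimate supplies the lower bounds ${|x-x'|\ge 800|\Delta_x|^{-4}}$ and ${|y-y'|\ge 800|\Delta_y|^{-4}}$, while the standard $q$-expansion of~$j$ yields matching upper bounds in terms of the reduced CM points representing $x,x',y,y'$. Coupling these with the rational equality of the two ratios, one bounds the discriminants ${(\Delta_x,\Delta_y)}$ from above and disposes of the infinite family. The remaining cases are finite and are treated by an explicit case analysis in the dihedral Galois groups of ${H_{\Delta_x}}$ and ${H_{\Delta_y}}$: the condition that ${(x-x')/(y-y')}$ be Galois-invariant is extremely restrictive as soon as ${h(\Delta_x)\ge 3}$ or ${h(\Delta_y)\ge 3}$, and I expect it to fail outside a short list of tabulated discriminants that can be checked by hand.

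Once ${[F:\Q]=2}$ is known, $F$ is a quadratic field equal to both ${\Q(x)}$ and ${\Q(y)}$, so ${h(\Delta_x)=h(\Delta_y)=2}$ and ${\Q(x)=\Q(y)}$. The case ${\Delta_x=\Delta_y}$ is then dispatched directly: either ${y=x}$, so ${z=(1+\alpha)x}$, or ${y=x'}$, so ${z=x+\alpha x'}$, and in both subcases the hypothesis ${\alpha\ne\pm 1}$ yields ${\Q(z)=\Q(x)=\Q(x,y)}$, contradicting the assumption. Hence ${\Delta_x\ne\Delta_y}$, and the relation ${\alpha=-(x-x')/(y-y')}$ matches~\eqref{ealpha} exactly, placing ${x,y,\alpha}$ in Example~\ref{exquad} as required.
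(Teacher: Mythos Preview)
Your framework---find a nontrivial $\sigma$ fixing $x+\alpha y$, derive ${x-x^\sigma=-\alpha(y-y^\sigma)}$, apply Theorem~\ref{thsumdiff} to get ${\Q(x,x^\sigma)=\Q(y,y^\sigma)}$, and then reduce to ${h\le 2}$---matches the paper, as does your endgame for ${h=2}$. The gap is in what you call ``the heart of the argument.'' A single equality ${|x-x'|=|\alpha|\,|y-y'|}$ cannot bound the discriminants: if~$x$ is dominant then ${|x-x'|\asymp e^{\pi|\Delta_x|^{1/2}}}$, while the separation lower bound on $|y-y'|$ is only polynomial in $|\Delta_y|^{-1}$, so all you learn is that $|\alpha|$ is large---no contradiction. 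The actual mechanism, which you mention only as a cleanup tool for the ``remaining finite cases,'' is in fact the main engine: since ${\alpha\in\Q}$, for \emph{every} $\gamma$ one has ${\alpha=-(x^\gamma-x^{\sigma\gamma})/(y^\gamma-y^{\sigma\gamma})}$. Taking~$x$ dominant gives ${|\alpha|\gg e^{\pi|\Delta|^{1/2}/2}}$; choosing a second $\gamma$ so that neither $x^\gamma$ nor $x^{\sigma\gamma}$ is dominant or subdominant (possible once~$h$ is large enough, by a coset count) gives, with Theorem~\ref{thseparweak} applied to the denominator, ${|\alpha|\ll |\Delta|^4 e^{\pi|\Delta|^{1/2}/3}}$. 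Comparing these two bounds is what forces $|\Delta|$ to be small; only then does a finite check finish.

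You also skip a structural step the paper needs before any of this. From ${\Q(x,x^\sigma)=\Q(y,y^\sigma)}$ alone you cannot yet assert ${h(\Delta_x)=h(\Delta_y)}$, which your bounding argument tacitly requires. The paper invokes two field-theoretic lemmas (Lemmas~\ref{ltwoconj} and~\ref{lsamefield}) to force one of three configurations---${\Delta_x=\Delta_y}$; or ${\{\Delta_x,\Delta_y\}=\{\Delta,4\Delta\}}$ with ${\Delta\equiv 1\bmod 8}$; or ${D_x\ne D_y}$ with ${\Q(x)=\Q(y)}$---and runs the dominant/non-dominant comparison separately in each, the third case being reduced to a short explicit list via~\cite{ABP15}. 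Your sketch does not contain this trichotomy, and without it the ``suitable $\gamma$'' argument cannot be set up.
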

Note that we do not assume ${x\ne y}$, because the statement holds trivially for ${x=y}$.

Together with Theorem~\ref{thsumdiff} this generalizes Theorem~\ref{thspinq}${}'$.

%Note that there exist 29 singular imaginary quadratic discriminants~$\Delta$ with ${h(\Delta)=2}$.

%The proof splits into two cases ${D_x\ne D_y}$ and ${D_x=D_y}$.

\subsection{General conventions}
\label{ssprelim}
Unless the contrary is stated explicitly,  the letter~$\Delta$ stands for an \textsl{imaginary quadratic discriminant}, that is, ${\Delta<0}$ and  ${\Delta\equiv 0,1\bmod 4}$. %The letter~$D$ will denote a \textsl{fundamental discriminant}; that is, in addition to the two conditions imposed on~$\Delta$, we have the following  ${D\equiv 0\bmod 4}$ we have ${D/4\equiv 2,3\bmod 4}$.

We denote by $\OO_\Delta$ the imaginary quadratic order of discriminant~$\Delta$, that is, ${\OO_\Delta =\Z[(\Delta+\sqrt\Delta)/2]}$. Then ${\Delta=Df^2}$, where~$D$ is  discriminant of the number field ${K=\Q(\sqrt\Delta)}$ (called the \textsl{fundamental discriminant} of~$\Delta$) and ${f=[\OO_D:\OO_\Delta]}$ is called the \textsl{conductor} of~$\Delta$.

We denote by %$\CC(\Delta)$ and by
$h(\Delta)$ %the class group and
the class  number of~$\OO_\Delta$. %,  so that ${h(\Delta) =\#\CC(\Delta)}$.

Given a singular modulus~$x$, we denote $\Delta_x$ the discriminant of the associated CM order, and  we write ${\Delta_x=D_xf_x^2}$ with~$D_x$ the fundamental discriminant and~$f_x$ the conductor. We denote by~$\tau_x$ the only~$\tau$ in the standard fundamental domain such that ${j(\tau)=x}$. Further, we denote by~$K_x$ the associated imaginary quadratic field
$$
K_x=\Q(\tau_x)=\Q(\sqrt{D_x})=\Q(\sqrt{\Delta_x}).
$$
We will call~$K_x$ the \textsl{CM-field} of the singular modulus~$x$.

\section{Complex analysis lemmas}
\label{scomplex}

In this section and in the subsequent Sections~\ref{sjprime} and~\ref{sdistance} the letters~$z$ and~$w$ will usually denote complex numbers, and we will systematically write
$$
z=x+yi, \quad w=u+vi.
$$
In particular, in these three sections~$x$ and~$y$ will denote real numbers, not singular moduli.

We denote by ${[z,w]}$ the straight line segment connecting ${z,w\in \C}$, that is
$$
[z,w]=\{zt+w(1-t):t\in [0,1]\}.
$$

The 
lemmas from this section are rather standard, and some of them already well known,
but we prefer to give complete proofs or exact references for the reader's convenience.

\begin{lemma}
\label{llagrange}
Let ${z,w\in \C}$ and let~$f$ be a holomorphic function on a neighborhood of  ${[z,w]}$.
Then
$$
|f(z)-f(w)|\le |z-w|\max\{|f'(\xi)|:\xi\in [z,w]\}.
$$
\end{lemma}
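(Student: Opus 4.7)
The plan is to reduce the statement to the real-variable fundamental theorem of calculus by parametrizing the segment. First I would define $\gamma : [0,1] \to \C$ by $\gamma(t) = w + t(z-w)$, so that $\gamma(0) = w$, $\gamma(1) = z$, and the image of $\gamma$ is exactly $[z,w]$. Since $f$ is holomorphic on an open neighborhood of $[z,w]$, the composition $g(t) := f(\gamma(t))$ is well-defined and continuously differentiable on $[0,1]$ as a $\C$-valued function of a real variable, with derivative given by the chain rule: $g'(t) = f'(\gamma(t)) \cdot (z-w)$.

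Next I would apply the fundamental theorem of calculus componentwise (to the real and imaginary parts of $g$), yielding
$$
f(z) - f(w) = g(1) - g(0) = \int_0^1 g'(t)\, dt = (z-w)\int_0^1 f'(\gamma(t))\, dt.
$$
Taking absolute values and applying the standard estimate $\bigl|\int_0^1 \varphi(t)\, dt\bigr| \le \int_0^1 |\varphi(t)|\, dt$ for continuous $\C$-valued $\varphi$, I would obtain
$$
|f(z) - f(w)| \le |z-w|\int_0^1 |f'(\gamma(t))|\, dt \le |z-w|\max_{\xi \in [z,w]} |f'(\xi)|,
$$
where the last inequality uses that $|f'|$ attains a maximum on the compact segment $[z,w]$ (by continuity of $f'$ on a neighborhood of this compact set).

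There is no real obstacle here; the only points to mention with a bit of care are (a) the justification of the chain rule for a holomorphic function composed with a real-parameter path, and (b) the integral triangle inequality for $\C$-valued integrands, both of which are entirely standard. The hypothesis that $f$ is holomorphic on a \emph{neighborhood} of $[z,w]$ (not merely on $[z,w]$) is exactly what guarantees that $f'$ exists and is continuous on the segment, which is what makes the maximum in the statement well-defined.
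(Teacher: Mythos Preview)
Your proof is correct and essentially identical to the paper's: both parametrize the segment by $g(t)=f(w+t(z-w))$, write $g(1)-g(0)=\int_0^1 g'(t)\,dt$, apply the chain rule $g'(t)=f'(\gamma(t))(z-w)$, and bound the integral by the maximum of the integrand. The paper is slightly more terse, but the argument is the same.
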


\begin{proof}
Consider the  function ${g(t)= f(zt+w(1-t))}$ on the interval $[0,1]$. We have
$$
|g(1)-g(0)|=\left|\int_0^1g'(t)dt\right|\le \max\{|g'(t)|:t\in [0,1]\}.
$$
Since ${g(1)-g(0)=f(z)-f(w)}$ and ${g'(t)=f'(zt+w(1-t))(z-w)}$, the result follows.
\end{proof}

This lemma gives an upper estimate for the difference ${|f(z)-f(w)|}$ in terms of ${|z-w|}$. 
For the lower estimate we will use the following lemma. 

\begin{lemma}[{\cite[Lemma~2.4]{BLP16}}]
\label{lschw}
Let~$f$ be a holomorphic function in an open neighborhood of the disc ${|z-a|\le R}$ and assume that ${|f(z)|\le B}$ in this disc. Further, let~$\ell$ be the order of vanishing of~$f$ at~$a$; that is,  
$$
f^{(k)}(a)=0 \quad (k=0,1,\ldots, \ell-1), \qquad f^{(\ell)}(a)\ne 0.
$$
 %Let~$r$  satisfy ${0\le r< R}$ and ${(|A|+B)r/R<|A|}$.
Set ${A=f^{(\ell)}(a)/\ell!}$\,. Then, in the same disc  we have the estimate
\begin{equation*}
%\label{eschw}
%\left(|A|-\frac{(|A|+B)r}R\right)|z-a|^\ell\le |f(z)|\le \left(|A|+\frac{(|A|+B)r}R\right)|z-a|^\ell.
\left|f(z)-A(z-a)^\ell\right|\le \frac{|A|R^\ell+B}{R^{\ell+1}}|z-a|^{\ell+1}.
\end{equation*}
\end{lemma}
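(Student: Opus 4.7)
The plan is to reduce this to the maximum modulus principle applied to an auxiliary function. Define
\[
g(z) = f(z) - A(z-a)^\ell.
\]
By the hypothesis on the Taylor expansion of~$f$ at~$a$, the function~$g$ vanishes at~$a$ to order at least ${\ell+1}$, because the coefficient of $(z-a)^\ell$ in the Taylor expansion of~$f$ is exactly~$A$, and the lower-order terms vanish. Hence the function
\[
h(z) = \frac{g(z)}{(z-a)^{\ell+1}}
\]
extends holomorphically to the whole disc ${|z-a|\le R}$ (the apparent singularity at~$a$ is removable).

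Next I would estimate $|h|$ on the circle ${|z-a|=R}$. On this circle we have ${|(z-a)^{\ell+1}|=R^{\ell+1}}$, while by the triangle inequality
\[
|g(z)| \le |f(z)| + |A|\,|z-a|^\ell \le B + |A|R^\ell.
\]
Therefore ${|h(z)|\le (B+|A|R^\ell)/R^{\ell+1}}$ on the boundary circle. By the maximum modulus principle applied to the holomorphic function~$h$ on the closed disc, the same upper bound holds throughout ${|z-a|\le R}$. Multiplying back by $|z-a|^{\ell+1}$ gives exactly the desired inequality.

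There is no real obstacle here: the only thing to be slightly careful about is the justification that~$h$ is holomorphic at~$a$ itself (which follows from~$g$ having a zero of order ${\ge \ell+1}$ there, so that the quotient has a removable singularity), and the application of the maximum principle on the closed disc, which is legitimate since by hypothesis~$f$, and hence~$h$, is holomorphic on an open neighborhood of that closed disc.
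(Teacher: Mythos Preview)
Your proof is correct and is precisely the standard Schwarz-type argument one expects for this statement. Note that the paper does not actually prove this lemma: it is quoted verbatim from \cite[Lemma~2.4]{BLP16}, so there is no ``paper's own proof'' to compare against, but your argument is exactly the one given in that reference.
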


We will also need the following explicit version of the Inverse Function Theorem.

\begin{lemma}
\label{lrouche}
Let~$f$ be a holomorphic function in an open neighborhood of the disc ${|z-a|\le R}$ and assume that ${|f(z)-f(a)|\le B}$ in this disc. Furthermore, assume  that ${f'(a)=A\ne 0}$. Set
$$
C= \frac{|A|}R+\frac{B}{R^2},
$$
and let~$r$ be a positive number satisfying
\begin{equation}
\label{eassumr}
r\le\min\left\{R,
\frac{|A|}{3C}\right\}.
\end{equation}
Then, for every ${w\in \C}$ satisfying
\begin{equation}
\label{ewclose}
|w-f(a)|\le \frac{|A|}{2} r,
\end{equation}
there exists a unique~$z$ in the disc ${|z-a|\le r}$ such that ${f(z)=w}$.
\end{lemma}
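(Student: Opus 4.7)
The plan is to apply Rouché's theorem, comparing $f(z)-w$ with its linearization $h(z) := f(a) + A(z-a) - w$. Viewed as a polynomial of degree one, $h$ has a unique zero at $z_0 = a + (w-f(a))/A$, which by \eqref{ewclose} satisfies $|z_0 - a| \le r/2$, so in particular lies in the open disc $|z-a| < r$. If I can show $|f(z)-w - h(z)| < |h(z)|$ on the circle $|z-a| = r$, Rouché will yield a unique zero of $f(z)-w$ in the open disc, and strict inequality on the boundary circle will rule out zeros there, giving the closed-disc statement.

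The key ingredient is a quadratic bound for $f(z) - h(z) = f(z) - f(a) - A(z-a)$, for which Lemma~\ref{lschw} is tailor-made. Applying it to $F(z) := f(z) - f(a)$ on the disc $|z-a| \le R$, which vanishes at $a$ with derivative $A$ and is bounded there by $B$, I obtain
$$
|f(z) - f(a) - A(z-a)| \le \frac{|A|R + B}{R^2}|z-a|^2 = C|z-a|^2
$$
valid on the whole disc of radius $R$, and hence in particular on $|z-a| \le r$ since $r \le R$.

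It then remains to check the two Rouché estimates on the circle $|z-a| = r$. Using the hypothesis $r \le |A|/(3C)$, the perturbation satisfies
$$
|f(z) - w - h(z)| = |f(z) - f(a) - A(z-a)| \le Cr^2 \le \frac{|A|r}{3},
$$
while the reverse triangle inequality together with \eqref{ewclose} gives
$$
|h(z)| \ge |A|\,|z-a| - |w-f(a)| \ge |A|r - \frac{|A|r}{2} = \frac{|A|r}{2}.
$$
Since $|A|r/3 < |A|r/2$, Rouché applies and both existence and uniqueness follow. I do not expect a genuine obstacle here: the constant~$3$ in the bound $r \le |A|/(3C)$ is calibrated precisely to make the final inequality $Cr^2 < |A|r/2$ hold with a bit of slack, and the factor~$1/2$ in~\eqref{ewclose} is what makes the reverse triangle inequality give a matching lower bound on~$|h(z)|$.
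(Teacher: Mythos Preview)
Your proof is correct and is essentially the same as the paper's: both apply Lemma~\ref{lschw} to $f(z)-f(a)$ to obtain the quadratic bound $|f(z)-f(a)-A(z-a)|\le C|z-a|^2$, then compare $f(z)-w$ with the linear function $f(a)+A(z-a)-w$ on the circle $|z-a|=r$ and invoke Rouch\'e. Your treatment is slightly more explicit in two places (locating the zero of $h$ at distance $\le r/2$ from~$a$, and noting that the strict inequality on the circle excludes boundary zeros so as to get the closed-disc statement), but the argument is the same.
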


\begin{proof}
By  
Lemma~\ref{lschw} applied to
the function ${f(z)-f(a)}$, 
in the disc ${|z-a |\le R}$ we have
$$
|f(z)-f(a)-A(z-a)|\le C|z-a|^2.
$$
Then on the circle ${|z-a|=r}$ we have
\begin{align*}
|f(z)-w-(f(a)+A(z-a)-w)|&\le Cr^2,\\
|f(a)+A(z-a)-w|&\ge \frac{|A|}{2}r,
\end{align*}
where in the second inequality we used~\eqref{ewclose}.

By~\eqref{eassumr}, it follows that
$$
|f(z)-w-(f(a)+A(z-a)-w)|<|f(a)+A(z-a)-w|
$$
on the circle ${|z-a|=r}$. Since the equation ${f(a)+A(z-a)=w}$ has exactly one solution in~$z$, and this solution lies in the disc ${|z-a|\le r}$ by~\eqref{ewclose}, the Theorem of Rouché implies that the equation ${f(z)=w}$ also has a unique solution in the same disc.
\end{proof}

\begin{lemma}
\label{lexp}
\begin{enumerate}
\item
For every ${z\in \C}$ satisfying ${|z|<1}$ we have
\begin{equation}
\label{eexpsmall}
|e^z-1|\ge |z|\frac{1-|z|}{1-|z|/2}.
\end{equation}
In particular, if ${|z|\le 1/2}$ then
\begin{equation}
\label{eexphalf}
|e^z-1|\ge \frac23|z|.
\end{equation}
\item
For every ${z=x+yi\in \C}$ satisfying
\begin{equation}
\label{eexpconds}
x \le 0,\quad |y|\le \pi, \quad |z|\ge 1/2,
\end{equation}
we have
\begin{equation}
\label{eexplarge}
|e^z-1|> 0.27.
\end{equation}
\end{enumerate}
\end{lemma}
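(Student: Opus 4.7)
My plan for part~(1) is to expand $e^z-1$ as a power series and dominate the tail by a geometric series. Writing
$$
e^z - 1 = z\Bigl(1 + \sum_{n \ge 1}\frac{z^n}{(n+1)!}\Bigr)
$$
and using the elementary bound $(n+1)! \ge 2^n$ (valid for every $n\ge 1$), the reverse triangle inequality yields
$$
|e^z-1| \;\ge\; |z|\Bigl(1 - \sum_{n \ge 1}(|z|/2)^n\Bigr) \;=\; |z|\cdot\frac{1-|z|}{1-|z|/2},
$$
which is exactly~\eqref{eexpsmall}. For the second assertion~\eqref{eexphalf} it then suffices to note that $t\mapsto (1-t)/(1-t/2)$ is decreasing on $[0,1)$ (immediate by differentiation) and equals $2/3$ at $t=1/2$.

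For part~(2), I would argue by contradiction, starting from the identity
$$
|e^z-1|^2 \;=\; (e^x-1)^2 + 2e^x(1-\cos y),
$$
which expresses $|e^z-1|^2$ as a sum of two non-negative terms. Assuming $|e^z-1| \le 0.27$, each summand is at most $(0.27)^2 = 0.0729$. The bound on the first summand, together with $x\le 0$, gives $e^x \ge 0.73$, whence $|x| \le -\ln 0.73$. Substituting $e^x \ge 0.73$ into the bound on the second summand gives $1-\cos y \le 0.0729/1.46 < 0.05$, whence $|y| \le \arccos 0.95$ (the hypothesis $|y|\le\pi$ in~\eqref{eexpconds} eliminating the other candidates).

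The proof then closes with a numerical verification: $(\ln 0.73)^2 + (\arccos 0.95)^2 < 1/4$. Both squares are just under $0.1$, so the sum is comfortably under $0.2$, contradicting $x^2+y^2\ge 1/4$. The only delicate point in the lemma is precisely this last numerical estimate, which is what fixes the constant $0.27$ in~\eqref{eexplarge}; the bounds extracted from each summand must be tracked without wasteful slack, but the margin is still safe. Everything else is routine power series and elementary calculus.
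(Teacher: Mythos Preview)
Your argument for part~(1) is essentially identical to the paper's: both bound the tail of the exponential series by the geometric series with ratio $|z|/2$, via $k!\ge 2^{k-1}$.

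For part~(2) the paper takes a different route. It splits into the cases $x\le -0.32$ and $-0.32\le x\le 0$: in the first, $|e^z-1|\ge 1-e^x>0.27$ directly; in the second, the constraint $|z|\ge 1/2$ forces $|y|\ge\sqrt{0.25-0.32^2}\ge 0.384$, and then the inequality $|e^{x+iy}-1|\ge e^x|e^{iy}-1|=2e^x\sin(|y|/2)$ (valid for $x\le 0$) finishes the job. Your approach via the exact identity $|e^z-1|^2=(e^x-1)^2+2e^x(1-\cos y)$ is cleaner in that it avoids an ad~hoc case split and an auxiliary inequality; the paper's approach, on the other hand, makes the source of the constant $0.27$ more transparent (it is just below $1-e^{-0.32}$) and shows how one would adjust it. Both proofs ultimately reduce to a single numerical check of comparable difficulty, and both are correct.
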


\begin{proof}
We have
\begin{align*}
|e^z-1|\ge |z|-\sum_{k=2}^\infty \frac{|z|^k}{k!}\ge |z|-|z|\sum_{k=1}^\infty\left(\frac{|z|}2\right)^k=|z|\frac{1-|z|}{1-|z|/2}.
\end{align*}
This proves~\eqref{eexpsmall} for ${|z|<1}$, and~\eqref{eexphalf} for ${|z|\le 1/2}$ is an immediate consequence.

Now assume that~$z$ satisfies~\eqref{eexpconds}. If ${x\le -0.32}$ then
$$
|e^z-1|\ge 1-e^{-0.32}>0.27.
$$
Now assume that ${-0.32\le x\le 0}$. Then
$$
\pi\ge|y|\ge \sqrt{0.5^2-0.32^2}\ge 0.384.
$$
We obtain
\begin{align*}
|e^{x+iy}-1|\ge e^x|e^{iy}-1|= 2e^x\sin(y/2)\ge 2e^{-0.32}\sin(0.192)>0.27.
\end{align*}
The lemma is proved.
\end{proof}

\section{Estimates for the $j$-invariant and its derivative}
\label{sjprime}

We denote by $\calH$ the Poincaré plane, and by $\calF$ the standard fundamental domain. To be precise,~$\calF$ is the open hyperbolic triangle with vertices~$\zeta_3$,~$\zeta_6$ and~$i\infty$, together with the ``right'' part of its boundary, that is, the geodesics $[i,\zeta_6]$ and $[\zeta_6,i\infty]$.
Here
$$
\zeta_3=e^{2\pi i/3}= \frac{-1+\sqrt{-3}}{2}, \qquad \zeta_6=e^{\pi i/3}= \frac{1+\sqrt{-3}}{2}.
$$

For ${z\in \calH}$ we write ${q_z= e^{2\pi i z}}$. When there is no risk of confusion, we omit the index and write simply~$q$ instead of~$q_z$.
Recall that
\begin{equation}
\label{ejexpansion}
j(z)=\sum_{n=-1}^\infty c_nq^n,
\end{equation}
where the coefficients
$$
c_{-1}=1, \quad c_0=744,  \quad c_1=196884, \quad c_2=21493760, \quad c_3=864299970,\ \ldots
$$
are positive integers. We also denote
$$
j_0(z):=j(z)-q^{-1}-c_0= c_1q+c_2q^2+\ldots
$$

\subsection{Simple estimates}

We will systematically use the following (almost trivial) observations.

\begin{lemma}
\label{ltriv}
Let  ${z\in \calH}$ and ${v\in \R}$ be such that ${\Im z \ge v}$. Then %for ${m\ge -1}$ we have
\begin{align}
\label{ejm}
|j(z)-744-q_z^{-1}|=|j_0(z)|&\le j_0(iv),\\
\label{ejprimem}
|j'(z)+2\pi i q_z^{-1}| =|j_0'(z)|&\le \frac1i j_0'(iv),\\
\label{ejprimesimplelower}
|j'(z)|&\ge ij'(iv).
\end{align}
In particular, writing ${z=x+yi}$, we have % if ${v_1,v_2\in \R}$ are such that ${v_1\le \Im z \le v_2}$ then
\begin{align}
\label{ejloose}
|j(z)|&\le j(iy),\\
\label{ejprimeloose}
|j'(z)|&\le 2\pi \left(2e^{2\pi y} +\frac{1}{2\pi i} j'(iy)\right),\\
\label{ejprimelooselower}
|j'(z)|&\ge ij'(yi).
\end{align}
\end{lemma}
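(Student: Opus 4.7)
The plan is to base everything on the $q$-expansion $j(z) = q_z^{-1} + 744 + j_0(z)$ with $j_0(z) = \sum_{n\ge 1} c_n q_z^n$ (and termwise derivative $j_0'(z) = 2\pi i\sum_{n\ge 1} nc_n q_z^n$), exploiting that the coefficients $c_n$ are \emph{positive} and that ${|q_z| = e^{-2\pi y} \le e^{-2\pi v}}$ whenever $y = \Im z \ge v$. Writing $z = x + yi$, the quantities $j_0(iv)$ and $\frac{1}{i}j_0'(iv) = 2\pi\sum n c_n e^{-2\pi nv}$ are genuine positive reals, which is what allows the right-hand sides of \eqref{ejm}--\eqref{ejprimelooselower} to be interpreted as real numbers.

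First I would prove \eqref{ejm} and \eqref{ejprimem} by applying the triangle inequality term by term to the $q$-series:
$$|j_0(z)| \le \sum_{n\ge 1} c_n e^{-2\pi ny} \le \sum_{n\ge 1} c_n e^{-2\pi nv} = j_0(iv),$$
and analogously for $j_0'$. Then I would obtain \eqref{ejprimesimplelower} from the \emph{reverse} triangle inequality applied to $j'(z) = -2\pi i q_z^{-1} + j_0'(z)$:
$$|j'(z)| \ge 2\pi e^{2\pi y} - |j_0'(z)| \ge 2\pi e^{2\pi v} - \tfrac{1}{i}j_0'(iv),$$
and a direct computation of $j'(iv) = -2\pi i e^{2\pi v} + j_0'(iv)$ shows that the last expression equals $i\, j'(iv)$.

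The ``in particular'' part is then just specialization to $v = y$. For \eqref{ejloose} I would write $|j(z)| \le e^{2\pi y} + 744 + |j_0(z)|$ and use \eqref{ejm} with $v=y$, noting $j(iy) = e^{2\pi y} + 744 + j_0(iy)$. For \eqref{ejprimeloose} the ordinary triangle inequality on $j'(z) = -2\pi i q_z^{-1} + j_0'(z)$ gives $|j'(z)| \le 2\pi e^{2\pi y} + \frac{1}{i}j_0'(iy)$; rearranging via $\frac{1}{2\pi i}j'(iy) = -e^{2\pi y} + \frac{1}{2\pi i}j_0'(iy)$ rewrites the bound in the required form. Finally \eqref{ejprimelooselower} is literally \eqref{ejprimesimplelower} with $v=y$.

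There is no substantial obstacle; the only thing requiring care is the bookkeeping of signs and factors of $i$ so that every right-hand side is unambiguously a nonnegative real number. Once the $q$-series with positive coefficients is in hand, each inequality reduces to a single application of the (forward or reverse) triangle inequality plus an elementary rearrangement.
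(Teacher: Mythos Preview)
Your proposal is correct and follows essentially the same approach as the paper: both arguments exploit positivity of the $q$-expansion coefficients together with ${|q_z|\le e^{-2\pi v}}$, applying the triangle inequality termwise for \eqref{ejm}--\eqref{ejprimem} and the reverse triangle inequality for the lower bounds. The only cosmetic difference is in the order of the lower estimates: the paper first observes that ${v\mapsto ij'(iv)}$ is increasing and then reduces \eqref{ejprimesimplelower} to the special case \eqref{ejprimelooselower}, whereas you prove \eqref{ejprimesimplelower} directly (using ${e^{2\pi y}\ge e^{2\pi v}}$ and \eqref{ejprimem}) and then specialize to ${v=y}$ --- your ordering is arguably a touch more direct.
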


\begin{proof}
Set ${w=iv}$. Then ${|q_z|\le q_w}$. Since the coefficients $c_n$ of the expansion~\eqref{ejexpansion} are all positive, we have
$$
|j_0(z)|\le\sum_{n=1}^\infty c_n|q_z|^n\le \sum_{n=1}^\infty c_nq_w^n=j_0(w),
$$
which proves~\eqref{ejm}. Similarly, using that
$$
j_0'(z)=2\pi i\sum_{n=1}^\infty  nc_nq_z^n,
$$
we obtain
$$
|j_0'(z)|\le 2\pi \sum_{n=1}^\infty  nc_nq_w^n =\frac1i j_0'(w),
$$
proving~\eqref{ejprimem}.
%This also proves~\eqref{ejsharp} and~\eqref{ejprimesharp}, because their  left-hand sides are, respectively, $|j_0(z)|$ and $|j_0'(z)|$.
Setting ${v=y}$ in~\eqref{ejm} and~\eqref{ejprimem}, we obtain~\eqref{ejloose} and~\eqref{ejprimeloose}.

We are left with proving~\eqref{ejprimesimplelower} and~\eqref{ejprimelooselower}.
The real function
$$
v\mapsto ij'(vi)= 2\pi \left(e^{2\pi v}-\sum_{n=1}^\infty nc_ne^{-2\pi nv}\right)
$$
is increasing in~$v$. Hence it suffices to prove~\eqref{ejprimelooselower}.  We have
\begin{align*}
|j'(z)|= 2\pi \left |-q_z^{-1}+\sum_{n=1}^\infty nc_nq_z^n\right| \ge 2\pi\left(|q_z|^{-1} -\sum_{n=1}^\infty nc_n|q_z|^n \right)= ij'(iy),
\end{align*}
as wanted.
 \end{proof}

\begin{remark}
Estimates~\eqref{ejprimesimplelower} and~\eqref{ejprimelooselower} are of interest only when ${y\ge v> 1}$, because when  ${v\le 1}$ the right-hand side of~\eqref{ejprimesimplelower} is non-positive, as well as the right-hand side of~\eqref{ejprimelooselower} when ${y \le 1}$.
\end{remark}

Let us consider the functions ${f,g:\R_{>0}\to\R_{>0}}$ defined by
\begin{equation}
\label{edeffg}
f(y)= j(iy), \quad g(y)=  2\pi \left(e^{2\pi y} + \sum_{n=1}^\infty nc_ne^{-2\pi n y}\right) =4\pi e^{2\pi y}+\frac1{ i}j'(iy)
\end{equation}
Note that the right-hand side of~\eqref{ejloose} is $f(y)$ and that   of~\eqref{ejprimeloose} is ${g(y)}$.

\begin{proposition}
\begin{enumerate}
\item
\label{if}
The function~$f$ is decreasing on $(0,1]$, increasing on $[1,+\infty)$ and satisfies ${f(1/y)=f(y)}$.
\item
\label{ig}
There exists ${y_0\in [1.018,1.019]}$ such that~$g$ is decreasing on $(0, y_0]$ and increasing on $[y_0, +\infty)$.
\end{enumerate}
\end{proposition}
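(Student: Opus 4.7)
Both parts reduce to analyzing real series along the imaginary axis. Since $q_z=e^{-2\pi y}$ for $z=iy$, the expansion~\eqref{ejexpansion} gives
\[
f(y)=e^{2\pi y}+744+\sum_{n\ge 1}c_n e^{-2\pi n y},
\qquad
g(y)=2\pi e^{2\pi y}+2\pi\sum_{n\ge 1}nc_n e^{-2\pi n y}.
\]
Termwise differentiation, justified by absolute and locally uniform convergence for $y>0$, yields $f'(y)=2\pi\phi(y)$ and $g'(y)=(2\pi)^2\psi(y)$, where
\[
\phi(y)=e^{2\pi y}-\sum_{n\ge 1}nc_ne^{-2\pi n y},\qquad
\psi(y)=e^{2\pi y}-\sum_{n\ge 1}n^2c_ne^{-2\pi n y}.
\]
Differentiating once more flips the sign in front of the series, so $\phi'(y)=2\pi\bigl(e^{2\pi y}+\sum_{n\ge 1}n^2c_ne^{-2\pi ny}\bigr)>0$ and $\psi'(y)=2\pi\bigl(e^{2\pi y}+\sum_{n\ge 1}n^3c_ne^{-2\pi ny}\bigr)>0$; hence both $\phi$ and $\psi$ are strictly increasing on $(0,+\infty)$.

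For part~\ref{if}, the $\mathrm{SL}_2(\Z)$-invariance $j(-1/z)=j(z)$ applied at $z=iy$ (noting that $-1/(iy)=i/y$) gives the symmetry $f(1/y)=f(y)$. Differentiating this relation at $y=1$ yields $f'(1)=0$, so $\phi(1)=0$; combined with the strict monotonicity of $\phi$, this forces $\phi<0$ on $(0,1)$ and $\phi>0$ on $(1,+\infty)$, which is the required monotonicity of $f$.

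For part~\ref{ig}, strict monotonicity of $\psi$ together with the intermediate value theorem reduces the claim to the two strict inequalities $\psi(1.018)<0<\psi(1.019)$. I propose to verify these by summing the first several terms ($n=1,\dots,N$ for a small $N$) of each series using the explicit values of $c_1,\dots,c_N$ listed after~\eqref{ejexpansion}, and bounding the tail $\sum_{n>N}n^2c_ne^{-2\pi n y}$ via the elementary estimate $c_n\le j_0(iv)e^{2\pi n v}$, which follows from positivity of the coefficients in the expansion $j_0(iv)=\sum_{m\ge 1}c_me^{-2\pi m v}$ (the same idea underlying Lemma~\ref{ltriv}). Choosing $v$ slightly below $y$ turns the tail into a convergent geometric series in $e^{-2\pi(y-v)}$.

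The one delicate point is the near-cancellation in part~\ref{ig}: at $y\approx 1.018$ the partial sums are of order one while individual terms reach the order of hundreds, so both the partial evaluation and the tail bound must be controlled within a few units. Everything else, namely the symmetry in part~\ref{if} and the termwise differentiations, is standard.
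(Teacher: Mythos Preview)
Your proposal is correct and follows essentially the same approach as the paper: the modular symmetry $j(-1/z)=j(z)$ for part~\ref{if}, and strict monotonicity of $g'$ together with a numerical sign check at $y=1.018$ and $y=1.019$ for part~\ref{ig}. The paper is terser (it simply reports $g'(1.018)=-259.31\ldots$ and $g'(1.019)=118.15\ldots$ as a ``direct calculation''), whereas you spell out a tail-bound strategy; note that with only the three coefficients $c_1,c_2,c_3$ listed in the paper your bound $c_n\le j_0(iv)e^{2\pi nv}$ is just slightly too loose, so in practice you will need one or two further coefficients (e.g.\ $c_4=20245856256$) and a choice like $v\approx 0.5$ to push the tail below the required threshold of a few units.
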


\begin{proof}
Item~\ref{if} follows
from an easy computation. To prove item~\ref{ig}, we compute the derivative of~$g$, that is, 
$$
g'(y) =(2\pi)^2 \left(e^{2\pi y}-\sum_{n=1}^\infty n^2c_ne^{-2\pi ny}\right).
$$
Since the function  ${y\mapsto e^{2\pi y}}$ is increasing on~$\R$ and ${y\mapsto\sum_{n=1}^\infty n^2c_ne^{-2\pi ny}}$ is decreasing on~$\R$, the derivative vanishes at exactly one point ${y_0\in \R}$, being negative on the left of~$y_0$ and positive on the right.  A direct calculation shows that
$$
g'(1.018)=-259.31\dots, \qquad g'(1.019)= 118.15\dots,
$$
whence the result.
\end{proof}

\begin{corollary}
\label{cobetween}
Let~$\alpha,\beta$ be positive real numbers and~$\calD$  a domain in~$\calH$ such that for any ${z\in \calD}$ we have
$$
\alpha\le \Im z \le \beta.
$$
Then, for every ${z\in \cal D}$ we have
\begin{align*}
|j(z)|\le \max\{f(\alpha),f(\beta)\},\qquad
|j'(z)|\le \max\{g(\alpha),g(\beta)\}.
\end{align*}
\end{corollary}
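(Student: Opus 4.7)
The plan is to combine the pointwise upper bounds from Lemma~\ref{ltriv} with the unimodal shape of~$f$ and~$g$ established in the preceding proposition; the statement will then follow with essentially no extra work.

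First, I fix ${z\in \calD}$ and write ${z=x+yi}$, so that the hypothesis gives ${\alpha\le y\le \beta}$. Estimate~\eqref{ejloose} of Lemma~\ref{ltriv} yields ${|j(z)|\le j(iy)=f(y)}$. For the derivative, estimate~\eqref{ejprimeloose} gives
$$
|j'(z)|\le 2\pi\Bigl(2e^{2\pi y}+\tfrac{1}{2\pi i}j'(iy)\Bigr),
$$
and the right-hand side here is, by definition~\eqref{edeffg}, exactly $g(y)$ (note that $4\pi e^{2\pi y} = 2\pi\cdot 2 e^{2\pi y}$). So the problem reduces to showing
$$
\max_{y\in[\alpha,\beta]} f(y)\le \max\{f(\alpha),f(\beta)\}, \qquad \max_{y\in[\alpha,\beta]} g(y)\le \max\{g(\alpha),g(\beta)\}.
$$

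Second, I invoke the preceding proposition: the function $f$ is decreasing on $(0,1]$ and increasing on $[1,+\infty)$, and $g$ has the same shape with turning point at some ${y_0\in[1.018,1.019]}$. Both are thus continuous unimodal functions with a unique interior minimum. A general trivial observation takes care of the rest: if ${h:(0,+\infty)\to \R}$ is decreasing on $(0,m]$ and increasing on $[m,+\infty)$, then on any interval ${[\alpha,\beta]\subset (0,+\infty)}$ one has ${\max_{[\alpha,\beta]}h=\max\{h(\alpha),h(\beta)\}}$: indeed, if ${m\le \alpha}$ then~$h$ is monotone increasing on $[\alpha,\beta]$ with max $h(\beta)$; if ${m\ge \beta}$ then~$h$ is monotone decreasing with max $h(\alpha)$; and if ${\alpha<m<\beta}$, then the max on $[\alpha,m]$ is $h(\alpha)$ and the max on $[m,\beta]$ is $h(\beta)$, giving the claim. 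Applying this with $h=f$ and $h=g$ completes the proof.

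There is no real obstacle here: the result is a direct corollary, and the only thing worth verifying carefully is that the right-hand side of~\eqref{ejprimeloose} is indeed~$g(y)$ as given in~\eqref{edeffg}, which is immediate from the definition.
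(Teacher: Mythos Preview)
Your proof is correct and follows essentially the same approach as the paper: bound $|j(z)|$ and $|j'(z)|$ by $f(y)$ and $g(y)$ via Lemma~\ref{ltriv}, then use the unimodality of~$f$ and~$g$ from the preceding proposition to conclude that the maximum over $[\alpha,\beta]$ is attained at an endpoint. The paper's write-up is slightly terser (it just splits into $y\le 1$ and $y\ge 1$ for~$f$, and $y\le y_0$, $y\ge y_0$ for~$g$), but the content is identical.
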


\begin{proof}
For ${z=x+yi\in \cal D}$ we have ${|j(z)|\le f(y)}$ by~\eqref{ejloose} (recall that  
the right-hand side of~\eqref{ejloose} is $f(y)$). By the hypothesis, ${\alpha \le y\le \beta}$. Item~\ref{if} implies that ${f(y)\le f(\alpha)}$ when ${y\le 1}$ and ${f(y)\le f(\beta)}$ when ${y\ge 1}$. This proves the bound for $|j(z)|$. The proof for  $|j'(z)|$ is the same, with~$f$ replaced by~$g$ and~$1$ by~$y_0$. 
\end{proof}

\subsection{Neighborhoods of elliptic points}

Next, we want to estimate $j(z)$ and $j'(z)$ when~$z$ is close to one of the elliptic points~$\zeta_3$,~$\zeta_6$ and~$i$. Since ${\zeta_3=\zeta_6-1}$,  we restrict ourselves  to~$\zeta_6$ and~$i$.

Let us introduce the following quantities:
$$%\begin{align*}
A_0:=\frac{j'''(\zeta_6)}{3!}= -27\frac{\Gamma(1/3)^{18}}{\pi^9}i,\qquad
A_1:=\frac{j''(i)}{2!}= -81\frac{\Gamma(1/4)^8}{\pi^4}.
$$%\end{align*}
For the calculation of the exact values of $j'''(\zeta_6)$ and $j''(i)$ see, for instance,~\cite[page~777]{Ku14}. The numerical values are
$$
|A_0|=45745.0806\ldots, \qquad |A_1|=24827.5650\ldots
$$

\begin{proposition}
\label{prinangles}
\begin{enumerate}
\item
For ${0<R<\sqrt3/2}$, set
\begin{align*}
\kappa_0(R) &:=\frac{|A_0|}{R}+\frac{f(\sqrt3/2-R)}{R^4}, \\
\lambda_0(R)&:=\frac{3|A_0|}{R}+\frac{\max\{g(\sqrt3/2-R),g(\sqrt3/2+R)\}}{R^3},
\end{align*}
where~$f$ and~$g$ are defined in~\eqref{edeffg}.
Then, in the circle ${|z-\zeta_6|\le R}$, we have %${|j(z)|<31000}$ and
\begin{align}
\label{ejzeta}
\bigl|j(z)-A_0(z-\zeta_6)^3\bigr|&\le \kappa_0(R)|z-\zeta_6|^4,\\
\label{ejprimezeta}
\bigl|j'(z)-3A_0(z-\zeta_6)^2\bigr|&\le \lambda_0(R)|z-\zeta_6|^3.
\end{align}

\item
For ${0<R<1}$, set
\begin{align*}
\kappa_1(R) &:=\frac{|A_1|}{R}+\frac{f(1-R)}{R^3}, \\
\lambda_1(R)&:=\frac{2|A_1|}{R}+\frac{\max\{g(1-R),g(1+R)\}}{R^2}.
\end{align*}
Then, in the circle ${|z-i|\le R}$, we have %${|j(z)|<31000}$ and
\begin{align}
\label{eji}
\bigl|j(z)-1728-A_1(z-i)^2\bigr|&\le \kappa_1(R)|z-i|^3,\\
\label{ejprimei}
\bigl|j'(z)-2A_1(z-i)\bigr|&\le \lambda_1(R)|z-i|^2.
\end{align}
\end{enumerate}
\end{proposition}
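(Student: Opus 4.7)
All four inequalities are instances of the quantitative Schwarz lemma (Lemma \ref{lschw}), applied to suitable functions with a known order of vanishing at the elliptic points. The classical input is that $\zeta_6$ is an elliptic fixed point of $\mathrm{PSL}_2(\Z)$ of order~$3$ with $j(\zeta_6)=0$, so $j$ has a triple zero at $\zeta_6$ (i.e.\ $j(\zeta_6)=j'(\zeta_6)=j''(\zeta_6)=0$) and $A_0=j'''(\zeta_6)/3!$ is the leading Taylor coefficient; differentiating, $j'$ has a double zero at $\zeta_6$ with leading coefficient $3A_0$. Likewise, $i$ is elliptic of order $2$ with $j(i)=1728$, so $j-1728$ has a double zero at $i$ with leading coefficient $A_1$, and $j'$ has a simple zero at $i$ with leading coefficient $2A_1$.

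\textbf{Application of Lemma \ref{lschw}.} I would invoke the lemma four times, with the parameters $(\ell,A)=(3,A_0)$, $(2,3A_0)$, $(2,A_1)$, $(1,2A_1)$, on the functions $j$, $j'$, $j-1728$, $j'$ at the appropriate point. The output $\frac{|A|R^\ell+B}{R^{\ell+1}}|z-a|^{\ell+1}$ of Lemma \ref{lschw} reproduces exactly the claimed bounds $\kappa_0(R)$, $\lambda_0(R)$, $\kappa_1(R)$, $\lambda_1(R)$ provided the uniform bound $B$ on the magnitude of the function in the corresponding disc can be taken to be $f(\sqrt{3}/2-R)$, $\max\{g(\sqrt{3}/2\pm R)\}$, $f(1-R)$, $\max\{g(1\pm R)\}$ respectively. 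Each of these $B$ is furnished by Corollary \ref{cobetween} applied to the disc in question: since the disc $|z-\zeta_6|\le R$ (resp.\ $|z-i|\le R$) lies in the strip $\sqrt{3}/2-R\le\Im z\le\sqrt{3}/2+R$ (resp.\ $1-R\le\Im z\le 1+R$), the corollary bounds $|j(z)|$ by the maximum of $f$ and $|j'(z)|$ by the maximum of $g$ over the corresponding interval. The reflection $f(1/y)=f(y)$ together with item \ref{if} collapses $\max f$ to its left endpoint, since $(\sqrt{3}/2+R)^{-1}>\sqrt{3}/2-R$ and $(1+R)^{-1}>1-R$; for $g$ one has to keep both endpoints because the minimum $y_0\in[1.018,1.019]$ of $g$ may sit inside the relevant interval.

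\textbf{Main subtlety.} The only step that is not a bare unpacking of Lemma \ref{lschw} and Corollary \ref{cobetween} is obtaining $|j(z)-1728|\le f(1-R)$ (rather than $|j(z)|\le f(1-R)$) in the disc around $i$. I would derive this by writing $j(z)-1728=q_z^{-1}-984+j_0(z)$ and running the same $q$-expansion majorization as in the proof of Lemma \ref{ltriv}, so that $|j(z)-1728|\le e^{2\pi y}+984+j_0(iy)$ for $z=x+iy$, and then comparing this expression with $f(1-R)=e^{2\pi(1-R)}+744+j_0(i(1-R))$ on the range $y\in[1-R,1+R]$, using that $f(1-R)\ge f(1)=1728$ to absorb the constant term.
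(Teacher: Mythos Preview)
Your plan is the paper's proof: bound $|j|$ and $|j'|$ on each disc via Corollary~\ref{cobetween}, collapse $\max\{f(\sqrt3/2\pm R)\}$ to $f(\sqrt3/2-R)$ using $(\sqrt3/2-R)(\sqrt3/2+R)<1$ (and likewise at~$i$), then apply Lemma~\ref{lschw} with the appropriate~$\ell$ and~$A$. The paper's treatment of~\eqref{eji} and~\eqref{ejprimei} is literally the single sentence ``The proof \ldots\ is analogous''.

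You correctly isolate the one point the paper glides over: for~\eqref{eji} Lemma~\ref{lschw} must be fed the function $j-1728$, so the required~$B$ is a bound on $|j(z)-1728|$, not on $|j(z)|$. However, your proposed fix does not close. Your $q$-expansion majorization gives
\[
|j(z)-1728|\ \le\ e^{2\pi y}+984+j_0(iy)\ =\ f(y)+240,
\]
and at $y=1-R$ this equals $f(1-R)+240>f(1-R)$; the fact that $f(1-R)\ge 1728$ does not ``absorb'' the surplus~$240$. (At the actual point $z=(1-R)i$ one has $|j(z)-1728|=f(1-R)-1728<f(1-R)$; it is your triangle-inequality bound that overshoots, not the target inequality that fails.) A safe repair is simply to take $B=f(1-R)+240$ from your bound, or the cruder $B=f(1-R)+1728$ from $|j(z)-1728|\le|j(z)|+1728$, at the cost of a slightly larger $\kappa_1(R)$ than the one stated; the paper itself offers no argument here beyond ``analogous''.
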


\begin{proof}
Corollary~\ref{cobetween} implies that in the circle ${|z-\zeta_6|\le R}$
\begin{align*}
|j(z)|&\le \max\{f(\sqrt3/2-R),f(\sqrt3/2+R)\},\\
|j'(z)|&\le \max\{g(\sqrt3/2-R),g(\sqrt3/2+R)\}
\end{align*}
Since
$$
(\sqrt3/2-R)(\sqrt3/2+R)<1,
$$
we have
$$
\max\{f(\sqrt3/2-R),f(\sqrt3/2+R)\}= f(\sqrt3/2-R).
$$
Now applying Lemma~\ref{lschw} we obtain~\eqref{ejzeta} and~\eqref{ejprimezeta}. The proof of~\eqref{eji} and~\eqref{ejprimei} is analogous .
\end{proof}

\begin{corollary}
\label{coinangles}
For ${|z-\zeta_6|\le 0.19}$ we have
\begin{align}
\label{ejzetanum}
\bigl|j(z)-A_0(z-\zeta_6)^3\bigr|&\le 7.26\cdot10^6|z-\zeta_6|^4,\\
\label{ejprimezetanum}
\bigl|j'(z)-3A_0(z-\zeta_6)^2\bigr|&\le 2.27\cdot10^7|z-\zeta_6|^3.
\end{align}
For ${|z-i|\le 0.2}$ we have
\begin{align}
\label{ejinum}
\bigl|j(z)-1728-A_1(z-i)^2\bigr|&\le 4.04\cdot10^5|z-i|^3,\\
\label{ejprimeinum}
\bigl|j'(z)-2A_1(z-i)\bigr|&\le 9.1\cdot10^5|z-i|^2.
\end{align}
\end{corollary}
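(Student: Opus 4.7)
The plan is to apply Proposition~\ref{prinangles} at $R=0.19$ for the estimates around $\zeta_6$ and at $R=0.2$ for those around $i$, and then verify by direct numerical computation that
\begin{align*}
\kappa_0(0.19) &\le 7.26\cdot10^6, & \lambda_0(0.19) &\le 2.27\cdot10^7, \\
\kappa_1(0.2)  &\le 4.04\cdot10^5, & \lambda_1(0.2)  &\le 9.1\cdot10^5.
\end{align*}
Given these four inequalities, \eqref{ejzetanum}--\eqref{ejprimeinum} are immediate from~\eqref{ejzeta}--\eqref{ejprimei}. So everything reduces to numerically evaluating the auxiliary functions $f$ and $g$ from~\eqref{edeffg} at the four real arguments $\sqrt3/2\pm0.19$ and $1\pm0.2$, and substituting the resulting numbers, together with the values $|A_0|=45745.08\ldots$ and $|A_1|=24827.56\ldots$ recorded just before Proposition~\ref{prinangles}, into the explicit formulas defining the $\kappa_i(R)$ and $\lambda_i(R)$.

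For the arguments $y>1$ (namely $y=1.056$ and $y=1.2$) the $q$-series for $f(y)=j(iy)$ in $q=e^{-2\pi y}$ converges very rapidly, with $q^{-1}$ and the constant term $744$ already giving an accurate value and the $c_1q$ correction well-controlled. For the arguments $y<1$ (namely $y=0.676$ and $y=0.8$) I would invoke the modular identity $f(y)=f(1/y)$ from item~\ref{if} of the preceding proposition to reduce to $f(1.479)$ and $f(1.25)$, where again the expansion converges quickly. The function $g$ is handled analogously via its defining series with coefficients $nc_n$, evaluated at the same four arguments.

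Since by item~\ref{ig} of the preceding proposition $g$ is decreasing on $(0,y_0]$ and increasing on $[y_0,+\infty)$ with $y_0\in[1.018,1.019]$, both endpoints $\sqrt3/2\pm0.19$ and both endpoints $1\pm0.2$ lie on opposite sides of $y_0$, so the maximum $\max\{g(\sqrt3/2\pm R)\}$ appearing in $\lambda_0$ and the maximum $\max\{g(1\pm R)\}$ appearing in $\lambda_1$ are simply the larger of the two values in each pair, with no interior extremum to chase. The main obstacle is no more than careful bookkeeping: keeping enough digits in each partial sum of the $q$-series so that the truncation error stays comfortably below the safety margin between the computed values of the $\kappa_i(R),\lambda_i(R)$ and the stated upper bounds, and similarly absorbing the slack built into the crude bound $|j(z)|\le f(\Im z)$ used inside Proposition~\ref{prinangles}.
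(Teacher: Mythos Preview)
Your plan has a genuine numerical gap: the claimed inequalities $\kappa_0(0.19)\le 7.26\cdot10^6$ and $\kappa_1(0.2)\le 4.04\cdot10^5$ are false. Take the first one. With $\sqrt3/2-0.19\approx0.676$ and $f(0.676)=f(1/0.676)=f(1.479)\approx e^{2\pi\cdot1.479}+744+\cdots\approx 1.16\cdot10^4$, you get
\[
\kappa_0(0.19)=\frac{|A_0|}{0.19}+\frac{f(0.676)}{0.19^4}\approx 2.4\cdot10^5+\frac{1.16\cdot10^4}{1.30\cdot10^{-3}}\approx 9.2\cdot10^6,
\]
well above $7.26\cdot10^6$. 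Similarly $f(0.8)=f(1.25)\approx 3.4\cdot10^3$ gives $\kappa_1(0.2)\approx 1.24\cdot10^5+3.4\cdot10^3/0.008\approx 5.5\cdot10^5$, above $4.04\cdot10^5$. So the verification step you propose would simply fail.

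The point you are missing is that Proposition~\ref{prinangles} gives, for \emph{each} fixed $R$, a bound valid on the whole disc $|z-\zeta_6|\le R$; hence for the target disc $|z-\zeta_6|\le 0.19$ you are free to take any $R\ge 0.19$ and then minimise $\kappa_0(R)$ over that range. The function $\kappa_0(R)=|A_0|/R+f(\sqrt3/2-R)/R^4$ is not monotone: the first term decreases in $R$ while the second eventually blows up, and the minimum sits strictly to the right of $0.19$. The paper exploits exactly this freedom, taking $R=0.25$ in~\eqref{ejzeta} and $R=0.29$ in~\eqref{eji} (these ``quasi-optimal'' values give $\kappa_0(0.25)\approx 7.25\cdot10^6$ and $\kappa_1(0.29)\approx 4.03\cdot10^5$), while keeping $R=0.19$ in~\eqref{ejprimezeta} and $R=0.2$ in~\eqref{ejprimei} for the $\lambda$-bounds, where your choices happen to coincide with the paper's and do work. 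Once you decouple the four choices of $R$ in this way, the rest of your outline (evaluating $f$ via $f(y)=f(1/y)$ for $y<1$, truncating the $q$-series, etc.) goes through.
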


\begin{proof}
Set the ``quasi-optimal'' values ${R=0.25}$ in~\eqref{ejzeta}, ${R=0.19}$ in~\eqref{ejprimezeta}, ${R=0.29}$ in~\eqref{eji} and ${R=0.2}$ in~\eqref{ejprimei}.
\end{proof}

\subsection{Global lower estimates}
\label{ssglo}

Using  Corollary~\ref{coinangles}, we
obtain the following lower estimates. 

\begin{proposition} Let~$z$ belong to~$\calF$. 
\label{pglobalj}
\begin{enumerate}
\item
\label{izeta6}
We have one of the following two options: either
\begin{equation}
\label{ejnearzetasixorthree}
\text{$\min\{|z-\zeta_6|,|z-\zeta_3|\}\le 0.001$ and $|j(z)|\ge 30000\min\{|z-\zeta_6|,|z-\zeta_3|\}^3$},
\end{equation}
or
\begin{equation}
\label{ejfarfromzetasixandthree}
\text{$\min\{|z-\zeta_6|,|z-\zeta_3|\}\ge 0.001$  and $|j(z)|\ge 3\cdot10^{-5}$}.
\end{equation}

\item
\label{ii}
We have one of the following two options: either
\begin{equation}
\label{ejneari}
\text{$|z-i|\le 0.01$ and $|j(z)-1728|\ge  20000|z-i|^2$},
\end{equation}
or
\begin{equation}
\label{ejfarfromi}
\text{$|z-i|\ge 0.01$ and $|j(z)-1728|\ge 2$}.
\end{equation}
\end{enumerate}
\end{proposition}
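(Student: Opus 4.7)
The strategy for both parts of Proposition~\ref{pglobalj} is to split into a ``close'' case (handled directly by Corollary~\ref{coinangles}) and a ``far'' case (reduced to the close case by a minimum-modulus/boundary-bound argument), with constants chosen so that the close-case bound specializes \emph{exactly} to the far-case threshold on the separating circle.

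\textbf{Close cases.} For Part~\ref{izeta6}, exploit $j(z)=j(z+1)$ to reduce $|z-\zeta_3|\le 0.001$ to $|z'-\zeta_6|\le 0.001$ via $z'=z+1$. Writing $d=|z'-\zeta_6|$, estimate \eqref{ejzetanum} gives
$$
|j(z)| \;\ge\; |A_0|\,d^3 - 7.26\cdot 10^6\, d^4 \;=\; d^3\bigl(|A_0|-7.26\cdot 10^6\, d\bigr) \;>\; 30000\, d^3,
$$
since $|A_0|>45745$ and $d\le 10^{-3}$ force the parenthesized factor to exceed $38485$. At the boundary $d=10^{-3}$ this specializes to $|j(z)|>3\cdot 10^{-5}$, exactly the threshold of \eqref{ejfarfromzetasixandthree}. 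The same template handles Part~\ref{ii} via \eqref{ejinum}: $|A_1|-4.04\cdot 10^5\cdot 0.01>20787$ (since $|A_1|>24827$), giving $|j(z)-1728|>20000\,|z-i|^2$ for $|z-i|\le 0.01$, with specialization at $|z-i|=0.01$ producing the threshold value $2$ of \eqref{ejfarfromi}.

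\textbf{Far cases.} To extend these bounds over the entire far region, I would apply the minimum modulus principle: the function $j$ has no zeros in $\overline{\calF}$ outside $\{\zeta_6,\zeta_3\}$, and $j-1728$ vanishes in $\overline{\calF}$ only at $i$ (these being the ramification points of $j$ over $0$ and $1728$). Hence $1/j$ is holomorphic on $\calF$ minus the small disks about $\zeta_6,\zeta_3$, and analogously for $1/(j-1728)$. Truncating at $\Im z\le Y$ for $Y$ large, the maximum of $|1/j|$ is attained on the boundary, which decomposes into the small circles (close-case bound), the truncation line $\Im z=Y$ (on which the $q$-expansion via Lemma~\ref{ltriv} makes $|j|$ astronomically large), and the sides of $\calF$ — the vertical segments $\Re z=\pm 1/2$ (identified by periodicity, hence same $|j|$) and the arc of $|z|=1$ from $\zeta_6$ to $\zeta_3$ (where $j(-1/z)=j(z)$ forces $j$ to be real-valued).

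\textbf{Main obstacle.} The hard step is obtaining sharp enough lower bounds for $|j|$ and $|j-1728|$ on the sides of $\calF$ away from the elliptic points. Near $\zeta_6,\zeta_3,i$ the estimates of Corollary~\ref{coinangles} remain valid in discs of radius up to $0.19$ (resp.~$0.2$) and yield sharp bounds on the adjacent portions of the sides; here one exploits that $A_0$ is purely imaginary, so that $A_0(z-\zeta_6)^3$ points along the positive real axis as $z$ moves tangentially along the unit circle at $\zeta_6$. In the middle of the arc (near $i$), the reality and expected monotonicity of $j$ (ranging from $0$ to $1728$ and back) together with the expansion \eqref{ejinum} reduce the bound to a boundary check at the caps. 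On the moderate-height portion of the vertical sides, direct $q$-expansion estimates from Lemma~\ref{ltriv} suffice. Organizing these sub-regions and managing the slack so that the thresholds $3\cdot 10^{-5}$ and $2$ survive across every sub-region is essentially a careful verification, but it is the main source of computational difficulty in the proof.
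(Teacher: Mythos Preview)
Your overall architecture matches the paper's exactly: the close case via Corollary~\ref{coinangles}, the observation that the close-case bound specializes to the far-case threshold on the separating circle, and then the minimum-modulus/maximum principle to propagate the bound over the far region. The difference is only in how the boundary step is handled.

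What you label the ``main obstacle'' is in fact a one-line step in the paper. The point you are half-using but not fully exploiting is that~$j$ is real and \emph{monotone} on each boundary arc of~$\calF$: on the vertical ray $\Re z=1/2$, $y\ge\sqrt3/2$, one has $q=-e^{-2\pi y}$ real, so $j$ is real and runs monotonically from~$0$ at~$\zeta_6$ to~$-\infty$; on the half-arc $|z|=1$ from~$\zeta_6$ to~$i$, $j$ is real and runs monotonically from~$0$ to~$1728$ (both monotonicities are forced by the bijectivity of~$j$ on~$\calF$). Consequently, once the $\eps$-disks around $\zeta_6,\zeta_3$ are excised, the minimum of $|j|$ over the remaining boundary of~$\calF$ is attained precisely at the small circular arcs, where the close-case computation already gives $|j|\ge 3\cdot10^{-5}$. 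No sub-region decomposition, no further use of Corollary~\ref{coinangles} on larger disks, and no separate $q$-expansion estimates on the vertical sides are needed; the paper just invokes ``the known behavior of~$j$ on the boundary of~$\calF$''. The same monotonicity argument (now for $j-1728$, which on the boundary pieces runs through $(-\infty,0]$ and $[-1728,0]$ with its sole zero at~$i$) dispatches item~\ref{ii} equally quickly. Your plan would work, but recognizing this monotonicity collapses the ``careful verification'' to nothing.
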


\begin{proof}
When ${|z-\zeta_6|\le 0.001}$ we have
$$
|j(z)|\ge (A_0-7.26\cdot 10^6\cdot 0.001)|z-\zeta_6|^3>30000|z-\zeta_6|^3.
$$
Similarly, when ${|z-\zeta_3|\le 0.001}$ we have ${|j(z)|>30000|z-\zeta_3|^3}$.
In particular, if ${|z-\zeta_6|= 0.001}$  or ${|z-\zeta_3|= 0.001}$
then
$$
|j(z)|\ge 30000\cdot(0.001)^3=3\cdot10^{-5}.
$$
From the known behavior of~$j$ on the boundary of~$\calF$ we conclude that the estimate ${|j(z)|\ge 3\cdot10^{-5}}$ holds for every~$z$ on the boundary of the set
\begin{equation}
\label{efwithoutears}
\{z\in \calF: \text{ ${|z-\zeta_6|\ge 0.001}$ and ${|z-\zeta_3|\ge 0.001}$}\}.
\end{equation}
Since~$j$ does not vanish on the set~\eqref{efwithoutears}, the maximum principle implies that ${|j(z)|\ge 3\cdot10^{-5}}$ for every~$z$ in the set~\eqref{efwithoutears}. This proves item~\ref{izeta6}.

The proof of item~\ref{ii} is analogous.%: we prove that  ${|j(z)-1728|\ge 20000|z-i|^2}$ when ${|z-i|\le 0.01}$ and ${|j(z)-1728|\ge 2}$  when ${|z-i|\ge 0.01}$.
\end{proof}

Unfortunately, we cannot apply the same argument to~$j'$, because we do not have enough information on its behavior on the boundary of~$\calF$. However, this can be overcome using the following simple lemma. We use the classical notation
\begin{align*}
E_{2k}(z)= 1- \frac{4k}{B_{2k}}\sum_{n=1}^\infty \sigma_{2k-1}(n)q^n,\qquad
\Delta(z)=\frac{E_4(z)^3-E_6(z)^2}{1728},
\end{align*}
where ${\sigma_k(n)=\sum_{d\mid n}d^k}$ and~$B_k$ is the $k$th Bernoulli number. 

Note that here (and until the end of Section~\ref{ssglo}) the letter~$\Delta$ denotes the modular form~$\Delta(z)$, and not an imaginary quadratic discriminant (as in the rest of the article).

For more details the reader may consult any introductory course on modular forms, for instance, \cite[Sections~1.1,~1.2]{DS05}. (A warning:  $\Delta(z)$ in~\cite{DS05} corresponds to ${(2\pi)^{12}\Delta(z)}$ in our notation.)

%In particular,
%$$E_{4}(z)= 1+ 240\sum_{n=1}^\infty \sigma_{3}(n)q^n,\qquad E_{6}(z)= 1-504\sum_{n=1}^\infty \sigma_{5}(n)q^n.$$

\begin{lemma}
\label{lkuehne}
For any ${z\in \calH}$ we have
\begin{equation}
\label{ekuehne}
|j'(z)|\ge2\pi\min \bigl\{|j(z)|,|\Delta(z)|^{1/3}|j(z)|^{1/3}|j(z)-1728|\bigr\}.
\end{equation}
\end{lemma}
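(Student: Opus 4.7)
The plan is to reduce the estimate to the classical closed-form expression for $j'$ in terms of the Eisenstein series $E_4$, $E_6$ and the modular discriminant $\Delta$. Recall the Ramanujan differential equations
$$
\frac{1}{2\pi i}\frac{dE_4}{dz}=\frac{E_2E_4-E_6}{3},\qquad \frac{1}{2\pi i}\frac{d\Delta}{dz}=E_2\Delta.
$$
Differentiating $j=E_4^3/\Delta$ by the quotient rule, the $E_2$ contributions cancel, and one obtains the identity
$$
j'(z)=-2\pi i\,\frac{E_4(z)^2E_6(z)}{\Delta(z)}.
$$
I would state (and quickly verify) this identity as the first step.

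With this identity in hand, I would rewrite the whole inequality in terms of $|E_4|$, $|E_6|$, $|\Delta|$. From $j=E_4^3/\Delta$ and $j-1728=E_6^2/\Delta$ (the latter being equivalent to Jacobi's identity $E_4^3-E_6^2=1728\Delta$, which is exactly the definition of $\Delta$ given in the excerpt), we get
$$
|j(z)|=\frac{|E_4(z)|^3}{|\Delta(z)|},\qquad |j(z)-1728|=\frac{|E_6(z)|^2}{|\Delta(z)|}.
$$
Substituting these, the second quantity in the minimum on the right-hand side of \eqref{ekuehne} becomes $|\Delta|^{1/3}\cdot(|E_4|^3/|\Delta|)^{1/3}\cdot|E_6|^2/|\Delta|=|E_4||E_6|^2/|\Delta|$, while the first is $|E_4|^3/|\Delta|$; the left-hand side is $2\pi|E_4|^2|E_6|/|\Delta|$. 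After dividing by the common factor $2\pi/|\Delta|$, the inequality to prove collapses to
$$
|E_4|^2|E_6|\ge \min\bigl\{|E_4|^3,\ |E_4||E_6|^2\bigr\}=|E_4|\,\min\bigl\{|E_4|^2,|E_6|^2\bigr\}.
$$
Dividing once more by $|E_4|$, this reads $|E_4|\cdot|E_6|\ge\min\{|E_4|^2,|E_6|^2\}$, which is the trivial inequality $ab\ge\min\{a^2,b^2\}$ for non-negative reals: if $|E_4|\le|E_6|$ then $|E_4||E_6|\ge|E_4|^2$, and if $|E_6|\le|E_4|$ then $|E_4||E_6|\ge|E_6|^2$.

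The only real content of the lemma is the identity for $j'(z)$; the rest is a one-line algebraic manipulation plus a schoolbook inequality. There is no genuine obstacle: $\Delta(z)\ne0$ throughout~$\calH$ so the division is legitimate, and if $E_4(z)=0$ happens to hold the stated inequality is trivial since both sides then vanish (the left through the factor $E_4^2$ in $j'$, the right through $|j(z)|$ inside the minimum).
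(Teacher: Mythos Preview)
Your proof is correct and follows essentially the same route as the paper: both rely on the identity $j'=-2\pi i\,E_4^2E_6/\Delta$ together with $j=E_4^3/\Delta$ and $j-1728=E_6^2/\Delta$, and both finish with the trivial dichotomy $|E_4|\le|E_6|$ or $|E_6|\le|E_4|$. The paper phrases the last step as ``either $|E_6/E_4|\ge1$ or $|E_4/E_6|\ge1$'' applied to two separate lower bounds for $|j'|/2\pi$, whereas you collapse everything to $ab\ge\min\{a^2,b^2\}$; these are the same argument, and your handling of the case $E_4(z)=0$ is actually a bit more careful than the paper's.
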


\begin{proof}
We have
$$
j(z)=\frac{E_4(z)^3}{\Delta(z)}, \qquad j(z)-1728=\frac{E_6(z)^2}{\Delta(z)}.
$$
Furthermore,
$$
\frac1{2\pi i}j'(z)=-\frac{E_4(z)^2E_6(z)}{\Delta(z)}= - \frac{E_6(z)}{E_4(z)}j(z)= - \frac{E_4(z)^2}{E_6(z)}(j(z)-1728),
$$
see, for instance, \cite[page~775]{Ku14}.
Hence
\begin{align*}
\frac{|j'(z)|}{2\pi}&\ge \left|\frac{E_6(z)}{E_4(z)}\right||j(z)|, \\
\frac{|j'(z)|}{2\pi}&\ge\left|\frac{E_4(z)^2}{E_6(z)}\right||j(z)-1728|\\
&=\left|\frac{E_4(z)}{E_6(z)}\right||\Delta(z)|^{1/3}|j(z)|^{1/3}|j(z)-1728|.
\end{align*}
Since either ${|E_6(z)/E_4(z)|\ge 1}$ or ${|E_4(z)/E_6(z)|\ge 1}$, the result follows.
\end{proof}

\begin{remark}
In the neighborhoods of elliptic points one expects sharper lower bounds: there must be ${|j'(z)|\gg |j(z)|^{2/3}}$ near the elliptic points of type~$\zeta_6$, and  ${|j'(z)|\gg |j(z)-1728|^{1/2}}$ near the elliptic points of type~$i$.  This can be accomplished as well, see \cite[page~777]{Ku14}. However, for our purposes~\eqref{ekuehne} will be sufficient.
\end{remark}

\begin{proposition}
\label{pglobaljprime}
Let~$z$ belong to~$\calF$.
Then we have one of the following three options: either
\begin{equation}
\label{ejprimenearzetasixorthree}
\text{$\min\{|z-\zeta_6|,|z-\zeta_3|\}\le 0.001$ and $|j'(z)|\ge 10^5\min\{|z-\zeta_6|,|z-\zeta_3|\}^2$},
\end{equation}
or
\begin{equation}
\label{ejprimeneari}
\text{$|z-i|\le 0.01$ and $|j'(z)|\ge 40000|z-i|$},
\end{equation}
or
\begin{equation}
\label{ejprimefarfromzetasixandthreeandi}
\text{$\min\{|z-\zeta_6|,|z-\zeta_6|\}\ge 0.001$,  $|z-i|\ge 0.01$ and $|j'(z)|\ge 10^{-4}$}.
\end{equation}
\end{proposition}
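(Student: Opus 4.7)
The plan is to handle the three alternatives in the conclusion separately, according to the location of~$z$ in~$\calF$.

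The first two cases are essentially routine applications of Corollary~\ref{coinangles}. If ${\min\{|z-\zeta_6|,|z-\zeta_3|\}\le 0.001}$, then after translating by ${\pm 1}$ (which preserves~$j$ and~$j'$) I may assume ${|z-\zeta_6|\le 0.001}$. Estimate~\eqref{ejprimezetanum} and the reverse triangle inequality give
$$
|j'(z)|\ge 3|A_0||z-\zeta_6|^2 - 2.27\cdot 10^7\,|z-\zeta_6|^3 \ge |z-\zeta_6|^2\bigl(3|A_0|-2.27\cdot 10^4\bigr),
$$
which with ${|A_0|=45745.08\ldots}$ safely exceeds ${10^5|z-\zeta_6|^2}$, yielding~\eqref{ejprimenearzetasixorthree}. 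Similarly, if ${|z-i|\le 0.01}$, estimate~\eqref{ejprimeinum} gives ${|j'(z)|\ge |z-i|\bigl(2|A_1|-9100\bigr) > 40000|z-i|}$ using ${|A_1|=24827.56\ldots}$, yielding~\eqref{ejprimeneari}.

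The principal case is~\eqref{ejprimefarfromzetasixandthreeandi}. Here Proposition~\ref{pglobalj} yields ${|j(z)|\ge 3\cdot 10^{-5}}$ and ${|j(z)-1728|\ge 2}$. Feeding these into Lemma~\ref{lkuehne} gives
$$
|j'(z)|\ge 2\pi\min\bigl\{|j(z)|,\ |\Delta(z)|^{1/3}|j(z)|^{1/3}|j(z)-1728|\bigr\}.
$$
The first argument of the minimum already satisfies ${2\pi|j(z)|>10^{-4}}$, so only the second requires attention, and for it I need a lower bound on $|\Delta(z)|$. I plan to split on the size of $\Im z$. For ${\Im z\le Y}$ with a moderate threshold ${Y\approx 4}$, the product expansion ${\Delta(z)=q\prod_{n\ge 1}(1-q^n)^{24}}$ together with ${|q|\le e^{-\pi\sqrt 3}}$ on~$\calF$ gives an explicit ${|\Delta(z)|\ge C e^{-2\pi\Im z}\ge C e^{-2\pi Y}}$ with $C$ close to~$1$; combined with the lower bounds on $|j(z)|$ and $|j(z)-1728|$ this delivers ${|j'(z)|\ge 10^{-4}}$. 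For ${\Im z> Y}$, estimate~\eqref{ejprimesimplelower} with ${v=\Im z}$ gives ${|j'(z)|\ge ij'(i\Im z)\ge 2\pi\bigl(e^{2\pi Y}-\sum nc_n e^{-2\pi n Y}\bigr)}$, which hugely exceeds $10^{-4}$.

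The main obstacle will be the arithmetic calibration of~$Y$ in the third case: $Y$ must be small enough that ${C e^{-2\pi Y}}$ keeps the $|\Delta(z)|$-contribution to Lemma~\ref{lkuehne} above the required threshold, yet large enough (indeed, any moderate value of $Y$ suffices here) that the exponential bound on the ${\Im z>Y}$ branch dwarfs $10^{-4}$. The balancing is a tight but elementary numerical check, and everything else reduces to plugging the numerical values of $|A_0|$, $|A_1|$ and the other constants into the already-established Corollary~\ref{coinangles}, Proposition~\ref{pglobalj} and Lemma~\ref{lkuehne}.
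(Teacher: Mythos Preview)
Your proposal is correct and follows the same architecture as the paper: the two elliptic-point cases via Corollary~\ref{coinangles}, then a split on $\Im z$ handling large imaginary part by~\eqref{ejprimesimplelower} and small imaginary part by Lemma~\ref{lkuehne} combined with Proposition~\ref{pglobalj} and a product-expansion lower bound for $|\Delta(z)|$. The one point worth flagging is the threshold: the paper takes ${Y=1.01}$, which makes the $|\Delta(z)|^{1/3}$ bound very comfortable ($\ge e^{-2.16}$), whereas your suggested ${Y\approx 4}$ actually falls just short (one gets roughly ${8.7\cdot 10^{-5}}$ instead of $10^{-4}$)---so when you do the calibration you correctly anticipate, you will have to take~$Y$ noticeably smaller, after which the check is no longer tight at all.
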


\begin{proof}
The cases ${\min\{|z-\zeta_6|,|z-\zeta_3|\}\le 0.001}$ and ${|z-i|\le 0.01}$ are treated exactly as in the proof of Proposition~\ref{pglobalj}, using the corresponding instances of Corollary~\ref{coinangles}. When ${\Im z\ge 1.01}$ estimate~\eqref{ejprimesimplelower} gives
${|j'(z)|\ge ij'(1.01i)\ge 400}$,
which is much sharper than the wanted  ${|j'(z)|\ge 10^{-4}}$.

We are left with proving that ${|j'(z)|\ge 10^{-4}}$ in the case
$$
\min\{|z-\zeta_6|,|z-\zeta_6|\}\ge 0.001, \qquad |z-i|\ge 0.01, \qquad\Im z\le 1.01.
$$
Proposition~\ref{pglobalj} gives
\begin{equation}
\label{ecorprop}
|j(z)|\ge 3\cdot10^{-5}, \qquad |j(z)-1728|\ge 2.
\end{equation}
We want to apply Lemma~\ref{lkuehne}, and for this purpose we need a lower bound for $|\Delta(z)|^{1/3}$. This can be easily accomplished using the classical infinite product expansion
${\Delta(z)=q\prod_{n=1}^\infty(1-q^n)^{24}}$.
Using the inequality
$$
\log|1+t|\ge -\frac{|t|}{1-|t|}
$$
which holds true for all complex~$t$ satisfying ${|t|<1}$, we obtain
\begin{align*}
\log|\Delta(z)|^{1/3}\ge \frac13\log |q| -8\sum_{n=1}^\infty \frac{|q|^n}{1-|q|}=  \frac13\log |q| -8 \frac{|q|}{(1-|q|)^2}.
\end{align*}
Since ${z\in \calF}$ and ${\Im z \le 1.01}$,  we have ${e^{-2.02\pi}\le |q|\le e^{-\pi\sqrt3}}$, which gives the lower estimate
\begin{equation}
\label{ebigdeltalower}
\log|\Delta(z)|^{1/3}\ge -\frac{2.02\pi}3  -8 \frac{e^{-\pi\sqrt3}}{(1-e^{-\pi\sqrt3})^2}\ge -2.16.
\end{equation}

Now we are ready to apply Lemma~\ref{lkuehne}. Combining it with~\eqref{ecorprop} and~\eqref{ebigdeltalower}, we obtain
\begin{align*}
|j'(z)|\ge2\pi\min \bigl\{3\cdot10^{-5},e^{-2.16}\cdot(3\cdot10^{-5})^{1/3}\cdot2\bigr\} \ge 10^{-4},
\end{align*}
as wanted.
\end{proof}

%\section{Lower estimates for the difference ${|j(z)-j(w)|}$}
\section{Separating distinct $j$-values}
\label{sdistance}
In this section we bound from below the difference ${|j(z)-j(w)|}$, where~$z$ and~$w$ are distinct elements of the fundamental domain~$\calF$.

\begin{proposition}
\label{pbigimwcorrected}
Let ${z,w\in \calF}$ satisfy
${\Im w \ge \Im z}$ and ${\Im w \ge 1.3}$.
Then, there exists ${z'\in \{z,z+1,z-1\}}$ such that
\begin{equation}
\label{elower12}
|j(z)-j(w)|\ge e^{2\pi\Im w}\min \{0.4|z'-w|,0.05\}.
\end{equation}
\end{proposition}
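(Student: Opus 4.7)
The plan is to exploit the Fourier expansion $j(z) = q_z^{-1} + 744 + j_0(z)$ and treat $q_{z'}^{-1} - q_w^{-1}$ as the main term. First, using the $\mathbb{Z}$-periodicity of~$j$, I choose $z' \in \{z-1, z, z+1\}$ with $|\Re(z' - w)| \leq 1/2$; this preserves both $j(z)$ and $q_z$. Setting $\zeta := 2\pi i(w - z')$, the hypothesis $\Im w \geq \Im z'$ together with the choice of $z'$ give $\Re \zeta \leq 0$ and $|\Im \zeta| \leq \pi$, which are exactly the conditions of the second assertion of Lemma~\ref{lexp}. The key identity is
$$
j(z) - j(w) = q_w^{-1}(e^\zeta - 1) + \bigl(j_0(z') - j_0(w)\bigr),
$$
in which the leading term has magnitude $e^{2\pi \Im w}|e^\zeta - 1|$, and the tail $j_0(z') - j_0(w)$ is the error to be controlled.

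Write $y = \Im z$ and $v = \Im w \geq 1.3$, and split into three cases. \emph{Case $y \leq 1$.} Here $|\zeta| \geq 2\pi(v - y) \geq 0.6\pi > 1/2$, but rather than using the Fourier-based error estimate I compare $|j|$'s directly: by Lemma~\ref{ltriv} and the fact that $y \mapsto j(iy)$ is decreasing on $(0,1]$, $|j(z)| \leq j(iy) \leq j(i\sqrt{3}/2) < 2314$, while $|j(w)| \geq e^{2\pi v} - 744 - j_0(i \cdot 1.3) > e^{2\pi v} - 802$ by the triangle inequality applied to the Fourier expansion of $j(w)$. Subtracting gives $|j(z) - j(w)| \geq e^{2\pi v} - 3116 \geq 0.05\,e^{2\pi v}$ for $v \geq 1.3$. \emph{Case $y > 1$, $|\zeta| \leq 1/2$.} The first assertion of Lemma~\ref{lexp} gives $|e^\zeta - 1| \geq (4\pi/3)|z' - w|$, and Lemma~\ref{llagrange} combined with Lemma~\ref{ltriv} (valid since $\Im \xi \geq y > 1$ along $[z', w]$) gives $|j_0(z') - j_0(w)| \leq |z' - w| \cdot (1/i) j_0'(i)$, with $(1/i) j_0'(i) < 3500$ by a direct summation; hence $|j(z) - j(w)| \geq \bigl((4\pi/3)\,e^{2\pi v} - 3500\bigr)|z' - w| \geq 0.4\,e^{2\pi v}|z' - w|$ once $v \geq 1.3$. \emph{Case $y > 1$, $|\zeta| > 1/2$.} The second assertion of Lemma~\ref{lexp} gives $|e^\zeta - 1| \geq 0.27$, and the triangle bound $|j_0(z') - j_0(w)| \leq j_0(i) + j_0(i \cdot 1.3) < 506$ yields $|j(z) - j(w)| \geq 0.27\,e^{2\pi v} - 506 \geq 0.05\,e^{2\pi v}$.

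The hard part is the fragile matching of numerical constants at the lower boundary $v = 1.3$. A uniform approach using only the Fourier-based bound $|j_0(z')| \leq j_0(iy)$ fails for small $y$, since $j_0(i\sqrt{3}/2) \approx 1335$ overwhelms the available slack $0.22\,e^{2.6\pi} \approx 776$ in the third case. The remedy is to treat the regime $\Im z \leq 1$ separately by a direct comparison of $|j(z)|$ and $|j(w)|$: on the compact strip $\mathcal{F} \cap \{\Im z \leq 1\}$ the modulus $|j(z)|$ is bounded by an explicit absolute constant, and this bound correctly reflects the partial cancellation between $q_z^{-1}$ and $j_0(z)$ that the naive estimate $|j_0(z)| \leq j_0(iy)$ cannot capture.
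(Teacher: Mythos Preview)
Your proof is correct and follows essentially the same route as the paper's: decompose $j(z)-j(w)$ as $q_w^{-1}(e^\zeta-1)+(j_0(z')-j_0(w))$, apply Lemma~\ref{lexp} to the main term, and control the $j_0$-tail. The only cosmetic differences are that the paper splits at $\Im z=1.2$ rather than $\Im z=1$, and in the small-$\Im z$ regime the paper keeps the Fourier decomposition (bounding $|q_w^{-1}-q_z^{-1}|\ge 0.45\,e^{2\pi v}$ from the gap $v-y\ge 0.1$) rather than comparing $|j(w)|$ and $|j(z)|$ directly; the numerics come out slightly different but both close at $v=1.3$.
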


\begin{proof}
We have
\begin{equation}
\label{emaindif}
|j(z)-j(w)|\ge \bigl|q_w^{-1}-q_z^{-1}\bigr|-|j_0(w)-j_0(z)|.
\end{equation}
%The rest of the proof splits into three cases.
Assume first that ${\Im z \le 1.2}$. In this case
\begin{equation*}
%\label{eqzfirst}
|q_w^{-1}-q_z^{-1}| \ge e^{2\pi v}(1-e^{-2\pi\cdot 0.1}) \ge 0.45e^{2\pi v},
\end{equation*}
where ${v=\Im w}$. On the other hand, since ${z\in \calF}$, we have ${\Im z \ge \sqrt3/2}$. Using~\eqref{ejm} and our assumption ${v\ge 1.3}$, we find
\begin{equation*}
%\label{edifjzerofirst}
|j_0(w)-j_0(z)|\le j_0\left(\frac{\sqrt3}2i\right)+j_0(1.3i)  \le 1400 \le 0.4e^{2\pi v}.
\end{equation*}
Substituting  these two estimates to~\eqref{emaindif}, we deduce that
$$
|j(z)-j(w)|\ge  0.05e^{2\pi v}, 
$$
which completes the proof in the case ${\Im z \le 1.2}$.

\medskip

From now on we assume that ${\Im z \ge 1.2}$. Let us first estimate from above the difference ${|j_0(w)-j_0(z)|}$. Under the condition ${\Im z\ge 1.2}$, we have
$$
|j_0'(z)|\le \frac1i j_0'(1.2i) <800,
$$
see~\eqref{ejprimem}. Hence
\begin{equation*}
|j_0(w)-j_0(z)|\le 800|w-z|,
\end{equation*}
see Lemma~\ref{llagrange}. Since ${j_0(z\pm1)=j_0(z)}$, we may replace here~$z$ by ${z\pm1}$ and obtain similar inequalities with ${|w-(z\pm1)|}$ on the right. This proves that
\begin{equation*}
%\label{edifjzerorho}
|j_0(w)-j_0(z)|\le 800|w-z'|
\end{equation*}
for every ${z'\in \{z,z-1,z+1\}}$.
In addition to this, using~\eqref{ejm}, we find
\begin{equation*}
%\label{edifjzero}
|j_0(w)-j_0(z)|\le j_0(1.3i)+j_0(1.2i) <200.
\end{equation*}
Since ${v\ge 1.3}$, we proved that 
\begin{align}
|j_0(w)-j_0(z)| &\le \min \{200, 800|w-z'|\}\nonumber\\
\label{edifmin}
&\le e^{2\pi v}\min\{0.06, 0.23|w-z'|\}
\end{align}
for every ${z'\in \{z,z-1,z+1\}}$.

Now let us estimate the difference ${|q_z^{-1}-q_w^{-1}|}$ from below. There exists a unique ${z'\in \{z,z-1,z+1\}}$ such that ${|\Re(z'-w)|\le 1/2}$, and we maintain this choice of~$z'$ in the sequel.
We have clearly
$$
|q_z^{-1}-q_w^{-1}|=e^{2\pi v}|e^{2\pi i(w-z')}-1|.
$$
Our assumption ${\Im w\ge\Im z}$ implies that
$$
\Re(2\pi i(w-z')) \le 0,
$$
and the choice of~$z'$ implies that ${|\Re (w-z')|\le 1/2}$, which can be re-written as
$$
|\Im(2\pi i(w-z'))|\le \pi.
$$
Now we want to apply Lemma~\ref{lexp} with ${2\pi i(w-z')}$ as~$z$. %Using the assumption ${v\ge 1.3}$ and
Applying~\eqref{eexphalf},~\eqref{eexplarge}, we obtain
\begin{equation}
\label{esmallcase}
|q_z^{-1}-q_w^{-1}|\ge \frac23 e^{2\pi v}|w-z'| ,
\end{equation}
if ${|w-z'|\le 1/(4\pi)}$, and
\begin{equation}
\label{ebigcase}
|q_z^{-1}-q_w^{-1}|\ge 0.27 e^{2\pi v},
\end{equation}
if ${|w-z'|\ge 1/(4\pi)}$. %(We use the assumption ${v\ge 1.3}$.) 
Combining these estimates with~\eqref{emaindif} and~\eqref{edifmin}, we obtain
 \begin{equation*}
%\label{ealmostrho}
|j(z)-j(w)|\ge
\begin{cases}
0.4 e^{2\pi v}|w-z'| , &|w-z'|\le 1/(4\pi),\\
0.2 e^{2\pi v}, &|w-z'|\ge 1/(4\pi),
\end{cases}
\end{equation*}
 sharper than~\eqref{elower12}.
\end{proof}

Given ${S\subset \C}$ and ${\eps>0}$, we define the  $\eps$-neighborhood of~$S$ as the set of all ${z\in \C}$ such that ${|z-w|<\eps}$ for some ${w\in S}$.

\begin{proposition}
\label{pseparjvalues}
Assume that ${z,w\in \calF}$ and ${\Im w\le 1.3}$. Then, there exists~$z'$ in the ${10^{-5}}$-neighborhood of~$\calF$ such that ${j(z')=j(z)}$ and
\begin{equation}
\label{eseparjvalues}
|j(z)-j(w)|\ge  \min\bigl\{5\cdot10^{-7},5\cdot10^{-7}|j'(w)|^2, 0.6|j'(w)||z'-w|\bigr\}.
\end{equation}
\end{proposition}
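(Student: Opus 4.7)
The plan is to apply the effective inverse function theorem (Lemma~\ref{lrouche}) to the $j$-function at the base point $a=w$, with $A=j'(w)$, and to convert this into the desired lower bound via a two-case argument. The setup uses an absolute radius $R$ (to be tuned, expected to lie between roughly $0.05$ and $0.2$). Since $w\in\calF$ and $\sqrt3/2\le\Im w\le 1.3$, the closed disk $|z-w|\le R$ sits in a horizontal strip $\alpha\le\Im z\le\beta$ with $\alpha,\beta$ absolute, so Corollary~\ref{cobetween} yields an explicit upper bound $M$ for $|j'|$ on the disk. Combined with Lemma~\ref{llagrange} this supplies $B:=RM$ as a valid ``$B$'' in the notation of Lemma~\ref{lrouche}, whence $C=|A|/R+B/R^2$ is an affine function of $|A|$ with absolute coefficients.

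Set $r:=\min\{10^{-5},\,R,\,|A|/(3C)\}$. The cap $r\le 10^{-5}$ guarantees that any point of the disk $|z-w|\le r$ lies in the $10^{-5}$-neighborhood of~$\calF$, since $w\in\calF$. Now I split on the size of $|j(z)-j(w)|$. In Case~A, when $|j(z)-j(w)|\le|A|r/2$, Lemma~\ref{lrouche} produces a unique $z'$ in this disk with $j(z')=j(z)$; applying Lemma~\ref{lschw} to the function $\zeta\mapsto j(\zeta)-j(w)-A(\zeta-w)$ (which vanishes to order at least two at~$w$) gives $|j(z)-j(w)-A(z'-w)|\le C|z'-w|^2\le\tfrac{1}{3}|A||z'-w|$, and the reverse triangle inequality delivers
\[
|j(z)-j(w)|\ge\tfrac{2}{3}|A||z'-w|>0.6|j'(w)||z'-w|,
\]
the third term of the minimum. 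In Case~B, when $|j(z)-j(w)|>|A|r/2$, I substitute the three possible values defining $r$ and check that $|A|r/2\ge\min\{5\cdot10^{-7},\,5\cdot10^{-7}|A|^2\}$: for $r=10^{-5}$ this reduces to $5\cdot10^{-6}|A|\ge 5\cdot 10^{-7}$ when $|A|\ge 0.1$ and to $5\cdot10^{-6}|A|\ge 5\cdot10^{-7}|A|^2$ when $|A|\le 10$, which together cover every~$|A|$; for $r=|A|/(3C)$ the inequality $|A|r/2=|A|^2/(6C)\ge 5\cdot10^{-7}|A|^2$ amounts to $C\le 1/(3\cdot 10^{-6})\approx 3.3\cdot10^5$; the subcase $r=R$ is handled in the same spirit. (If the $z'$ of Lemma~\ref{lrouche} is not at hand in Case~B, one simply takes $z'=z$ to witness the existence clause of the statement, as $z\in\calF$.)

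The main obstacle is the tuning in Case~B: choosing~$R$ small enough to keep~$M$ (hence~$B$) under control, but large enough for $B/R^2$ to fall below the threshold $1/(6\cdot 5\cdot 10^{-7})$, so that the specific constants in the statement are actually attained. This is tightest when $w$ is close to one of the elliptic points $i$, $\zeta_6$, $\zeta_3$, where $|j'(w)|$ is small and the second term $5\cdot10^{-7}|j'(w)|^2$ of the minimum does the real work; a clean argument may well require a separate treatment of a neighborhood of each elliptic point (using the sharper local expansions of Corollary~\ref{coinangles} in place of Lemma~\ref{lrouche}) together with the ``bulk'' region where Proposition~\ref{pglobaljprime} already provides $|j'(w)|\ge 10^{-4}$.
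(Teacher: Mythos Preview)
Your approach is essentially the one the paper uses: apply the effective inverse function theorem (Lemma~\ref{lrouche}) at~$w$, split into the ``large'' case $|j(z)-j(w)|\ge |A|r/2$ and the ``small'' case where a preimage~$z'$ exists, and in the small case use Lemma~\ref{lschw} to get the $\tfrac23|j'(w)||z'-w|$ bound. The paper's proof is organized exactly this way.

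The one substantive difference is how you obtain the bound~$B$ for $|j(\xi)-j(w)|$ on the disk. You bound $|j'|$ by~$M$ via Corollary~\ref{cobetween} (the~$g$-part) and then set $B=RM$ via Lemma~\ref{llagrange}. The paper instead bounds $|j|$ itself by $B=\max\{f(\sqrt3/2-R),f(1.3+R)\}$ using the $f$-part of Corollary~\ref{cobetween}. This is not cosmetic: the $g$-bound degrades badly as the disk dips below $\Im=\sqrt3/2$ (the series $\sum nc_n e^{-2\pi ny}$ converges very slowly there), so your $M/R$ tends to overshoot the threshold $1/(3\cdot10^{-6})$ for any reasonable~$R$. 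With the $f$-bound the paper simply takes $R=0.2$, computes $10^{-6}<R^2/(3(B+R))<10^{-5}$, and the constants $5\cdot10^{-7}$ and the $10^{-5}$-neighborhood fall out directly. In particular, no separate treatment near the elliptic points is needed, contrary to what you anticipate at the end; the $|j'(w)|^2$ term in the minimum already absorbs the degeneracy of~$A$ there once~$B$ is controlled through~$f$ rather than~$g$.
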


\begin{proof}
Let~$R$ be a constant (to be specified later)  satisfying ${0<R<\sqrt{3}/2}$. Set
\begin{align*}
B&:=\max\bigl\{f({\sqrt3}/2-R),f(1.3+R)\bigr\},%\nonumber
\\
% C&=|j'(w)|/{R}+{B}/{R^2},\\
%\label{edefinitionofr}
r&:=\min \left\{R,\frac{|j'(w)|R^2}{3(B+R)}, \frac{R^2}{3(B+R)}\right\},
\end{align*}
where, as before, ${f(y)=j(iy)}$. % is defined in~\eqref{edeffg}. 
Since ${\sqrt3/2\le \Im w \le 1.3}$, 
Corollary~\ref{cobetween} implies that every~$\xi$ in the disk ${|\xi-w|\le R}$ satisfies ${|j(\xi)|\le B}$.

We will now use Lemma~\ref{lrouche} with~$j$ as~$f$, with~$w$ as~$a$ and with~$j(z)$ as~$w$. Condition~\eqref{eassumr} translates into
\begin{equation}
\label{eassumrverified}
r\le \min \left\{R, \frac{|j'(w)|}{3(|j'(w)|/R+B/R^2)}\right\}.
\end{equation}
We have clearly
$$
\frac{|j'(w)|}{3(|j'(w)|/R+B/R^2)} \ge \frac{\min\{|j'(w)|,1\}R^2}{3(B+R)}.
$$
Hence~\eqref{eassumrverified} holds true by our definition of~$r$.

Lemma~\ref{lrouche} implies  that there are two possibilities: either
\begin{equation}
\label{ebigdistance}
|j(z)-j(w)|\ge \frac12r|j'(w)|,
\end{equation}
or there exists ${z'\in  \calH}$ such that ${j(z')=j(z)}$ and ${|z'-w|\le r}$. In the latter case
Lemma~\ref{lschw} implies that
\begin{align*}
|j(z')-j(w)-j'(w)(z'-w)|&\le (|j'(w)|/R+B/R^2)|z'-w|^2 \\
&\le (|j'(w)|/R+B/R^2)r|z'-w|.
\end{align*}
Using~\eqref{eassumrverified}, we find that
$$
(|j'(w)|/R+B/R^2)r \le \frac13|j'(w)|,
$$
which implies that
\begin{equation}
\label{esmalldistance}
|j(z')-j(w)|\ge \frac23|j'(w)||z'-w|.
\end{equation}
%Thus, we have either~\eqref{ebigdistance} or~\eqref{esmalldistance}.

Thus, we have either~\eqref{ebigdistance} or~\eqref{esmalldistance}. Setting a ``nearly optimal''  ${R=0.2}$, we obtain
$$
10^{-5}>\frac{R^2}{3(B+R)} > 10^{-6};
$$
in particular,
$$
10^{-5}>r>10^{-6}\min \{1,|j'(w)|\}.
$$
Hence~\eqref{ebigdistance} implies that
\begin{equation}
\label{ebigdfinal}
|j(z)-j(w)|\ge 5\cdot10^{-7}\min \{1,|j'(w)|^2\}.
\end{equation}
Thus, we have either~\eqref{ebigdfinal} or~\eqref{esmalldistance}, which proves~\eqref{eseparjvalues} with  our choice of~$z'$. Finally, we note that ${|z'-w|\le r<10^{-5}}$, which shows that~$z'$ belongs to the $10^{-5}$-neighborhood of~$\calF$.
\end{proof}

%%%%%%%%%%%%%%%%%%%%%%%%%%%%%%%%%%%%%%%%%%%%%%%%%%%%%%%%%%%%%%%%

\section{Separating imaginary quadratic numbers}
\label{ssimag}
Call a complex number \textsl{imaginary quadratic}  if it is algebraic of degree~$2$ over~$\Q$ and does not belong to~$\R$. By the \textsl{discriminant} of an imaginary quadratic number we mean the discriminant of its minimal polynomial over~$\Z$. %The \textsl{fundamental discriminant} of an imaginary quadratic number is the discriminant of the imaginary quadratic number field it generates. If~$\alpha$ is a imaginary quadratic number of discriminant~$\Delta$ and fundamental discriminant~$D$ then ${\Delta=Df^2}$, where~$f$ is  a positive integer called the \textsl{conductor} of~$\alpha$.

We want to bound from below the distance between two imaginary quadratic numbers. Of course, this can be done using the ``Liouville inequality'': if~$\alpha$ and~$\alpha'$ are distinct complex algebraic numbers then ${|\alpha-\alpha'|\ge \bigl(2H(\alpha)H(\alpha')\bigr)^{-d}}$, where $H(\cdot)$ is the absolute (multiplicative) height and ${d=[\Q(\alpha,\alpha'):\Q]}$. However, for imaginary quadratic numbers finer bounds can be proved.

\begin{proposition}
\label{pseparquad}
Let $\alpha,\alpha'$ be two distinct imaginary quadratic numbers with positive imaginary parts, and let $\Delta,\Delta'$ be their respective discriminants. Then
\begin{equation}
\label{esepquar}
|\alpha-\alpha'|\ge
\begin{cases}
\frac{2\Im\alpha\Im\alpha'\min\{\Im\alpha,\Im\alpha'\}}{|\Delta||\Delta'|}, & \text{if}\quad  \Im\alpha \ne \Im\alpha',\\
\frac{\Im\alpha}{|\Delta|^{1/4}|\Delta'|^{1/4}}, & \text{if}\quad  \Im\alpha = \Im\alpha'.
\end{cases}
\end{equation}
\end{proposition}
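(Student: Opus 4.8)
The plan is to write each imaginary quadratic number explicitly in terms of its discriminant and the integer coefficients of its minimal polynomial, and then estimate the difference by hand, splitting according to whether the imaginary parts coincide. Write ${\alpha=(-b+\sqrt{\Delta})/(2a)}$ with ${b^2-4ac=\Delta<0}$, so that ${\Im\alpha=\sqrt{|\Delta|}/(2a)}$ and, crucially, ${a=\sqrt{|\Delta|}/(2\Im\alpha)}$; similarly ${\alpha'=(-b'+\sqrt{\Delta'})/(2a')}$ with ${a'=\sqrt{|\Delta'|}/(2\Im\alpha')}$. Here $a,a'$ are positive integers (we may normalize the minimal polynomials to have positive leading coefficient). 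The key point that makes finer-than-Liouville bounds possible is that these leading coefficients are controlled by $|\Delta|$ and the imaginary parts, rather than by the full height.

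First I would treat the case ${\Im\alpha\neq\Im\alpha'}$. Here the imaginary parts of $\alpha$ and $\alpha'$ differ, so ${|\alpha-\alpha'|\ge|\Im\alpha-\Im\alpha'|}$. Now ${\Im\alpha-\Im\alpha'=\sqrt{|\Delta|}/(2a)-\sqrt{|\Delta'|}/(2a')}$. The quantities ${2a\Im\alpha=\sqrt{|\Delta|}}$ and ${2a'\Im\alpha'=\sqrt{|\Delta'|}}$ need not be rational, so to get a clean denominator I would instead consider the product ${\Im\alpha\cdot\Im\alpha'\cdot|\Im\alpha-\Im\alpha'|}$ or, more directly, clear denominators: ${aa'(\Im\alpha-\Im\alpha')}$ is a difference of the two numbers ${a'\sqrt{|\Delta|}/2}$ and ${a\sqrt{|\Delta'|}/2}$; squaring, the quantity ${(2aa')^2(\Im\alpha-\Im\alpha')^2 = a'^2|\Delta|+a^2|\Delta'| - 2aa'\sqrt{|\Delta||\Delta'|}}$ has the property that ${a'^2|\Delta|+a^2|\Delta'|}$ is a positive integer, while $4a^2a'^2|\Delta||\Delta'|$ is also a positive integer; so ${\bigl(a'^2|\Delta|+a^2|\Delta'|\bigr)^2 - 4a^2a'^2|\Delta||\Delta'|}$ is a non-negative integer, and it is nonzero because ${\Im\alpha\ne\Im\alpha'}$, hence ${\ge 1}$. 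Factoring the left side as a difference of squares and bounding the (positive) cofactor ${a'^2|\Delta|+a^2|\Delta'|+2aa'\sqrt{|\Delta||\Delta'|} = (a'\sqrt{|\Delta|}+a\sqrt{|\Delta'|})^2 \le (2\sqrt{|\Delta||\Delta'|}\cdot\text{something})}$ from above in terms of $|\Delta|,|\Delta'|$ and the imaginary parts — using ${a=\sqrt{|\Delta|}/(2\Im\alpha)}$, ${a'=\sqrt{|\Delta'|}/(2\Im\alpha')}$ — yields ${|\Im\alpha-\Im\alpha'|\ge \dfrac{2\Im\alpha\,\Im\alpha'\min\{\Im\alpha,\Im\alpha'\}}{|\Delta||\Delta'|}}$ after routine simplification, which is the desired bound.

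Next I would treat the case ${\Im\alpha=\Im\alpha'}$. Then the difference ${\alpha-\alpha'}$ is purely real, equal to ${\Re\alpha-\Re\alpha' = (b'/a'-b/a)/2 = (ab'-a'b)/(2aa')}$. Since ${\Im\alpha=\Im\alpha'}$ forces ${|\Delta|/a^2=|\Delta'|/a'^2}$, i.e. ${a'\sqrt{|\Delta|}=a\sqrt{|\Delta'|}}$, the two numbers $\alpha,\alpha'$ being distinct forces ${\Re\alpha\neq\Re\alpha'}$, so ${ab'-a'b}$ is a nonzero integer, hence ${|\alpha-\alpha'|\ge 1/(2aa')}$. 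Finally substitute ${a=\sqrt{|\Delta|}/(2\Im\alpha)}$ and ${a'=\sqrt{|\Delta'|}/(2\Im\alpha')=\sqrt{|\Delta'|}/(2\Im\alpha)}$ (using ${\Im\alpha=\Im\alpha'}$) to get ${2aa'=\sqrt{|\Delta||\Delta'|}/(2\Im\alpha^2)}$, hence ${|\alpha-\alpha'|\ge 2\Im\alpha^2/\sqrt{|\Delta||\Delta'|}}$. Comparing with the claimed ${\Im\alpha/(|\Delta|^{1/4}|\Delta'|^{1/4})}$, the two agree up to the factor ${2\Im\alpha^{3/2}/(|\Delta|^{1/4}|\Delta'|^{1/4})}$ versus ${\Im\alpha}$; I would conclude by invoking the standard fact that an imaginary quadratic number in (or reducible to) the fundamental domain has imaginary part ${\ge\sqrt{|\Delta|}/(2|\Delta|^{\text{??}})}$ — more precisely ${2\Im\alpha\ge\sqrt{3}\cdot(\text{lower bound on }\sqrt{|\Delta|}/a)}$ — wait, I should instead simply use that ${2a\Im\alpha=\sqrt{|\Delta|}}$ together with ${a\ge 1}$, giving ${\Im\alpha\le\sqrt{|\Delta|}/2}$ and (if the minimal polynomial is reduced) ${\Im\alpha\ge\sqrt{|\Delta|}/(2a)}$ with ${a\le\sqrt{|\Delta|/3}}$, hence ${\Im\alpha\ge\sqrt{3}/2}$; combined with ${2\Im\alpha^2\ge\sqrt{3}\,\Im\alpha}$ this recovers a bound of the stated form. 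The main obstacle I anticipate is bookkeeping in the case ${\Im\alpha\ne\Im\alpha'}$: making the integrality argument genuinely produce a clean ${\ge 1}$ without hidden denominators (the cross term $\sqrt{|\Delta||\Delta'|}$ is irrational in general), and then tracking the arithmetic through the difference-of-squares factorization so that the upper bound on the cofactor lands exactly on ${|\Delta||\Delta'|}$ with the constant $2$ and the ${\min\{\Im\alpha,\Im\alpha'\}}$ in the right places rather than something weaker.
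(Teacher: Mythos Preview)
Your treatment of the case $\Im\alpha\ne\Im\alpha'$ is essentially the paper's argument: rationalising $a'\sqrt{|\Delta|}-a\sqrt{|\Delta'|}$ (which is what your ``square and factor as a difference of squares'' manoeuvre amounts to) produces the nonzero integer $(a')^2|\Delta|-a^2|\Delta'|$, and then substituting $a=\sqrt{|\Delta|}/(2\Im\alpha)$, $a'=\sqrt{|\Delta'|}/(2\Im\alpha')$ gives exactly the stated bound.

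The case $\Im\alpha=\Im\alpha'$, however, has a real gap. Your bound $|ab'-a'b|\ge 1$ yields
\[
|\alpha-\alpha'|\ge\frac{1}{2aa'}=\frac{2(\Im\alpha)^2}{\sqrt{|\Delta||\Delta'|}},
\]
which falls short of the claimed $\Im\alpha/(|\Delta||\Delta'|)^{1/4}$ by a factor of $1/\sqrt{aa'}$ (use $a'\sqrt{|\Delta|}=a\sqrt{|\Delta'|}$ to check this). Your proposed rescue via $\Im\alpha\ge\sqrt3/2$ is not available: the proposition does not assume the numbers lie in the fundamental domain, and even if it did, the resulting bound $\sqrt3\,\Im\alpha/\sqrt{|\Delta||\Delta'|}$ is still weaker than the claim as soon as $|\Delta||\Delta'|>9$.

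The missing idea is arithmetic, not analytic. From $\Im\alpha=\Im\alpha'$ one deduces that $\alpha$ and $\alpha'$ lie in the same imaginary quadratic field, so $\Delta=Df^2$ and $\Delta'=D(f')^2$ with a common fundamental discriminant~$D$; moreover $af'=a'f$, so $a=A\,(f/e)$ and $a'=A\,(f'/e)$ with $e=\gcd(f,f')$. The relation $b^2=\Delta+4ac$ then forces $(f/e)\mid b^2$, so one may write $b/(f/e)=b_1/f_1$ with $f_1\le (f/e)^{1/2}$, and similarly for $b'$. This square-root saving in the denominator is exactly what upgrades $1/(2aa')$ to
\[
\frac{1}{2A\,(f/e)^{1/2}(f'/e)^{1/2}}=\frac{\Im\alpha}{|\Delta|^{1/4}|\Delta'|^{1/4}}.
\]
Without exploiting this divisibility you cannot reach the stated exponent $1/4$.
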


\begin{proof}
Let ${aX^2+bX+c\in \Z[X]}$ be the minimal polynomial of~$\alpha$ over~$\C$ with ${a>0}$. Then
$$
\Delta=b^2-4ac, \qquad \alpha=\frac{-b+|\Delta|^{1/2}i}{2a} =\beta+\delta i,
$$
with
$$
\beta=-\frac b{2a}=\Re \alpha, \qquad \delta=\frac{|\Delta|^{1/2}}{2a}=\Im \alpha.
$$
Similarly, if ${a'X^2+b'X+c'\in \Z[X]}$ is the minimal polynomial of~$\alpha'$ over~$\Z$, then 
$$
\alpha'=\frac{-b'+|\Delta'|^{1/2}i}{2a'} =\beta'+\delta' i.
$$

If ${\delta\ne \delta'}$ then
\begin{align*}
|\alpha-\alpha'|&\ge |\delta-\delta'|\\
&= \frac{\bigl|(a')^2|\Delta|-a^2|\Delta'|\bigr|}{2aa'(a'|\Delta|^{1/2}+a|\Delta'|^{1/2})}\\
&\ge \frac{1}{2aa'(a'|\Delta|^{1/2}+a|\Delta'|^{1/2})}\\
&= \frac{4(\delta\delta')^2}{|\Delta||\Delta'|(\delta+\delta')}\\
&\ge \frac{2\delta\delta'\min\{\delta,\delta'\}}{|\Delta||\Delta'|},
\end{align*}
which proves~\eqref{esepquar} in the case ${\Im\alpha\ne \Im\alpha'}$.

Now assume that ${\delta =\delta'}$. In this case~$\alpha$ and~$\alpha'$ generate the same imaginary quadratic field:
$$
\Q(\alpha)=\Q(\sqrt\Delta)=\Q(\alpha')=\Q(\sqrt{\Delta'}).
$$
Denote by~$D$ the discriminant of this field. Then ${\Delta=Df^2}$, ${\Delta'=D(f')^2}$ with some positive integers~$f$ and~$f'$.

Denote ${e=\gcd(f,f')}$.
Since ${\delta =\delta'}$, we have ${af'=a'f}$, and
$$
a=A\frac fe, \qquad a'=A \frac{f'}e, \qquad \delta=\delta'= \frac{e|D|^{1/2}}{2A}
$$
with some ${A\in \Z}$. Furthermore, the relation ${\Delta=b^2-4ac}$ implies that ${(f/e)\mid b^2}$. Hence
${b/(f/e)=b_1/f_1}$, where $b_1,f_1$ are  integers such that ${0<f_1\le (f/e)^{1/2}}$. Similarly, ${b'/(f'/e)=b'_1/f'_1}$, where ${0<f'_1\le (f'/e)^{1/2}}$. Using all this, we obtain 
\begin{align*}
|\alpha-\alpha'|&=|\beta-\beta'|
=\frac{|b_1f'_1-b'_1f_1|}{2Af_1f'_1}\ge \frac{1}{2A(f/e)^{1/2}(f'/e)^{1/2}}= \frac{\delta}{|\Delta|^{1/4}|\Delta'|^{1/4}},
\end{align*}
which proves~\eqref{esepquar} in the case ${\Im\alpha= \Im\alpha'}$.
\end{proof}

\begin{remark}
If ${\Delta=\Delta'}$, then we have the sharper estimate
\begin{equation}
\label{esepquarconj}
|\alpha-\alpha'|\ge
\begin{cases}
\frac{\Im\alpha\Im\alpha'}{2|\Delta|^{1/2}}, & \text{if}\quad \Im\alpha \ne \Im\alpha',\\
\frac{\Im\alpha}{|\Delta|^{1/2}}, & \text{if}\quad  \Im\alpha = \Im\alpha'.
\end{cases}
\end{equation}
Indeed, if ${\delta=\delta'}$ then~\eqref{esepquarconj} follows from~\eqref{esepquar}. On the other hand, if ${\delta\ne \delta'}$, then ${a\ne a'}$, and we obtain
$$
|\alpha-\alpha'|\ge |\delta-\delta'|= \frac{|a-a'||\Delta|^{1/2}}{2aa'}\ge \frac{|\Delta|^{1/2}}{2aa'}= \frac{|\delta||\delta'|}{2|\Delta|^{1/2}}.
$$
Unfortunately, we were not able to profit from~\eqref{esepquarconj} to refine Theorem~\ref{thseparweak} in the (apparently, most important) special case ${\Delta_x=\Delta_y}$. The reason is that in the proof of Theorem~\ref{thsepar} below, we are going to combine Proposition~\ref{pseparquad} with the lower bound~\eqref{eseparjvalues}, with~$w$ and~$z'$ imaginary quadratic. This bound involves not only the term ${|j'w)||z'-w|}$,  which indeed can be refined by refining the lower bound for ${|z'-w|}$, but also the term $|j'(w)|^2$, for which the lower bound for ${|z'-w|}$ is irrelevant.  
\end{remark}

\begin{corollary}
\label{cderiv}
Let~$\tau$ be an imaginary quadratic number of discriminant~$\Delta$. Assume that ${\tau\in\calF}$ and ${\tau\ne i,\zeta_6}$.  Then
\begin{align}
\label{eseparfromzeta}
\min \{|\tau-\zeta_6|,|\tau-\zeta_3|\}& \ge \frac{\sqrt3}{4}|\Delta|^{-1},\\
\label{eseparfromi}
|\tau-i|&\ge \frac{3}{8}\Delta|^{-1},\\
\label{ejseparfromzero}
|j(\tau)|&\ge 700|\Delta|^{-3},\\
\label{ejseparfromtwelvecube}
|j(\tau)-1728|&\ge 2000|\Delta|^{-2},\\
\label{ederiv}
|j'(\tau)|&\ge 40000|\Delta|^{-2}.
\end{align}
\end{corollary}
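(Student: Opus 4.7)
The plan is to deduce the distance bounds \eqref{eseparfromzeta} and \eqref{eseparfromi} directly from Proposition~\ref{pseparquad}, and then to obtain the three $j$-related inequalities by feeding those distance bounds into the case-by-case estimates of Propositions~\ref{pglobalj} and~\ref{pglobaljprime}. For \eqref{eseparfromzeta}, I apply Proposition~\ref{pseparquad} to the pair $(\tau,\zeta_6)$, noting that $\zeta_6=(1+\sqrt{-3})/2$ is imaginary quadratic with discriminant $-3$ and $\Im\zeta_6=\sqrt{3}/2$. Since $\zeta_6$ is the unique minimizer of $\Im z$ on $\calF$, the assumption $\tau\in\calF\setminus\{\zeta_6\}$ forces $\Im\tau>\sqrt{3}/2$, so the first case of Proposition~\ref{pseparquad} gives
$$
|\tau-\zeta_6|\ge\frac{2\Im\tau\cdot(\sqrt{3}/2)^2}{3|\Delta|}=\frac{\Im\tau}{2|\Delta|}\ge\frac{\sqrt{3}}{4|\Delta|};
$$
the same computation with $\zeta_3$ (same discriminant, same imaginary part) completes \eqref{eseparfromzeta}. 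For \eqref{eseparfromi}, Proposition~\ref{pseparquad} applied to $(\tau,i)$ with $|\Delta_i|=4$, $\Im i=1$ yields $|\tau-i|\ge\Im\tau\min\{\Im\tau,1\}/(2|\Delta|)\ge 3/(8|\Delta|)$, after splitting on whether $\Im\tau\ge 1$ or $\sqrt{3}/2\le\Im\tau<1$; the marginal case $\Im\tau=1$ is handled by the second alternative of Proposition~\ref{pseparquad} and gives a strictly stronger bound.

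Turning to \eqref{ejseparfromzero}--\eqref{ederiv}, I substitute the distance bounds into Propositions~\ref{pglobalj} and~\ref{pglobaljprime}. For \eqref{ejseparfromzero}: in the ``close'' case of Proposition~\ref{pglobalj}(\ref{izeta6}), $|j(\tau)|\ge 30000\cdot(\sqrt{3}/(4|\Delta|))^3>2400|\Delta|^{-3}$, and in the ``far'' case $|j(\tau)|\ge 3\cdot 10^{-5}$, which dominates $700|\Delta|^{-3}$ once $|\Delta|\ge 286$. Bound \eqref{ejseparfromtwelvecube} is obtained symmetrically from Proposition~\ref{pglobalj}(\ref{ii}): the close case gives $|j(\tau)-1728|\ge 20000\cdot(3/(8|\Delta|))^2\approx 2812|\Delta|^{-2}$, and the far case gives $|j(\tau)-1728|\ge 2\ge 2000|\Delta|^{-2}$ for $|\Delta|\ge 32$. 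For \eqref{ederiv}, Proposition~\ref{pglobaljprime}'s middle case $|\tau-i|\le 0.01$ gives the easy part: $|j'(\tau)|\ge 40000|\tau-i|\ge 40000\cdot 3/(8|\Delta|)=15000/|\Delta|\ge 40000/|\Delta|^2$ whenever $|\Delta|\ge 3$.

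The main obstacle is the small-$|\Delta|$ regime together with the case of $\tau$ near $\zeta_6$ or $\zeta_3$. The absolute lower bounds $3\cdot 10^{-5}$, $2$, $10^{-4}$ from the ``far'' cases of Propositions~\ref{pglobalj} and~\ref{pglobaljprime} do not automatically exceed the targets $700|\Delta|^{-3}$, $2000|\Delta|^{-2}$, $40000|\Delta|^{-2}$ when $|\Delta|$ is small, and naively substituting $|\tau-\zeta_6|\ge\sqrt{3}/(4|\Delta|)$ into the close-case bound $|j'(\tau)|\ge 10^5|\tau-\zeta_6|^2$ yields only $\approx 18750|\Delta|^{-2}$, short of the target $40000|\Delta|^{-2}$. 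I plan to close both gaps by a sharper local analysis exploiting the integrality of the primitive reduced form $ax^2+bx+c$ representing $\tau$: the condition $|b|\le a\le c$ forces $a\le\sqrt{|\Delta|/3}$, so for $b\ne -a$ one has $|\Re\tau-1/2|\ge 1/(2a)\ge\sqrt{3}/(2\sqrt{|\Delta|})$, while for $b=-a$ the divisibility $4a\mid |\Delta|-3a^2$ forces $|\Delta|-3a^2\ge 4a$, hence $|\Im\tau-\sqrt{3}/2|\ge 1/\sqrt{|\Delta|}$. The resulting refined bound $|\tau-\zeta_6|\gtrsim |\Delta|^{-1/2}$, with analogous refinements near $\zeta_3$ and $i$, makes the local Taylor estimates near the elliptic points produce the desired $|\Delta|^{-k}$ lower bounds with room to spare; alternatively, since only finitely many discriminants lie below the relevant thresholds, one may close the remaining small-$|\Delta|$ gap by direct numerical verification.
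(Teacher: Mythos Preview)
Your approach is essentially the paper's: derive \eqref{eseparfromzeta} and \eqref{eseparfromi} from Proposition~\ref{pseparquad} exactly as you do, feed them into the case splits of Propositions~\ref{pglobalj} and~\ref{pglobaljprime}, and then dispose of the small-$|\Delta|$ range by a finite numerical check. The paper does not attempt your refined integrality bound; it simply runs a \textsf{PARI} script over the relevant ranges (up to $|\Delta|\le 300$ for \eqref{ejseparfromzero} and up to $|\Delta|\le 20000$ for \eqref{ederiv}).

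Your reduced-form argument giving $\min\{|\tau-\zeta_6|,|\tau-\zeta_3|\}\gtrsim|\Delta|^{-1/2}$ is correct and is in fact a useful addition: you are right that substituting \eqref{eseparfromzeta} into the near-$\zeta_6$ branch of Proposition~\ref{pglobaljprime} produces only $10^5(\sqrt3/4)^2|\Delta|^{-2}=18750\,|\Delta|^{-2}$, short of the claimed $40000\,|\Delta|^{-2}$. The paper's intermediate displayed bound $\min\{10^{-4},15000|\Delta|^{-1},40000|\Delta|^{-2}\}$ glosses over this shortfall, in effect leaving the near-$\zeta_6$ branch to the numerical verification; your argument handles it directly (the near-$\zeta_6$ branch then yields $\ge 75000|\Delta|^{-1}$, which dominates $40000|\Delta|^{-2}$ for all $|\Delta|\ge 1$). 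Note, however, that the refined bound does not by itself remove the need for the finite check: in the ``far'' branch the absolute constants $3\cdot10^{-5}$, $2$, $10^{-4}$ still fail to dominate $700|\Delta|^{-3}$, $2000|\Delta|^{-2}$, $40000|\Delta|^{-2}$ for the smallest discriminants, so the numerical verification you mention at the end remains necessary, just as in the paper.
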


\begin{proof}
Estimates~\eqref{eseparfromzeta} and~\eqref{eseparfromi} are obtained using Proposition~\ref{pseparquad} with ${\alpha=\tau}$ and ${\alpha'=\zeta_6,\zeta_3,i}$, respectively; note that ${\Im\tau\ge \sqrt3/2}$ because ${\tau\in \calF}$.

To obtain~\eqref{ejseparfromzero},~\eqref{ejseparfromtwelvecube} and~\eqref{ederiv} we combine Propositions~\ref{pglobalj} and~\ref{pglobaljprime} with estimates~\eqref{eseparfromzeta} and~\eqref{eseparfromi}. We obtain
\begin{align}
|j(\tau)|&\ge \min\{3\cdot10^{-5},700|\Delta|^{-3}\},\nonumber\\
\label{ejseparfromtwelvecubewithconstant}
|j(\tau)-1728|&\ge \min\{2,2000|\Delta|^{-2}\},\\
\label{ederivythree}
|j'(\tau)|&\ge \min \{10^{-4},15000|\Delta|^{-1}, 40000|\Delta|^{-2}\}.
\end{align}
Note that ${3\cdot10^{-5}>700|\Delta|^{-3}}$ when ${|\Delta|\ge300}$. A quick \textsf{PARI} script shows that ${|j(\tau)|\ge700|\Delta|^{-3}}$ when~$\tau$ has discriminant~$\Delta$ satisfying ${|\Delta|\le 300}$. %\makeascript
This proves inequality~\eqref{ejseparfromzero}.

Similarly, using a quick calculation with \textsf{PARI} one gets rid of~$2$ in~\eqref{ejseparfromtwelvecubewithconstant}, proving~\eqref{ejseparfromtwelvecube}.

Finally, since ${|\Delta|\ge 3}$ we have ${15000|\Delta|^{-1}\ge40000|\Delta|^{-2}}$, and~\eqref{ederivythree} becomes
$$
|j'(\tau)|\ge \min \{10^{-4}, 40000|\Delta|^{-2}\}.
$$
We have ${10^{-4}\ge 40000|\Delta|^{-2}}$ when ${|\Delta|\ge 20000}$, and we again use a  \textsf{PARI} script %\makeascript
to show that ${|j'(\tau)|\ge40000|\Delta|^{-2}}$ when~$\tau$ has discriminant~$\Delta$ with ${|\Delta|\le 20000}$.
This proves~\eqref{ederiv}.
\end{proof}

\section{Separating singular moduli}
\label{sseparating}
In this section we prove the first principal result of this article. Recall that we denote by~$\Delta_x$ the fundamental discriminant of the singular modulus~$x$.

\begin{theorem}
\label{thsepar}
Let~$x,y$ be distinct singular moduli. Assume that ${|\Delta_x|\ge |\Delta_y|}$. Then
\begin{equation}
\label{esepar}
|x-y|\ge \min
\{%5\cdot10^{-7},
800|\Delta_y|^{-4}, 20000|\Delta_x|^{-1}|\Delta_y|^{-3},700|\Delta_x|^{-3}\}.
\end{equation}
\end{theorem}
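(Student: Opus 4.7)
The plan is to work with the CM preimages $\tau_x,\tau_y\in\calF$ and split on the size of $\max\{\Im\tau_x,\Im\tau_y\}$, using one of Proposition~\ref{pbigimwcorrected} or Proposition~\ref{pseparjvalues} as the main separation tool, and combining with the Liouville-type bound of Proposition~\ref{pseparquad} for the preimages together with the lower bound $|j'(\tau)|\ge 40000|\Delta|^{-2}$ from Corollary~\ref{cderiv}.

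\emph{Setup.} Write $x=j(\tau_x)$ and $y=j(\tau_y)$ with $\tau_x,\tau_y\in \calF$; each $\tau$ is an imaginary quadratic number of the indicated discriminant and satisfies $\Im\tau\ge \sqrt3/2$. After possibly swapping the names $x\leftrightarrow y$ I may assume $\Im\tau_y\ge\Im\tau_x$, while remembering that the bound of the theorem is \emph{not} symmetric in $\Delta_x,\Delta_y$, so one must check that no matter which of the two labels ends up as the one with larger imaginary part the conclusion still holds.

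\emph{Case B: $\Im\tau_y\le 1.3$.} Apply Proposition~\ref{pseparjvalues} with $z=\tau_x$, $w=\tau_y$ to obtain some $\tau'_x$ in the $10^{-5}$-neighborhood of $\calF$ with $j(\tau'_x)=x$ and
\[
|x-y|\ge\min\bigl\{5\cdot10^{-7},\,5\cdot10^{-7}|j'(\tau_y)|^2,\,0.6\,|j'(\tau_y)|\,|\tau'_x-\tau_y|\bigr\}.
\]
Inserting $|j'(\tau_y)|\ge 40000|\Delta_y|^{-2}$, the middle term becomes $\ge 800|\Delta_y|^{-4}$, which is exactly the first of the three target bounds. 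For the third term, Proposition~\ref{pseparquad} applied to $\tau'_x,\tau_y$ (with discriminants $\Delta_x,\Delta_y$ and both imaginary parts $\ge\sqrt3/2$) gives, in the generic case $\Im\tau'_x\ne\Im\tau_y$, the lower bound $|\tau'_x-\tau_y|\ge 3\sqrt3\,/\bigl(4|\Delta_x||\Delta_y|\bigr)$, producing roughly $31000\,|\Delta_x|^{-1}|\Delta_y|^{-3}$ and so clearing the middle target $20000\,|\Delta_x|^{-1}|\Delta_y|^{-3}$. The exceptional sub-case $\Im\tau'_x=\Im\tau_y$ of Proposition~\ref{pseparquad} yields only a $|\Delta_x|^{-1/4}|\Delta_y|^{-1/4}$-type estimate, which I would have to show is absorbed by the remaining $700|\Delta_x|^{-3}$ target term (or by $800|\Delta_y|^{-4}$, using $|\Delta_x|\ge|\Delta_y|$).

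\emph{Case A: $\Im\tau_y\ge 1.3$.} Proposition~\ref{pbigimwcorrected} yields a translate $\tau'_x\in\{\tau_x,\tau_x\pm 1\}$ with $|x-y|\ge e^{2\pi\Im\tau_y}\min\{0.4|\tau'_x-\tau_y|,\,0.05\}$. Since $e^{2\pi\cdot 1.3}>3500$ and $|\Delta_y|\ge 4\,\Im\tau_y^2\ge 7$, the enormous exponential factor swallows any polynomial-in-$|\Delta|$ loss coming from Proposition~\ref{pseparquad}, so this case is essentially automatic once one replaces $|\tau'_x-\tau_y|$ by its Liouville-type lower bound.

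\emph{Main obstacle.} I expect the main technical difficulty to lie not in Case A but in dealing with the absolute constant $5\cdot 10^{-7}$ in the output of Proposition~\ref{pseparjvalues}: this term is only automatically $\ge 800|\Delta_y|^{-4}$ once $|\Delta_y|$ is moderately large (roughly $|\Delta_y|\gtrsim 200$), so the remaining finite range of small discriminants must be handled by direct \textsf{PARI} verification, in the same spirit as the proof of Corollary~\ref{cderiv}. The second pitfall is keeping the bookkeeping clean when matching the three asymmetric terms $800|\Delta_y|^{-4}$, $20000|\Delta_x|^{-1}|\Delta_y|^{-3}$, $700|\Delta_x|^{-3}$ to the outputs of the two propositions, in particular ensuring the $\Im\tau'_x=\Im\tau_y$ sub-case produces a term fitting cleanly inside the minimum.
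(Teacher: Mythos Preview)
Your overall architecture is the same as the paper's, but there is one genuine gap and one misdiagnosis.

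\textbf{The gap.} In Case~B you invoke the bound $|j'(\tau_y)|\ge 40000|\Delta_y|^{-2}$ from Corollary~\ref{cderiv}, but that corollary carries the hypothesis $\tau\ne i,\zeta_6$. If the $w$-point happens to be~$i$ (that is, the singular modulus in the role of~$w$ equals $1728$), then $j'(w)=0$ and the whole output of Proposition~\ref{pseparjvalues} collapses to~$0$. Your swap does not save you: if $\tau_y=i$ after swapping, then $\Im\tau_x\le 1$ is perfectly possible (any CM point on the unit circle in~$\calF$), so this case genuinely occurs. The paper handles it by setting aside $y\in\{0,1728\}$ and appealing directly to the estimates $|j(\tau_x)|\ge 700|\Delta_x|^{-3}$ and $|j(\tau_x)-1728|\ge 2000|\Delta_x|^{-2}$ from Corollary~\ref{cderiv}; \emph{this} is the origin of the third term $700|\Delta_x|^{-3}$ in~\eqref{esepar}. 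Without that term your ``finite PARI range'' is not finite: to absorb the constant $5\cdot10^{-7}$ you would need $|\Delta_x|\,|\Delta_y|^3\ge 4\cdot10^{10}$, which for $|\Delta_y|=3$ or~$4$ leaves $|\Delta_x|$ unbounded.

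\textbf{The misdiagnosis.} You flag the sub-case $\Im\tau_x'=\Im\tau_y$ of Proposition~\ref{pseparquad} as the likely source of the $700|\Delta_x|^{-3}$ term. In fact that sub-case is \emph{easier}, not harder: the bound $(\sqrt3/2)|\Delta_x|^{-1/4}|\Delta_y|^{-1/4}$ dominates $|\Delta_x|^{-1}|\Delta_y|^{-1}$ as soon as $|\Delta_x|,|\Delta_y|\ge 3$, so the paper simply absorbs it into the generic estimate $|\tau_x'-\tau_y|\ge|\Delta_x|^{-1}|\Delta_y|^{-1}$ valid in both cases. No extra term is needed here.

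A minor remark: the swap $\Im\tau_y\ge\Im\tau_x$ is unnecessary and only creates bookkeeping trouble with the asymmetric target. The paper keeps the original labels (with $|\Delta_x|\ge|\Delta_y|$) and simply splits on whether $\Im\tau_y\ge 1.3$; Proposition~\ref{pseparjvalues} needs only $\Im w\le 1.3$, not $\Im w\ge\Im z$, so no swap is required in Case~B, and in Case~A any ordering of the imaginary parts is harmless since the exponential factor only improves.
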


\begin{proof}
Let ${\tau_x,\tau_y\in \calF}$ be such that ${j(\tau_x)=x}$ and ${j(\tau_y)=y}$. Assume first that ${\Im \tau_y\ge 1.3}$. In this case Proposition~\ref{pbigimwcorrected} implies that
\begin{equation}
\label{ebigimtaucorrected}
|x-y|\ge e^{2\pi \Im\tau_y}\min \{0.4|\tau_x'-\tau_y|,0.05\}\ge \min\{1400|\tau_x'-\tau_y|,170\}.
\end{equation}
where ${\tau_x'\in \{\tau_x,\tau_x-1,\tau_x+1\}}$. We have ${\Im\tau_x'=\Im\tau_x\ge \sqrt3/2}$ because ${\tau_x\in \calF}$. Hence, using Proposition~\ref{pseparquad}, we obtain
\begin{equation*}
|\tau_x'-\tau_y|\ge
|\Delta_x|^{-1}|\Delta_y|^{-1}.
\end{equation*}
Combining this with~\eqref{ebigimtaucorrected} we obtain an estimate much sharper than~\eqref{esepar}.

Now let us assume that ${\Im \tau_y\le 1.3}$ and ${y\ne 0,1728}$. In this case Proposition~\ref{pseparjvalues} implies that
\begin{equation}
\label{esmallimtau}
|x-y|\ge \min\bigl\{5\cdot10^{-7},5\cdot10^{-7}|j'(\tau_y)|^2, 0.6|j'(\tau_y)||\tau_x'-\tau_y|\bigr\},
\end{equation}
where~$\tau_x'$ belongs to the ${10^{-5}}$-neighborhood of~$\calF$ and ${j(\tau_x')=x}$.

We have
\begin{equation}
\label{elowtaus}
\Im\tau_y\ge \sqrt3/2, \qquad\Im\tau_x'\ge \sqrt3/2-10^{-5}.
\end{equation}
Hence, using Proposition~\ref{pseparquad}, we obtain
\begin{equation*}
%\label{ebetweentaus}
|\tau_x'-\tau_y|\ge
\begin{cases}
|\Delta_x|^{-1}|\Delta_y|^{-1},& \text{if}\quad \Im\tau_x'\ne\Im\tau_y, \\
(\sqrt3/2)|\Delta_x|^{-1/4}|\Delta_x|^{-1/4},&\text{if}\quad \Im\tau_x'=\Im\tau_y.
\end{cases}
\end{equation*}
Since ${|\Delta_x|,|\Delta_y|\ge 3}$, we have ${|\Delta_x|^{3/4}|\Delta_y|^{3/4}\ge 2/\sqrt3}$, which implies that
\begin{equation}
\label{ebetweentaus}
|\tau_x'-\tau_y|\ge
|\Delta_x|^{-1}|\Delta_y|^{-1}
\end{equation}
in any case.
In addition to this, since ${y\ne 0,1728}$, we have ${\tau_y\ne\zeta_6,i}$.  Hence~\eqref{ederiv} implies that
\begin{equation*}
%\label{ederivy}
|j'(\tau_y)|\ge 40000|\Delta_y|^{-2}.
\end{equation*}
Combining this with~\eqref{esmallimtau} and~\eqref{ebetweentaus} we obtain
$$
|x-y|\ge \min
\{5\cdot10^{-7}, 800|\Delta_y|^{-4}, 20000|\Delta_x|^{-1}|\Delta_y|^{-3}\}.
$$
Finally, when ${y=0}$ or ${y=1728}$ Corollary~\ref{cderiv} implies that
${|x-y|\ge 700|\Delta_x|^{-3}}$.

Thus, we have proved that
\begin{equation}
\label{eseparwithconstant}
|x-y|\ge \min
\{5\cdot10^{-7}, 800|\Delta_y|^{-4}, 20000|\Delta_x|^{-1}|\Delta_y|^{-3},700|\Delta_x|^{-3}\};
\end{equation}
to conclude, we have to get rid of the term ${5\cdot10^{-7}}$ on the right.

Note that
\begin{align*}
800|\Delta_y|^{-4}&\le 5\cdot10^{-7}&& \text{when $|\Delta_y|\ge200$},\\
700|\Delta_x|^{-3}&\le 5\cdot10^{-7}&& \text{when $|\Delta_x|\ge1119$}.\
\end{align*}
Hence we have to verify that~\eqref{esepar} holds true when ${|\Delta_y|\le200}$ and ${|\Delta_x|\le1119}$. We did it  using %\makeascript
a \textsf{PARI} script.
\end{proof}

For small values of discriminants, much better lower bounds hold true. Using a PARI script, we proved the following proposition, which %may serve as a complement to Theorem~\ref{thsepar}, and 
will be used several times in Section~\ref{sprim}.

\begin{proposition}
\label{psmaldis}
Let $X_k$, $d_k$ and $d'_k$ be the numbers defined in Table~\ref{tasmaldis}.
\begin{table}
\caption{Data for Proposition~\ref{psmaldis}}
\label{tasmaldis}
$$
\begin{array}{c|llll}
k&1&2&3&4\\
\hline
X_k& 300&1000&3000&10000\\
d_k & 3.82&0.305&0.0292&0.00247\\
d'_k & 92.4&15.7&3.07&0.494
\end{array}
$$
\end{table}
Then, for any distinct singular moduli  $x,y$ with ${|\Delta_x|,|\Delta_y|\le X_k}$ we have ${|x-y|\ge d_k}$. Moreover, if ${\Delta_x=\Delta_y}$ then ${|x-y|\ge d'_k}$.
\end{proposition}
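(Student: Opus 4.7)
The plan is to verify both inequalities by exhaustive numerical computation, exploiting the fact that the singular moduli of a given discriminant~$\Delta$ are exactly the complex roots of the Hilbert class polynomial ${H_\Delta(X) \in \Z[X]}$.

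For each ${k \in \{1,2,3,4\}}$, the script enumerates every imaginary quadratic discriminant~$\Delta$ with ${3 \le |\Delta| \le X_k}$, computes $H_\Delta$ (using, e.g., PARI's \texttt{polclass}), and extracts the set $S_\Delta$ of its complex roots at high working precision (using \texttt{polroots}). The bound ${|x-y| \ge d'_k}$ in the case ${\Delta_x = \Delta_y}$ is checked by computing, for each such~$\Delta$ separately, the minimum pairwise distance within $S_\Delta$ and verifying that it is at least $d'_k$. The bound ${|x-y| \ge d_k}$ over all distinct pairs with ${|\Delta_x|, |\Delta_y| \le X_k}$ is checked analogously on the union ${S = \bigcup_\Delta S_\Delta}$. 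To avoid forming all ${O(|S|^2)}$ pairs, one can sort $S$ by real part and compare only pairs whose real parts differ by at most $d_k$; since singular moduli lie in the image of the fundamental domain under~$j$, many have large absolute value and are in any case well separated, so the relevant clusters are small.

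The main obstacle is controlling numerical precision rigorously rather than heuristically. The coefficients of $H_\Delta$ grow rapidly with $|\Delta|$ (the dominant root behaves roughly like $e^{\pi\sqrt{|\Delta|}}$, which for ${|\Delta| = 10000}$ already exceeds $10^{130}$), while some nearby distinct roots can agree in quite a few leading digits. One must therefore run \texttt{polroots} at a \texttt{realprecision} substantially greater than $\log_{10}(1/d_k)$ decimal digits, and then certify the computed minimum either by interval arithmetic or by a conservative margin: if the numerical distance between two roots exceeds $d_k + \varepsilon$ where $\varepsilon$ is a rigorous upper bound on the combined root-isolation error, the lower bound follows. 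With this precaution, the four claimed bounds are established by a finite, reproducible computation.
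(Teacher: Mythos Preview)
Your approach is correct and coincides with the paper's: the proposition is stated there as the output of a \textsf{PARI} computation, with the verifying scripts made publicly available. Your discussion of precision control is a welcome elaboration, but methodologically there is no difference.
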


\section{More on singular moduli}
\label{ssing}
In this section we summarize some properties of singular moduli that will be used in the proof of Theorem~\ref{thalpha}.

\subsection{Galois-theoretic properties}
The following properties of singular moduli will be systematically used in the sequel, usually without special reference. For more details, the reader may consult, for instance,~\cite{Co13}, especially §7,~§9D,~§11 and~§13 therein.  

\begin{itemize}
\item
A singular modulus~$x$ is an algebraic number (even algebraic integer) of degree equal to the class number $h(\Delta_x)$. 

\item
Two singular moduli~$x$ and~$y$ are conjugate over~$\Q$ if and only if ${\Delta_x=\Delta_y}$. In other words, singular moduli of given discriminant~$\Delta$ form a Galois orbit over~$\Q$, of cardinality $h(\Delta)$.

\item
If ${K_x=\Q(\sqrt{\Delta_x})}$ is the CM field associated to the singular modulus~$x$ then ${K_x(x)/K_x}$ is an abelian Galois extension of degree $h(\Delta_x)$, and ${K_x(x)/\Q}$ is a Galois extension of degree $2h(\Delta)$ (in general not abelian).

\item
The algebraic extension $\Q(x)/\Q$ is usually not Galois, but if it is, it must be $2$-elementary; that is, the Galois group is of  the type ${\Z/2\Z\times \cdots\times \Z/2\Z}$. (The proof can be found, for instance, in  \cite[Corollary 3.3]{ABP15}.) In this case ${K_x(x)/\Q}$ is 2-elementary as well.  If $\Q(x)$ is  not Galois over~$\Q$ then its Galois closure is $K_x(x)$.

\end{itemize}

We will use the following lemmas. 
Recall that~$D_x$ denotes the fundamental discriminant of a singular modulus~$x$, see Subsection~\ref{ssprelim}.

\begin{lemma}
\label{ltwoconj}
Let $x,x',y,y'$ be singular moduli. Assume that
$$
\Delta_x=\Delta_{x'}, \quad \Delta_y=\Delta_{y'}.
$$
Furthermore, assume that ${\Q(x,x')=\Q(y,y')}$. Then we have the following.
\begin{enumerate}
\item
If ${D_x\ne D_y}$ then ${\Q(x)=\Q(y)}$.

\item
If ${D_x=D_y}$ then ${K(x)=K(y)}$, where ${K=K_x=K_y}$ is the common CM-field for~$x$ and~$y$.
\end{enumerate}
\end{lemma}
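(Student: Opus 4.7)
The whole argument will rest on the compositum identity $L\cdot K_x = K_x(x)$ (and its analogue for $y$), which I will establish first. Since $K_x(x)/\Q$ is Galois and contains~$x$, it contains every $\Q$-conjugate of~$x$, so $x' \in K_x(x)$ and hence $L\subseteq K_x(x)$; combined with $\Q(x)\subseteq L$ and $K_x\subseteq K_x(x)$ this forces the equality, and symmetrically $L\cdot K_y = K_y(y)$. Part~(2) then follows in one line: when $K_x = K_y =: K$, the common value $L\cdot K$ equals both $K(x)$ and $K(y)$.

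For part~(1), assume $K_x\ne K_y$. I will identify the Galois closure $M$ of $L/\Q$ in two ways: it lies inside the Galois extension $K_x(x)$ and contains all $\Q$-conjugates of~$x$, so it equals the Galois closure $K_x(x)$ of $\Q(x)$; by symmetry $M = K_y(y)$, so $M := K_x(x) = K_y(y)$ is a single Galois extension of $\Q$ containing both~$K_x$ and~$K_y$. From $M = L\cdot K_x$ we get $[M:L] = [K_x : L\cap K_x]\in\{1,2\}$. When $[M:L] = 2$, the degree count $[L:\Q] = h(\Delta_x) = [\Q(x):\Q]$ together with $\Q(x)\subseteq L$ forces $L = \Q(x)$; the symmetric argument gives $L = \Q(y)$, and we conclude $\Q(x) = \Q(y)$.

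\textbf{The main obstacle} is to rule out the remaining case $[M:L] = 1$, i.e., $L = M$ itself Galois over $\Q$ containing both imaginary quadratic fields. My plan here is a purely group-theoretic contradiction using the two CM-decompositions of $G := \Gal(L/\Q)$. Put $A = \Gal(L/K_x)$ and $B = \Gal(L/K_y)$, abelian normal subgroups of index~$2$. The CM-theoretic description $G = A\rtimes\Gal(K_x/\Q)$ with complex conjugation acting on~$A$ by inversion shows that every element of $G\setminus A$ squares to $1$: indeed $(as)^2 = a\cdot(sas^{-1})\cdot s^2 = a\cdot a^{-1} = 1$. For any $g\in A\setminus B$ and $h\in A\cap B$, commutativity of~$A$ gives $ghg^{-1}=h$, while the analogous inversion action of $G\setminus B$ on~$B$ gives $ghg^{-1}=h^{-1}$; hence $h^2=1$, and $A\cap B$ is $2$-elementary. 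Combined with the fact that $B\setminus(A\cap B)\subseteq G\setminus A$ consists of involutions, this forces the whole abelian group~$B$ to be $2$-elementary. But then inversion acts trivially on~$B$, so $G = B\rtimes\Gal(K_y/\Q)$ is abelian; every subgroup of~$G$ is then normal, making $\Q(y)/\Q$ Galois, so $y'\in\Q(y)$ and $[\Q(y,y'):\Q] = h(\Delta_y)$, contradicting $[L:\Q] = [M:\Q] = 2h(\Delta_y)$.
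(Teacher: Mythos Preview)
Your proof of part~(2) via the compositum identity $L\cdot K = K(x) = K(y)$ is correct and cleaner than the paper's case analysis.

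For part~(1) there is a genuine gap. You assert that $K_x(x)$ is the Galois closure of $\Q(x)$ over~$\Q$, but this holds only when $\Q(x)/\Q$ is \emph{not} already Galois. When $\Q(x)/\Q$ is Galois (equivalently $2$-elementary, as recalled in the paper), you have $x'\in\Q(x)$, so $L=\Q(x)$ and hence $M=L\ne K_x(x)$; in particular $K_x\not\subseteq L$, so your subgroup $A=\Gal(L/K_x)$ is undefined and the dihedral set-up collapses. Your case ``$[M:L]=2$'' happens to be unaffected, since there $L$ is non-Galois and so $\Q(x)$ is non-Galois a fortiori; but your case ``$[M:L]=1$'' genuinely misses the possibility $M=L=\Q(x)$.

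The fix is short: dispose of the $2$-elementary case first. If $\Q(x)/\Q$ is Galois then $L=\Q(x)$ has abelian Galois group, so the intermediate field $\Q(y)\subseteq L$ is itself Galois over~$\Q$; hence $y'\in\Q(y)$, $L=\Q(y)$, and $\Q(x)=\Q(y)$ as required (symmetrically if $\Q(y)/\Q$ is Galois). Once both $\Q(x)$ and $\Q(y)$ are non-Galois, your identification $M=K_x(x)=K_y(y)$ becomes valid and your dihedral contradiction goes through unchanged. With this patch your argument splits into exactly the paper's trichotomy ($L$ $2$-elementary; $L$ Galois but not $2$-elementary; $L$ not Galois). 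The paper defers part~(1) entirely to~[FR18]; your self-contained group-theoretic contradiction in the middle case is a worthwhile alternative.
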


\begin{proof}
The case ${D_x\ne D_y}$ is \cite[Corollary~3.3]{FR18}.

Now assume that ${D_x=D_y}$. We use the terminology of \cite[Section~3]{FR18}. If the field ${L=\Q(x,x')=\Q(y,y')}$ is $2$-elementary (that is, Galois over~$\Q$ with Galois group of  the type ${\Z/2\Z\times \cdots\times \Z/2\Z}$), then, arguing as in the beginning of the proof of \cite[Corollary~3.3]{FR18}, we obtain ${\Q(x)=\Q(y)}$.

Now assume that~$L$ is not $2$-elementary. If it is Galois over~$\Q$, then it is the Galois closure of both $\Q(x)$ and $\Q(y)$. Since the Galois closure of $\Q(x)$ is $K(x)$ and that of $\Q(y)$ is $K(y)$,  we are done. Finally, if~$L$ is not Galois over~$\Q$ then ${x'\in \Q(x)}$ and ${y'\in \Q(y)}$, and so ${L=\Q(x)=\Q(y)}$.
\end{proof}

\begin{lemma}
\label{lsamefield}
Let $x,y$ be singular moduli with the same fundamental discriminant ${D=D_x=D_y}$, and let ${K=K_x=K_y=\Q(\sqrt D)}$ be their common CM-field. Assume that ${K(x)=K(y)\ne K}$. Then we have one of the following options.

\begin{enumerate}
\item We have ${\Delta_x=\Delta_y}$, hence the singular moduli~$x$ and~$y$ are conjugate over~$\Q$. 

\item
\label{idisdel}
Up to swapping~$x$ and~$y$, we have  ${\Delta_x=4\Delta_y}$ and ${\Delta_y\equiv 1 \bmod 8}$.
\end{enumerate}
\end{lemma}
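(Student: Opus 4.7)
The plan is to translate $K(x)=K(y)\ne K$ into a statement about ring class fields and then apply the explicit class number formula for non-maximal orders. Recall that $K(x)$ is the ring class field $H_{f_x}$ of the order $\OO_{\Delta_x}$ in $K=K_x=K_y$, and similarly $K(y)=H_{f_y}$. Setting $L=H_{f_x}=H_{f_y}\ne K$, the first step is to reduce to the case where one of the conductors divides the other.

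For this reduction, I use the standard fact that the conductor of $L/K$ as an abelian extension divides both $f_x\OO_K$ and $f_y\OO_K$ (since $L=H_{f_x}$ sits inside the ray class field of conductor $f_x\OO_K$, and similarly for $f_y$), hence divides $g\OO_K$, where $g=\gcd(f_x,f_y)$. Since $H_g$ is the maximal dihedral (i.e.\ ring-class-field-type) extension of $K$ of conductor dividing $g\OO_K$, and $L$ is dihedral by assumption, we have $L\subseteq H_g$; the reverse inclusion $H_g\subseteq H_{f_x}=L$ is immediate from $g\mid f_x$, giving $L=H_g$. It thus suffices to classify pairs $(f,f')$ with $f\mid f'$ and $H_f=H_{f'}$.

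For $f\mid f'$ with $f'=fm$, the index $[H_{f'}:H_f]=h(\OO_{f'})/h(\OO_f)$ is given by the classical formula
$$
\frac{h(\OO_{f'})}{h(\OO_f)}=\frac{m\,[\OO_K^*:\OO_f^*]}{[\OO_K^*:\OO_{f'}^*]}\prod_{p\mid f',\ p\nmid f}\!\left(1-\left(\frac{D}{p}\right)\frac{1}{p}\right).
$$
Setting this equal to $1$ with $f\ge 2$ (so the unit index ratio is trivial) and $m>1$, a short elementary analysis forces $m=2$, $f$ odd, and $(D/2)=1$, i.e.\ $D\equiv 1\pmod 8$. The key observation is that each factor $1-(D/p)/p$ lies in $[1-1/p,\,1+1/p]$ and, regrouping, every prime $p^v$ appearing in $m$ but not in $f$ contributes $p^{v-1}(p-(D/p))\ge 1$ to the product $m\prod(1-(D/p)/p)$, with equality exactly when $p=2$, $v=1$, and $(D/2)=1$. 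Applying this dichotomy to both $f_x$ and $f_y$ viewed as multiples of $g$, I obtain $\{f_x,f_y\}\subseteq\{g,2g\}$: either $f_x=f_y=g$, giving $\Delta_x=\Delta_y$ (case~(1) of the lemma), or $\{f_x,f_y\}=\{g,2g\}$ with $g$ odd and $D\equiv 1\pmod 8$, giving (after possibly swapping $x,y$) that $f_x=2f_y$, $\Delta_x=4\Delta_y$, and $\Delta_y=Df_y^2\equiv D\equiv 1\pmod 8$ (case~(2)).

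The main subtlety to address is the edge case $g=1$ with $D\in\{-3,-4\}$, where the unit index $[\OO_K^*:\OO_f^*]$ contributes a genuine extra factor of $3$ or $2$ and creates additional identifications such as $h(-3)=h(-12)=h(-27)=1$. In every such case, however, $H_g=H_1=K$ (for $D\in\{-3,-4\}$ the Hilbert class field coincides with $K$), so $L=H_g=K$, contradicting the hypothesis $L\ne K$. These anomalies are thus ruled out at the outset, and the conclusion follows.
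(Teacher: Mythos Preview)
Your argument is correct and essentially reconstructs what the paper defers to external references. The paper does not actually prove this lemma: it simply cites \cite[Proposition~4.3]{ABP15} for the dichotomy $\Delta_x=\Delta_y$ versus $\Delta_x=4\Delta_y$, and \cite[page~407]{BLP16} for the refinement ${\Delta_y\equiv 1\bmod 8}$. Your route---identifying $K(x)$ with the ring class field $H_{f_x}$, reducing to $L=H_g$ with $g=\gcd(f_x,f_y)$, and then analysing the relative class-number formula---is exactly the standard argument behind those references, so there is no substantive divergence.

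Two small points worth tightening. First, your reduction $L\subseteq H_g$ via ``$H_g$ is the maximal dihedral extension of conductor dividing $g\OO_K$'' is correct but is really the statement $H_{f_x}\cap H_{f_y}=H_{\gcd(f_x,f_y)}$ in disguise; the cleanest justification is that for any integer $a$ coprime to $g$ one can choose $a'\equiv a\pmod g$ coprime to $f_x$, whence $(a)\equiv(a')\pmod{P_K(g)}$ and $(a')\in P_{K,\Z}(f_x)$ lies in the Artin kernel, giving $(a)$ trivial in $\Gal(L/K)$. Second, your treatment of $g=1$ is fine, but note that for $D\notin\{-3,-4\}$ the unit-index ratio is already $1$ even when $g=1$, so the class-number analysis applies there without change; only the genuine exceptions $D\in\{-3,-4\}$ with $g=1$ need the separate observation $H_1=K$, which you handle correctly via the hypothesis $L\ne K$.
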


\begin{proof}
See \cite[Proposition~4.3]{ABP15}, where everything is proved except that in option~\ref{idisdel} we have ${\Delta_y\equiv 1 \bmod 8}$. For the latter, see \cite[page~407]{BLP16}.

To be precise, both in~\cite{ABP15} and~\cite{BLP16} the slightly stronger assumption ${\Q(x)=\Q(y)}$ (in our notation) is made, but the argument works also
under the hypothesis ${K(x)=K(y)}$. 
\end{proof}

\subsection{Dominant and subdominant singular moduli}
\label{ssdom}

It is well-known (see, for instance, \cite[Proposition~2.5]{BLP16} and the references therein) that
there is a one-to-one correspondence between the singular moduli of discriminant~$\Delta$ and the set~$T_\Delta$ of triples $(a,b,c)$ of integers satisfying ${b^2-4ac=\Delta}$ and
\begin{equation*}
%\label{ekuh}
\begin{gathered}
\gcd(a,b,c)=1, \quad \Delta=b^2-4ac,\\
\text{either\quad $-a < b \le a < c$\quad or\quad $0 \le b \le a = c$}.
\end{gathered}
\end{equation*}
If  ${(a,b,c)\in T_\Delta}$, then ${(b+\sqrt{\Delta})/2a}$ belongs to the standard fundamental domain, and the corresponding singular modulus is
${j((b+\sqrt{\Delta})/2a)}$.

We call a singular modulus  \textsl{dominant} if in the corresponding triple $(a,b,c)$ we have ${a=1}$, and \textsl{subdominant} if ${a=2}$. The following  property will be crucial.

\begin{proposition}[{\cite[Proposition 2.6]{BLP16}}]
\label{pdom}
There exist exactly one dominant and at most two subdominant singular moduli of  given discriminant~$\Delta$. More precisely,
\begin{itemize}
\item
there exist exactly~$2$  subdominant singular moduli of discriminant~$\Delta$ if ${\Delta\equiv 1\bmod 8}$, ${\Delta\ne -7}$;

\item
there exists exactly~$1$  subdominant singular modulus of discriminant~$\Delta$ if ${\Delta\equiv 8,12\bmod 16}$, ${\Delta\ne -4,-8}$;

\item
there are no subdominant singular moduli of discriminant~$\Delta$ if ${\Delta\equiv 5\bmod 8}$ or ${\Delta\equiv 0,4\bmod 16}$.

\end{itemize}
\end{proposition}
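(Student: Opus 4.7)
The plan is to enumerate reduced triples directly via the bijection recalled just above the statement: singular moduli of discriminant $\Delta$ correspond to triples $(a,b,c) \in \Z^3$ with $b^2 - 4ac = \Delta$, $\gcd(a,b,c) = 1$, and either $-a < b \le a < c$ or $0 \le b \le a = c$. Dominant triples have $a = 1$ and subdominant ones $a = 2$, so each part of the proposition reduces to a finite, elementary count for fixed $a$.

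For the dominant case, $a = 1$ forces $b \in \{0, 1\}$ from the reduction inequalities, and the parity of $b$ is pinned down by $b^2 \equiv \Delta \pmod{4}$. Thus $b$ is uniquely determined, $c = (b^2 - \Delta)/4$ is automatically $\ge 1$ since $|\Delta| \ge 3$, and $\gcd(1, b, c) = 1$ is trivial. This yields exactly one dominant triple, proving the first assertion.

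For the subdominant case, the defining equation becomes $b^2 \equiv \Delta \pmod{8}$, and the reduction conditions restrict $b$ to $\{-1, 0, 1, 2\}$, tightening to $b \ge 0$ when $c = 2$. I would split into cases by $\Delta \bmod 16$. If $\Delta \equiv 5 \pmod{8}$, the congruence $b^2 \equiv \Delta \pmod{8}$ has no solution. If $\Delta \equiv 0, 4 \pmod{16}$, only even $b$ satisfies the congruence, and a short check shows $\gcd(2, b, c) = 2$, violating coprimality. If $\Delta \equiv 8 \pmod{16}$, only $b = 0$ gives an integer $c = -\Delta/8$, which is odd (so $\gcd = 1$) and $\ge 2$ except for $\Delta = -8$. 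Analogously, $\Delta \equiv 12 \pmod{16}$ leaves only $b = 2$ as admissible, with the sole exceptional case $\Delta = -4$. Finally, if $\Delta \equiv 1 \pmod{8}$ then $b \in \{-1, 1\}$ both yield the same $c = (1 - \Delta)/8$; when $c > a = 2$, both signs produce admissible distinct triples, while $\Delta = -7$ gives $c < a$ and no triple exists.

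The argument is purely combinatorial, with no deep obstacle beyond careful bookkeeping: tracking which residue classes modulo $16$ force $\gcd(a, b, c) > 1$, and handling the boundary case $c = a$ where the constraint $b \ge 0$ may eliminate an otherwise-valid value of $b$. All remaining steps amount to direct verification of the three defining conditions on $(a, b, c)$.
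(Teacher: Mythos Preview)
The paper does not prove this proposition; it is quoted from \cite{BLP16} without argument, so there is no in-paper proof to compare against. Your direct enumeration of reduced triples with $a=1$ and $a=2$ is the natural (and standard) approach, and your case analysis is correct in all essentials.

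One small point you skate over: in the case $\Delta\equiv 1\pmod 8$ you write ``when $c>a=2$, both signs produce admissible distinct triples, while $\Delta=-7$ gives $c<a$,'' but you do not treat the boundary $c=a=2$, which occurs precisely for $\Delta=-15$. There the reduction condition $0\le b\le a=c$ eliminates $b=-1$, leaving only the single triple $(2,1,2)$. So strictly speaking the ``exactly~$2$'' clause of the proposition already fails at $\Delta=-15$; the statement as printed is slightly imprecise on this point. This has no effect on the paper's applications, which only use the upper bound ``at most two subdominant'' together with the non-existence clauses for $\Delta\equiv 5\pmod 8$ and $\Delta\equiv 0,4\pmod{16}$, all of which your argument establishes cleanly.
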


The inequality
$$
\bigl||j(z)|-e^{2\pi \Im z}\bigr|\le 2079,
$$
holds true for every~$z$ in the standard fundamental domain. It is proven in \cite[Lemma~1]{BMZ13}, but it can also be easily deduced from~\eqref{ejm} by setting ${v=\sqrt3/2}$ therein.  In particular, if~$x$ is a singular modulus %of discriminant~$\Delta$ 
corresponding to the triple ${(a,b,c)\in T_{\Delta_x}}$  then
$$
\bigl||x|-e^{\pi|\Delta_x|^{1/2}/a}\bigr|\le 2079.
$$
This implies that
\begin{align}
\label{euniv}
|x|&\le e^{\pi|\Delta_x|^{1/2}}+ 2079&& \text{in any case};\\
\label{eifdom}
|x|&\ge e^{\pi|\Delta_x|^{1/2}}- 2079&& \text{if $x$ is dominant};\\
\label{eifnotdom}
|x|&\le e^{\pi|\Delta_x|^{1/2}/2}+ 2079&& \text{if $x$ is not dominant};\\
\label{eifnotdomsubdom}
|x|&\le e^{\pi|\Delta_x|^{1/2}/3}+ 2079&& \text{if $x$ is neither dominant nor subdominant}.
\end{align}
These inequalities will be systematically used in the sequel, sometimes without special reference.

Note that if~$x$ is dominant, then it exceeds in absolute value any non-dominant singular modulus of the same discriminant: this is clear if ${h(\Delta_x)=1}$, and when ${h(\Delta_x)\ge 2}$, we have ${|\Delta_x|\ge 15}$, and the right-hand side of ~\eqref{eifdom} is bigger than that of~\eqref{eifnotdom}. 
This implies, in particular, that a dominant singular modulus must be real, because it cannot be equal in absolute value to any of its $\Q$-conjugates. 

%\subsection{$2$-elementary singular moduli}

%\subsection{Fields generated by singular moduli}

\section{The primitive element}
\label{sprim}
In this section we prove Theorem~\ref{thalpha}. Let $x,y$  be singular moduli and~$\alpha$ a rational number, ${\alpha \ne 0,\pm1}$, such that ${\Q(x+\alpha y)}$ is a proper subfield of $\Q(x,y)$.

Let~$L$ be the Galois closure of $\Q(x,y)$ over~$\Q$, and denote ${G=\Gal(L/\Q)}$. Since ${\Q(x+\alpha y)\ne \Q(x,y)}$,  there exists ${\sigma \in G}$ such that
\begin{equation}
\label{esuit}
x\ne x^\sigma, \qquad y\ne y^\sigma, \qquad x+\alpha y=x^\sigma+\alpha y^\sigma.
\end{equation}
%(we write the Galois action exponentially). %In the sequel we call such~$\sigma$ suitable.
Rewriting the latter equality as
\begin{equation}
\label{emain}
x-x^\sigma=-\alpha (y-y^\sigma),
\end{equation}
we obtain ${\Q(x-x^\sigma)=\Q(y-y^\sigma)}$.  It follows from Theorem~\ref{thsumdiff} that 
$$
\Q(x,x^\sigma)=\Q(y,y^\sigma).
$$
Now, using Lemmas~\ref{ltwoconj} and~\ref{lsamefield}, and swapping~$x$ and~$y$ if necessary,  we are in one of the following three options.

\begin{itemize}
\item
\textbf{Equal discriminants:} ${\Delta_x=\Delta_y}$.

{\sloppy

\item
\textbf{Equal fundamental discriminants, but distinct discriminants:}  ${D_x=D_y=:D}$ and ${K(x)=K(y)}$, where ${K=\Q(\sqrt D)}$ is the common CM field of~$x$ and~$y$;   furthermore, ${\Delta_y\equiv 1\bmod 8}$ and  ${\Delta_x=4\Delta_y}$. 

}

\item
\textbf{Distinct fundamental discriminants:}
${D_x\ne D_y}$ but  ${\Q(x)=\Q(y)}$.
\end{itemize}

We study these three cases separately.

Note that in each of the three cases above we have ${h(\Delta_x)=h(\Delta_y)}$.  We denote this quantity by~$h$. Note that 
$$
[\Q(x):\Q]=[\Q(y):\Q]=h. 
$$
In the case ${h=1}$ there is nothing to prove, and the case ${h=2}$ is very easy. Indeed, existence of~$\sigma$ with the property~\eqref{esuit} implies that ${\Q(x)=\Q(y)}$ and that~$\alpha$ is defined by~\eqref{ealpha}, so we are in the situation of Example~\ref{exquad}.

Thus, in the sequel we we assume that
%\begin{equation*}
%\label{ehge3}
${h\ge 3}$.
%\end{equation*}
This will also be used systematically, usually without special reference.

 %Our main tool will be the identity

\subsection{The case of equal discriminants}
\label{sseqdis}
We assume now that ${\Delta_x=\Delta_y=:\Delta}$. We may also assume that~$x$ is dominant as defined in Subsection~\ref{ssdom}.

Fix a Galois morphism~$\sigma$ satisfying~\eqref{esuit}.  Note that either ${y^\sigma\ne x}$ or ${y^{\sigma^{-1}}\ne x}$; indeed, if ${y^\sigma=y^{\sigma^{-1}}=x}$ then~\eqref{emain} implies ${\alpha=1}$, a contradiction. Thus, replacing, if necessary,~$\sigma$ by~$\sigma^{-1}$, we may assume that ${y^\sigma\ne x}$.
Using~\eqref{emain}, we obtain
\begin{equation}
\label{equote}
\alpha = - \frac{x-x^\sigma}{y-y^\sigma}, \qquad y,y^\sigma\ne x.
\end{equation}
This identity will be our principal tool.

\subsubsection{A lower bound for~$h$}

Let us first prove  that ${h\ge 4}$. As  ${h\ge 3}$, we will assume that  ${h=3}$ and derive a contradiction.    

When ${h=3}$, the field $\Q(x,y)$ is the full Ring Class Field associated to the discriminant~$\Delta$; we denote this field~$L$. In particular, it contains the imaginary quadratic CM field ${K=\Q(\sqrt\Delta)}$. Since~$x$ is dominant, it must be real, see end of Subsection~\ref{ssdom}. Hence~$y$ cannot be real, and the~$3$ singular moduli of discriminant~$\Delta$ are ${x,y,\bar y}$.

The maximal proper subfields of the field~$L$ are $\Q(x)$, $\Q(y)$, $\Q(\bar y)$ and~$K$. The element ${x+\alpha y}$ cannot  belong to~$\Q(x)$ or~$\Q(y)$ because ${y\notin \Q(x)}$ and ${x\notin \Q(y)}$. Thus, either ${x+\alpha y\in\Q(\bar y)}$ or ${x+\alpha y\in K}$.

The non-identical elements of the Galois group $\Gal(L/K)$ are the $3$-cyclic permutations of the set ${\{x,y,\bar y\}}$. In particular, there is ${\theta \in \Gal(L/K)}$ such that
$$
x^\theta=y, \quad y^\theta=\bar y, \quad \bar y^\theta= x.
$$
If ${x+\alpha y\in\Q(\bar y)}$ then ${y+\alpha \bar y=(x+\alpha y)^\theta \in \Q(x)\subset \R}$. Hence ${\alpha= -1}$, a contradiction.
And if ${x+\alpha y\in K}$ then ${(x+\alpha y)^{\theta^{-1}}= x+\alpha y}$. But we also have ${(x+\alpha y)^{\theta^{-1}}=\bar y+\alpha x}$. Hence
${\bar y-\alpha y\in \R}$, which implies ${\alpha=1}$, a contradiction.

\subsubsection{An upper bound for~$h$}
\label{ssslesix}
We already know that ${h\ge 4}$. Our next aim is proving that ${h\le 6}$. We are going to prove even more than this:~$\Delta$ satisfies one of the following conditions:
\begin{align}
\label{etwosubs}
&h(\Delta) \in \{4,5,6\}, && \Delta\equiv1\bmod 8 \\
\label{eonesub}
&h(\Delta)=4, && \Delta \equiv 8,12\bmod 16.
\end{align}
Thus, let us assume by contradiction that either ${h\ge 7}$ or ${h\in \{4,5,6\}}$ and none of  conditions~\eqref{etwosubs},~\eqref{eonesub} is satisfied.
Note that, since ${h\ge 4}$, we have
\begin{equation}
\label{edelge39}
|\Delta|\ge 39.
\end{equation}
%First of all, let us prove that ${\Q(x+\alpha y)= \Q(x,y)}$ when ${|\Delta|\ge 3000}$.

%Next, let us assume that ${|\Delta|\ge 3000}$ and derive a contradiction.

Let~$\sigma$ be as in~\eqref{equote}. Since~$x$ is dominant, but neither~$x^\sigma$ nor~$y$ nor~$y^\sigma$ is, we use~\eqref{eifdom},~\eqref{eifnotdom},~\eqref{equote} and~\eqref{edelge39} to obtain the lower estimate
\begin{equation}
\label{elower}
|\alpha|\ge
\frac{e^{\pi|\Delta|^{1/2}}-e^{\pi|\Delta|^{1/2}/2}-4178}{2e^{\pi|\Delta|^{1/2}/2}+4178} \ge 0.448
e^{\pi|\Delta|^{1/2}/2}.
\end{equation}

The group ${H=\Gal(L/\Q(x))}$ is a subgroup of the group ${G=\Gal(L/\Q)}$ of index ${h=[\Q(x):\Q]}$. Call ${\gamma \in G}$ \textsl{suitable} if neither~$x^\gamma$ nor~$x^{\sigma\gamma}$ is dominant or subdominant. We claim that a suitable~$\gamma$ exists unless~$\Delta$ satisfies one of  conditions~\eqref{etwosubs},~\eqref{eonesub}.

Since  there exist exactly one dominant and at most~$2$ subdominant singular moduli of discriminant~$\Delta$ (see Proposition~\ref{pdom}),   there may exist at most~$3$ cosets in  $H\backslash G$  sending~$x$  to a dominant or a subdominant element. Similarly, there exist at most~$3$ cosets in ${\sigma^{-1}H\sigma \backslash G}$ sending~$x^\sigma$  to a dominant or a subdominant conjugate.  The total cardinality of these cosets does not exceed $6|H|$. Hence a suitable~$\gamma$ exists if ${h\ge 7}$.

Using Proposition~\ref{pdom}, the same holds true if none of  conditions~\eqref{etwosubs},~\eqref{eonesub} is satisfied. Indeed, if ${\Delta\not\equiv 1\bmod8}$ then there is at most one subdominant conjugate. This means that  we have at most~$4$ ``bad'' cosets, and we find a suitable~$\gamma$ if ${h\ge 5}$.

Finally, if ${\Delta\not\equiv 1\bmod8}$ and ${\not\equiv 8,12\bmod16}$ then~$\Delta$ does not admit subdominant singular moduli at all. Hence in this case we have only~$2$ ``bad'' cosets, and we find a suitable~$\gamma$ if ${h\ge 3}$.

Thus, a suitable~$\gamma$ exists. From~\eqref{equote} we deduce that
\begin{equation}
\label{ewithgamma}
\alpha = - \frac{x^\gamma-x^{\sigma\gamma}}{y^\gamma-y^{\sigma\gamma}}.
\end{equation}
Since neither~$x^\gamma$ nor~$x^{\sigma\gamma}$ is dominant or subdominant, we may use~\eqref{eifnotdomsubdom} and~\eqref{edelge39} to estimate
\begin{equation}
\label{eupperbefore}
|\alpha|\le\frac{2e^{\pi|\Delta|^{1/2}/3}+4178}{|y^\gamma-y^{\sigma\gamma}|} \le\frac{8.04e^{\pi|\Delta|^{1/2}/3}}{|y^\gamma-y^{\sigma\gamma}|}.
\end{equation}
Theorem~\ref{thseparweak} implies that ${|y^\gamma-y^{\sigma\gamma}|\ge 800|\Delta|^{-4}}$.
Hence
%\begin{equation}
%\label{eupper}
$$
|\alpha|\le 0.0101|\Delta|^4e^{\pi|\Delta|^{1/2}/3}.
$$
%\end{equation}
Comparing this and~\eqref{elower}, we obtain
$
{e^{\pi|\Delta|^{1/2}/6}\le 0.0226|\Delta|^4}
$.
This inequality is contradictory for ${|\Delta|\ge 3000}$.

Thus, ${|\Delta|< 3000}$. We again use~\eqref{eupperbefore}, but this time we apply Proposition~\ref{psmaldis} to estimate ${|y^\gamma-y^{\sigma\gamma}|\ge 3.07}$. We obtain
$
{|\alpha| \le 2.62 e^{\pi|\Delta|^{1/2}/3}}
$.
Comparing this with~\eqref{elower}, we obtain ${|\Delta|< 12}$, a contradiction.

\subsubsection{The remaining~$\Delta$}
\label{sss38}
We are left with~$\Delta$ satisfying one of  conditions~\eqref{etwosubs},~\eqref{eonesub}. There are 38 such discriminants, their full list (found using the  \textsf{SAGE} function \textsf{cm\_orders}) being
\begin{equation}
\label{ebaddeltas}
\begin{aligned}
&\mathbf{-}39, -47, -55, -56, -63, -68, -79, \mathbf{-84}, -87, -103, \mathbf{-120}, -127, \\
&\mathbf{-132}, -135, -136, \mathbf{-168}, -175, \mathbf{-180}, -184, -196, -207, \mathbf{-228}, \\
&\mathbf{-}247, \mathbf{-280}, -292, \mathbf{-312}, -328, \mathbf{-340, -372}, -388, \mathbf{-408, -520}, \\
&\mathbf{-532}, -568, \mathbf{-708, -760}, -772, \mathbf{-1012}.
\end{aligned}
\end{equation}

Note that~16 discriminants are bold-faced. Those are of class number~$4$ and  class group of type $[2,2]$.  If~$\Delta_x$ has this property then $\Q(x)/\Q$ is a Galois extension  (see, for instance, \cite[Corollary~3.3]{ABP15}).

Let~$\Delta$ be from the list~\eqref{ebaddeltas}, and let ${x_1,\ldots, x_h}$ be the singular moduli of discriminant~$\Delta$, with ${x=x_1}$ dominant.
It follows from~\eqref{equote} that either~$\alpha$ or $-\alpha$ belongs to the set\footnote{Here and below~$j$ is used as a running index, not as the $j$-invariant.}
\begin{equation}
\label{esetofalphas}
A_\Delta=\left\{\frac{x_1-x_i}{x_j-x_k}: 2\le i,j\le h, \ j<k\le h\right\}.
\end{equation}
Using \textsf{PARI}, we can show that this set does not contain rational numbers. For those~22 discriminants which are not bold-faced, we even show that~$A_\Delta$ does not contain real numbers. To be precise, using a simple PARI script, we are able to show that
$$
\min \{|\Im z|: z\in A_\Delta\}\ge 345,
$$
for every~$\Delta$ in the list~\eqref{ebaddeltas} except for the bold-faced ones.

For the  bold-faced~$\Delta$, this argument does not work, because all their singular moduli are real. However, since in these cases $\Q(x)$ is Galois
over~$\Q$, all the singular moduli are contained in $\Q(x)$. 
 Hence we may write, in a unique way, ${x_i=f_i(x)}$, each $f_i$ being a polynomial of degree not exceeding~$3$ (recall that ${h=4}$ for all the bold-faced discriminants). It is easy to verify, using \textsf{PARI}, that the polynomials ${f_1-f_i}$ and ${f_j-f_k}$ are not proportional for every choice of $i,j,k$ as above, showing that there are no rational numbers in  $A_\Delta$.

This rules out all~$\Delta$ from~\eqref{ebaddeltas}, completing the proof of Theorem~\ref{thalpha} in the case of equal discriminants.

\subsection{Equal fundamental discriminants, but distinct discriminants}

Now assume that ${D_x=D_y}$, but ${\Delta_x\ne \Delta_y}$. In this case, as we have seen at the beginning of Section~\ref{sprim}, we have ${\{\Delta_x,\Delta_y\}=\{\Delta, 4\Delta\}}$, where ${\Delta\equiv 1\bmod 8}$. We may assume that
$$
\Delta_x=4\Delta, \qquad \Delta_y=\Delta,
$$
and that~$x$ is dominant. Since ${h(\Delta)\ge 3}$, we have
\begin{equation}
\label{edelge23}
|\Delta|\ge 23.
\end{equation}
%Let us prove that ${\Q(x+\alpha y)= \Q(x,y)}$ when ${|\Delta|\ge 300}$.
Under the assumption  %${|\Delta|\ge 300}$ but
${\Q(x,y)\ne \Q(x+\alpha y)}$,  we can find, as before, an element ${\sigma \in G=\Gal(L/\Q)}$ such that
\begin{equation}
\label{equotebis}
\alpha = - \frac{x-x^\sigma}{y-y^\sigma}. 
\end{equation}
Since~$x$ is dominant and~$x^\sigma$ is not, we use~\eqref{euniv},~\eqref{eifdom},~\eqref{eifnotdom} and~\eqref{edelge23} to obtain the estimate
\begin{align}
%\label{elowerdist}
|\alpha|&\ge \frac{e^{\pi|\Delta_x|^{1/2}}-e^{\pi|\Delta_x|^{1/2}/2}-4178}{e^{\pi|\Delta_y|^{1/2}}+e^{\pi|\Delta_y|^{1/2}/2}+4178} \nonumber \\
&=\frac{e^{2\pi|\Delta|^{1/2}}-e^{\pi|\Delta|^{1/2}}-4178}{e^{\pi|\Delta|^{1/2}}+e^{\pi|\Delta|^{1/2}/2}+4178}
\nonumber\\
\label{elowerdist}
&\ge 0.998e^{\pi|\Delta|^{1/2}}.
\end{align}
%where the latter inequality uses~\eqref{edelge23}.

Next, as in Subsection~\ref{ssslesix}, we want to find ${\gamma\in G}$ such that neither~$x^\gamma$ nor~$x^{\sigma\gamma}$ is dominant or subdominant. This time, however, the task is much easier: since ${\Delta\equiv 1\bmod 8}$, we have ${\Delta_x=4\Delta\equiv 4\bmod 32}$, and Proposition~\ref{pdom} implies that there are no subdominant singular moduli of discriminant~$\Delta_x$. Hence we only have to assure that  neither~$x^\gamma$ nor~$x^{\sigma\gamma}$ is dominant, and such~$\gamma$ exists as soon as ${[G:H]=h\ge 3}$, which is our assumption.

We again have
\begin{equation*}
\alpha = - \frac{x^\gamma-x^{\sigma\gamma}}{y^\gamma-y^{\sigma\gamma}}.
\end{equation*}
By~\eqref{eifnotdomsubdom} and~\eqref{edelge23}, we obtain
\begin{equation}
\label{euuu}
|\alpha|\le \frac{2e^{\pi|\Delta_x|^{1/2}/3}+4178}{|y^\gamma-y^{\sigma\gamma|}}= \frac{2e^{2\pi|\Delta|^{1/2}/3}+4178}{|y^\gamma-y^{\sigma\gamma|}} \le \frac{2.19e^{2\pi|\Delta|^{1/2}/3}}{|y^\gamma-y^{\sigma\gamma|}}
\end{equation}
Theorem~\ref{thseparweak} implies that ${|y^\gamma-y^{\sigma\gamma}|\ge 800|\Delta_y|^{-4}=800|\Delta|^{-4}}$.
Hence
\begin{equation*}
%\label{eupperagain}
|\alpha|\le 0.00274|\Delta|^4e^{2\pi|\Delta|^{1/2}/3}.
\end{equation*}
Comparing this with~\eqref{elowerdist}, we obtain
${e^{\pi|\Delta|^{1/2}/3}\le 0.00275|\Delta|^4}$.
This inequality implies that  ${|\Delta|< 300}$, 
 in which case ${|y^\gamma-y^{\sigma\gamma}|\ge 92.4}$ by Proposition~\ref{psmaldis}. Together with~\eqref{euuu} this implies ${|\alpha|\le 0.0238e^{2\pi|\Delta|^{1/2}/3}}$, which contradicts~\eqref{elowerdist}.
This completes the proof in the case of equal fundamental discriminants, but distinct discriminants.

\subsection{Distinct fundamental discriminants}

Now we assume that ${D_x\ne D_y}$. Since in this case we have ${\Q(x)=\Q(y)}$, we may use Corollary~4.2 of~\cite{ABP15}, where all couples of singular moduli $(x,y)$ such that ${\Q(x)=\Q(y)}$ but ${D_x\ne D_y}$ are classified. Since ${h\ge 3}$, our $\Delta_x$ and $\Delta_y$ are featured in the six bottom lines of Table~2 on page~12 of~\cite{ABP15}. To be precise,  there are 15 (up to swapping~$x$ and~$y$) possible pairs $(\Delta_x,\Delta_y)$:
\begin{equation*}
\begin{array}{llll}
(-96,-192), & (-96, -288), & (-120, -160) , & (-120, - 280), \\
(-120, -760), & (-160, -280), & (-160, - 760),  & (-180, -240), \\
(-192,-288), & (-195, -520), & (-195, - 715), & (-280, -760), \\
(-340, - 595), & (-480, - 960),  & (-520, - 715).
\end{array}
\end{equation*}
All of them can be checked using a \textsf{PARI} script in the same fashion as  the bold-faced discriminants in Subsection~\ref{sss38}.

To be precise, in all of these cases the field ${\Q(x)=\Q(y)}$ is Galois over~$\Q$. Hence the conjugates ${x_1, \ldots, x_h}$ of~$x$ and the conjugates ${y_1, \ldots, y_h}$ of~$y$ can be uniquely expressed as
${x_i=f_i(x)}$ and ${y_i=g_i(x)}$,
where~$f_i$ and~$g_i$ are polynomials over~$\Q$ of degree not exceeding ${h-1}$. Now, using \textsf{PARI}, it is easy to verify that in each case any of the polynomials ${f_1-f_i}$ is not proportional to any of the polynomials ${g_j-g_k}$. This rules out all the 15 pairs in the list above, completing the proof of Theorem~\ref{thalpha}.

\paragraph{Acknowledgments}
The authors thank Sasha Borichev, Lars Kühne and Ricardo Me\-na\-res for very useful discussions. They also thank Yulin Cai for careful reading of the manuscript and detecting a number of inaccuracies. 

We are indebted to the referee for very careful reading of the manuscript, correcting a mistake in our proof of Proposition~\ref{pbigimwcorrected}, and making many suggestions that helped us to improve the presentation.

All calculations were performed using \textsf{PARI}~\cite{pari} or \textsf{SAGE}~\cite{sagemath}.  We thank Bill Allombert and Karim Belabas for the \textsf{PARI} tutorial. Our \textsf{PARI} scripts can be viewed here: \newline
\url{https://github.com/yuribilu/Separating/blob/master/scripts.gp}.

\paragraph{Funding}
Yu. B.'s work on this article profited from attending the Valparaiso 2019 conference ``Explicit Number Theory'', which was sponsored by the Ecos Sud/Conicyt project C17E01. He was also partially supported by  the SPARC Project P445 (India). 

B.~F. was partially supported by the IRN GANDA. 

H.~Z.  was partially supported by China National Science Foundation Grant (No. 11501477), the Fundamental Research Funds for the Central Universities (No. 20720170001) and the Science Fund of Fujian Province (No. 2015J01024).

{\footnotesize

\bibliographystyle{amsplain}
\bibliography{primel}

\bigskip

\noindent
\textbf{Yuri BILU}: Institut de Mathématiques de Bordeaux, Université de Bordeaux et CNRS, 351 cours de la Libération, 33405 Talence CEDEX, France, and
School of Mathematical Sciences,
Xiamen University,
Xiamen City,
Fujian Province, P.R.China; \url{yuri@math.u-bordeaux.fr}

\noindent
\textbf{Bernadette FAYE}: Université Gaston-Berger de Saint-Louis, UFR SAT,
BP: 234,
Saint Louis, Senegal; \url{bernadette.fayee@gmail.com}

\noindent
\textbf{Huilin ZHU}:  School of Mathematical Sciences,
Xiamen University,
Xiamen City,
Fujian Province, P.R.China; \url{hlzhu@xmu.edu.cn}
(corresponding author)

}

\end{document}